\newtheorem{thm}{Theorem}[section]
\begin{document}
\title{Topological invariance of the homological index}%{Topological invariance of the homological index}

\author{Alan Carey}
\address{Mathematical Sciences Institute,
Australian National University,
Canberra 0200, Australia}
\email{alan.carey@anu.edu.au}

\author{Jens Kaad}
\address{International School of Advanced Studies (SISSA),
Via Bonomea 265,
34136 Trieste,
Italy}
\email{jenskaad@hotmail.com}

%\urladdr{www.matthiaslesch.de, www.math.uni-bonn.de/people/lesch}
%
%
%
\thanks{The first author thanks the Alexander von Humboldt Stiftung and colleagues at the University of M\"unster and acknowledges the support of the Australian Research Council. The second author was partly supported by the Fondation Sciences Math\'ematiques de Paris (FSMP) and by a public grant overseen by the French National Research Agency (ANR) as part of the ``Investissements d'Avenir'' program (reference: ANR-10-LABX-0098). Both authors are very appreciative of the support offered by the Erwin Schr\"odinger Institute where much of this research was carried out. We are also grateful for the advice of Joachim Cuntz, Harald Grosse and Fritz Gesztesy while this investigation was proceeding.}
%\thanks{2010 \emph{Mathematical Subject Classification}: Primary: 58B32, %Secondary: 58B34, 33D80, 19D55, 81R50.}
%\thanks{\emph{Keywords and phrases}: Modular spectral triple, Chern %characters, reduced twisted cyclic theory, twisted index pairing, derived %$L^p$-spaces, quantum SU(2).\\}
%\thanks{The first author is supported by the Fondation Sciences Mat\'ematiques de Paris.}
%\thanks{The second author is partially supported by the Danish National Research Foundation (DNRF) through the Centre for Symmetry and Deformation.}
\subjclass[2010]{19K56; 46L80, 47A55}
\keywords{Index theory, cyclic theory, perturbations} %Commuting operators, Holomorphic functional calculus, Localization, Multiplicity.}

\begin{abstract}%{{{
R. W. Carey and J. Pincus in \cite{CaPi:IOG} proposed an index theory for non-Fredholm bounded operators $T$ on a separable Hilbert space $\C H$ such that $TT^*-T^*T$ is in the trace class.
We showed in  \cite{CaGrKa:AOS} using Dirac-type operators acting on sections of bundles over $\mathbb R^{2n}$ that we could construct bounded operators $T$ satisfying the more general condition
that $(1-TT^*)^{n}- (1-T^*T)^{n}$ is trace class. We proposed there a `homological index' for these Dirac-type operators given by
${\rm Tr}((1-TT^*)^{n}- (1-T^*T)^{n})$. In this paper we show that the index introduced  in \cite{CaGrKa:AOS}  represents the result of a paring between a cyclic homology theory for the algebra generated by $T$ and $T^*$ and its dual cohomology theory. This leads us to establish the homotopy invariance of our homological index (in the sense of cyclic theory).
We are then able to define in a very general fashion a homological index for certain unbounded operators and prove invariance of this index under a class of unbounded perturbations.
\end{abstract}%}}}

\maketitle
\tableofcontents
%\listoffigures
\section{Introduction}

\subsection{Background}
An `index
theory' for non-Fredholm operators was commenced  some time ago by R. W. Carey and J. Pincus in \cite{CaPi:IOG}, and F. Gesztesy and B. Simon in \cite{GeSi:TIW}.
In both of these papers the index is expressed in terms of the Krein spectral shift function from scattering theory. In the former paper the starting point is an operator $T$ on a separable Hilbert space
$\mathcal H$ with the property that the commutator $TT^*-T^*T$ is trace class.
In the latter paper the problem is stated for unbounded operators motivated by examples in \cite{BGGSS:WKS}.
The passage from the unbounded picture to the bounded one is straightforward and is explained below (see also our
  companion paper \cite{CaGrKa:AOS} and \cite{CGPST:WSS}).

The main point of the companion paper \cite{CaGrKa:AOS} was to demonstrate the existence of a class of non-trivial examples to which the general framework described here applies. These examples are Euclidean Dirac type operators on $\mathbb R^{2n}$. They illustrate the appropriate generalisation of the Carey-Pincus framework
to the case where one replaces the trace ideal by other Schatten ideals. 

The primary purpose of the discussion below is to explain, for the bounded picture, a homological formulation of an index theory for non-Fredholm operators where we  impose a modification of the  Carey-Pincus trace class commutator condition. This also entails a discussion of the invariance properties of our
homological index.  We then provide conditions on perturbations under which the homological index for unbounded operators
is unchanged. There is a relationship to the perturbation invariance result that appears in  \cite{BGGSS:WKS}.

Our conditions apply to the examples in \cite{CaGrKa:AOS}. There we observed that  the trace class commutator condition of Carey-Pincus is  relevant to low dimensional manifolds but does not apply in higher dimensions. There is more than one way to generalise the 
Carey-Pincus theory. We believe that the homological development that we provide here is natural
from the point of view of the  examples in higher dimensions described in \cite{CaGrKa:AOS}. 

The index studied in \cite{CaPi:IOG}, and \cite{GeSi:TIW} is not invariant under compact perturbations and hence has no relationship to K-theory.  In this paper we show how to use cyclic homology as a substitute for K-theory in the sense of expressing the numerical  index as the outcome of a pairing of cohomology and homology theories.  Specifically, we define our  `homological index' as a functional on homology groups of a bicomplex for the algebra generated by $T$ and $T^*$.
This bicomplex uses a relative homology construction and  is adapted from the usual $(b,B)$ complex of  cyclic theory
\cite{Lod:CH}. 

Our main theorem  establishes the homotopy
invariance, in the sense of  cyclic homology, of our homological index. This enables us to then understand which perturbations  of the Dirac type
operators in \cite{CaGrKa:AOS}  leave the index introduced there invariant.
\subsection{Outline of our approach}
Our generalisation of the Carey-Pincus work 
begins with a bounded operator $T$ on $\mathcal H$
such that 
\begin{equation}\label{condition}
(1-T^*T)^n-(1-TT^*)^n
\end{equation}
is in the trace class. 
For $n=1$ this condition reduces to the trace class commutator condition.  For $n>1$  we have:

\begin{lemma}{\label{Suklemma}}
If $(1-TT^*)^{n}- (1-T^*T)^{n}$ is trace class then $T$ and $T^*$ commute modulo the Schatten class $\sL^n(\C H)$.
\end{lemma}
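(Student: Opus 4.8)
The plan is to reformulate the assertion as an operator-H\"older estimate and then invoke a known inequality. Set $A := 1 - TT^{*}$ and $B := 1 - T^{*}T$. Since $\|T\|\le 1$, the operators $A$ and $B$ are \emph{positive}; this positivity is genuinely needed, as the conclusion can fail for operators of norm $>1$. Observe that $TT^{*} - T^{*}T = B - A$, so the conclusion that $T$ and $T^{*}$ commute modulo $\sL^{n}(\C H)$ is exactly the statement $A - B \in \sL^{n}(\C H)$, while the hypothesis says precisely that $A^{n} - B^{n} = (1-TT^{*})^{n} - (1-T^{*}T)^{n}$ lies in $\sL^{1}(\C H)$.

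Using positivity I would write $A = (A^{n})^{1/n}$ and $B = (B^{n})^{1/n}$, reducing the lemma to the implication: for positive bounded operators $X$ and $Y$,
\[
X - Y \in \sL^{1}(\C H) \ \Longrightarrow\ X^{1/n} - Y^{1/n} \in \sL^{n}(\C H) ,
\]
applied with $X = A^{n}$ and $Y = B^{n}$. This is an operator-H\"older estimate for the function $t \mapsto t^{1/n}$, which is H\"older continuous of order $1/n$ on $[0,\infty)$: by the Birman--Koplienko--Solomyak inequality (in the form, due to Potapov--Sukochev, that reaches the endpoint $\sL^{1}$) one has, for all positive $X$ and $Y$,
\[
\big\| X^{1/n} - Y^{1/n} \big\|_{\sL^{n}(\C H)} \le \| X - Y \|_{\sL^{1}(\C H)}^{1/n} .
\]
Feeding in $X = A^{n}$ and $Y = B^{n}$ then gives
\[
\big\| TT^{*} - T^{*}T \big\|_{\sL^{n}(\C H)} = \| A - B \|_{\sL^{n}(\C H)} \le \big\| (1-TT^{*})^{n} - (1-T^{*}T)^{n} \big\|_{\sL^{1}(\C H)}^{1/n} < \infty ,
\]
which is the assertion.

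The only real content is the operator-H\"older inequality; everything else is bookkeeping that identifies $A$ and $B$ as positive and the hypothesis as an $\sL^{1}$-statement about their $n$-th powers. For $n = 1$ there is nothing to prove, and for $n \ge 2$ the classical Birman--Koplienko--Solomyak theorem already delivers the estimate with $\sL^{1}$ on the right, so no delicate endpoint issue actually arises. In the smallest cases the quotation can even be bypassed: for $n = 2$, with $C := A - B$, adding the identities $A^{2} - B^{2} = AC + CA - C^{2}$ and $A^{2} - B^{2} = BC + CB + C^{2}$ gives $(A+B)C + C(A+B) = 2(A^{2}-B^{2}) \in \sL^{1}(\C H)$; if moreover $T$ is a strict contraction then $A + B = 2 - TT^{*} - T^{*}T$ is bounded below, and solving the Lyapunov equation $(A+B)C + C(A+B) = D$ by $C = \int_{0}^{\infty} e^{-s(A+B)} D \, e^{-s(A+B)}\, ds$ even yields $C \in \sL^{1}(\C H) \subseteq \sL^{2}(\C H)$. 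This elementary argument becomes unwieldy for larger $n$, which is exactly why the operator-H\"older inequality is the efficient tool, and it also covers the boundary case $\|T\| = 1$ uniformly.
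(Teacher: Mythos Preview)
Your route is the paper's: its entire proof is the single citation of \cite[Theorem~16]{PoSu:DIS}, which is precisely the Potapov--Sukochev endpoint form of the Birman--Koplienko--Solomyak inequality you invoke, applied after writing $A=1-TT^{*}$ and $B=1-T^{*}T$.

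One point is worth flagging. You open by asserting $\|T\|\le 1$, but the lemma as stated carries no such hypothesis. Your instinct that positivity of $A$ and $B$ is essential is in fact correct, and without it the conclusion can fail: the unilateral weighted shift $T e_{k}=w_{k}e_{k+1}$ with $|w_{k}|^{2}=1+(-1)^{k}k^{-1/2}$ (so $\|T\|>1$) has $(1-TT^{*})^{2}-(1-T^{*}T)^{2}\in\sL^{1}(\C H)$, since its diagonal entries are eventually $1/k-1/(k+1)$, while $|w_{k}|^{2}-|w_{k+1}|^{2}=(-1)^{k}\big(k^{-1/2}+(k+1)^{-1/2}\big)$ is not square-summable, so $[T,T^{*}]\notin\sL^{2}(\C H)$. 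In every application in the paper $T$ arises as the bounded transform $\C D^{+}(1+\C D^{-}\C D^{+})^{-1/2}$ of a closed operator and is therefore automatically a contraction, so the needed hypothesis is tacitly in force; but as a proof of the lemma exactly as written, both your argument and the paper's cover only the contractive case, and the unrestricted statement is false.
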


\begin{proof}
This result is a Corollary of \cite[Theorem 16]{PoSu:DIS}.
 \end{proof}

The converse to Lemma \ref{Suklemma} appears to require additional side conditions.

Our next step is to introduce  certain homology groups  of  the *-algebra generated by $T$. 
We present this homological approach in a more abstract framework where we are given an algebra $\mathcal A$ with two ideals $\mathcal I$ and $\mathcal J$ with $\mathcal J\subset\mathcal I$. Later $\mathcal J$ is chosen to be  the ideal generated by $(1-T^*T)^n- (1-TT^*)^n$ while $\mathcal I$ is the smallest ideal containing $(1-T^*T)^n$ and $(1-TT^*)^n$.
Our innovation in this paper is to introduce a bicomplex for the algebra $\mathcal A$ by using the ideals $\mathcal I$ and $\mathcal J$. The homology theory of our bicomplex has a dual cohomology theory
and the pairing between the two,  in the concrete situation of operators on Hilbert space, produces the real number ${\rm{Tr}}((1-T^*T)^n-(1-TT^*)^n)$
 that we call the homological index. We then establish the homotopy invariance of this pairing in the sense of  cyclic homology.

The second part of the paper applies this theory to some examples of unbounded  operators that can be mapped to a pair $T, T^*$ satisfying a Schatten class condition as above. 
These examples  are of two types. The first is motivated by \cite{CGPST:WSS} where one starts from a spectral flow problem, and then using the usual doubling trick 
constructs a $\mathbb Z_2$ graded space and an associated `index problem'. This index problem can be mapped to the bounded picture producing a pair $T,T^*$ as above.

A second source of examples is studied in \cite{CaGrKa:AOS}. They arise from taking a Dirac operator acting on $L^2$-sections of the spin bundle over $\rr^{2n}$ and coupling it to a connection. If we write this operator as $\mathcal D$ 
then we pass to the bounded picture using the map $\mathcal D\to \mathcal D(1+\mathcal D^2)^{-1/2}$. Using the natural grading on the $L^2$ sections afforded by the Clifford algebra we can construct
a pair of operators $T, T^*$ that satisfy the Schatten class condition above. 

By scaling $\mathcal D$, that is, replacing it by $\mu^{-1/2} \mathcal D$ we obtain a one parameter family of pairs $T_\mu, T^*_\mu$. In \cite{CaGrKa:AOS} we studied the scaling limit as $\mu\to \infty$
of the homological index
$${\rm{Tr}}((1-T_\mu^*T_\mu)^n-(1-T_\mu T_\mu^*)^n).$$
This limit is referred to as the `anomaly' in the mathematical physics literature. On the other hand we remark that it is the scaling limit as $\mu\to 0$ that is studied in \cite{GeSi:TIW} and which motivated much of the subsequent work.\footnote{This limit is termed the Witten index in \cite{GeSi:TIW}
after \cite{Wi:CSB}. It was Gesztesy-Simon who discovered the connection between Witten's ideas and
the spectral shift function and hence to Carey-Pincus. There is an extensive literature on the Witten index and supersymmetric quantum mechanics. As we do not pursue these ideas in this paper we refer to 
\cite{CGPST:WSS} for more detail on this history.}

\subsection{The general formalism for unbounded operators}\label{ss:forunbope}
%
%In noncommutative geometry, from the spectral point of view, we start with an unbounded selfadjoint operator $\C D$ densely defined on a Hilbert space $\C H$. Then we introduce a subalgebra of the algebra of bounded operators $\sL(\C H)$ on $\C H$ and probe the structure of this algebra using Kasparov theory (computing in particular the topological index as a pairing in $K$-theory,
%\cite{CoMo:LIF}).
%
%The homological formalism developed in our companion paper suggests a variant on the conventional approach via spectral triples. This variant is also suggested by the concrete picture developed in the early papers of Boll\'e et al, \cite{BGGSS:WKS}, and of Gesztesy and Simon, \cite{GeSi:TIW}. 
%Loosely speaking, whereas the topological interpretation of the Fredholm index
%is given by K-theory we view cyclic homology as the appropriate tool to replace K-theory in the case of non-Fredholm operators.
%The main results presented here are about finding non-trivial examples of our homological index. 
%These examples arise from the study of  Dirac type operators on the manifold $\mathbb R^{2n}$. 
%
%Our eventual aim is to probe the meaning of the homological index from the spectral point of view for Dirac type operators
%on non-compact manifolds, This present paper should be seen as a stepping stone in that direction.

%To explain our results we need some notation which we now% present.

%\subsubsection{Unbounded operators}\label{s:unb}
The unbounded operators we consider in \cite{CaGrKa:AOS} and which arise in other contexts such as  \cite{CGPST:WSS} % and \cite{Cal:AIO} 
have the following structure.

First we double our Hilbert space setting $\C H^{(2)} := \C H \op \C H$. We let $\C D^+$ be a closed densely defined operator on $\C H$ and form the odd selfadjoint operator $\C D := \ma{cc}{0 & \C D^- \\ \C D^+ & 0}$, where $\C D^- = (\C D^+)^*$. We will study a class of perturbations of $\C D$ of the form
\[
\C D_A := \ma{cc}{
0 & \C D^- + A^- \\
\C D^+ + A^+ & 0
}
\]
where $A^+$ is an unbounded closed operator on $\C H$ (generally satisfying some side conditions so that the following manipulations are valid) and $A^- := (A^+)^*$. The connection to the homological index is via the mapping to bounded operators using the Riesz map
\[
\C D^+ + A^+ \mapsto T^+ = (\C D^+ + A^+)\big(1 + (\C D^-+ A^-)(\C D^+ + A^+) \big)^{-1/2}
\]
These bounded operators generate an algebra to which our homological theory applies. To see how this arises we note the identities
\[
\C D_A^2 = \ma{cc}{
(\C D^- + A^-)(\C D^+ + A^+) & 0 \\
0 & (\C D^+ + A^+)(\C D^- + A^-)
}
\]
and
\begin{equation}\label{eq:boures}
\begin{split}
& 1 - T^+ (T^+)^* = ( 1 + (\C D^++A^+) (\C D^-+A^-) )^{-1} \\
& 1 - (T^+)^* T^+ = ( 1 + (\C D^- +A^-)(\C D^+ +A^+))^{-1}
\end{split}
\end{equation}

We show in \cite{CaGrKa:AOS} that there is a natural class of Dirac-type operators on $\rr^{2n}$, $n \in \nn$ which fit the above framework and satisfy the extra condition:
\begin{equation}\label{eq:summ}
\begin{split}
(1 - (T^+)^* T^+)^n - (1 - T^+ (T^+)^*)^n \in \sL^1(\C H)
\end{split}
\end{equation}
where $\sL^1(\C H) \su \sL(\C H)$ denotes the ideal of trace class operators. The connection between the dimension of the underlying space $\rr^{2n}$ and the condition \eqref{eq:summ} 
is not evident in the earlier work \cite{BGGSS:WKS} but is natural from the point of view of spectral
and noncommutative geometry. 
\footnote{We remark however that, whereas the notion of spectral triple is prominent in noncommutative geometry, we do not have this additional structure in our approach. Rather, as in \cite{CaPi:IOG}, it is the C$^*$-algebra generated by $T^+$ that is being investigated by spectral methods here.
The reader may recall from \cite[Introduction to Chapter III]{Con:NCG} that the early work of Carey-Pincus provided inspiration for the initial development of noncommutative geometry. This paper shows that the later work of
\cite{CaPi:IOG} points to a new direction that exploits other tools from noncommutative geometry.}

\subsection{The main results and outline of the paper}

The paper begins, in Section \ref{s:precycthe}, with an outline of the relevant notions from cyclic theory and an explanation of the construction of
the bicomplex needed for the homological index. In Section \ref{s:seqcyc} we show how the operators of interest in the definition of the homological index form part of a 2-cycle in this bicomplex.
Then in Section \ref{s:tracoc} we show that the homological index coincides with the numerical pairing
of our homology theory with its dual.

Section \ref{s:invarpro} contains our topological invariance results in the form of theorems \ref{t:hominv} and \ref{t:hominvII}. The latter in particular investigates when a pair $T_0^+,T_1^+$ that are joined by a norm differentiable path $\{T_t^+:t\in [0,1]\}$ of operators have the same homological index. We formulate the conditions in terms of trace norm continuity properties of certain functions of $T_t^+$ and $(T_t^+)^*$ and their derivatives.

The interesting examples of the homological index arise from differential operators. 
So we begin with unbounded operators $\C D^+$ and the auxiliary structure described in the previous Subsection.
In Section \ref{s:homindunb} we define the homological index of these unbounded operators. In Section \ref{s:exiunbhom} we work in the framework of Subsection \ref{ss:forunbope} and show, under certain constraints on the perturbation $A^+$, that the homological index exists for $\C D^+$ exists if and only if it exists for the perturbation $\C D^++A^+$. Finally in Section \ref{s:invunbhom} we establish conditions under which the homological indices of $\C D^+$  and  $\C D^++A^+$ are the same. In the first Appendix we sketch how the Theorem of Section \ref{s:invunbhom} applies to the Dirac type operators introduced in \cite{CaGrKa:AOS}. In the second Appendix we have gathered some general results on perturbations of unbounded operators.

\section{Preliminaries on cyclic theory}\label{s:precycthe}
In this Section we collect the concepts from cyclic theory needed for the paper.
Throughout this Section $\C A$ will be a unital algebra over the complex numbers. Our general references for cyclic theory are the books of Connes and Loday, \cite[Chapter III]{Con:NCG} and \cite{Lod:CH}.

\subsection{Cyclic homology}
For each $k \in \nn \cup \{0\}$, let $C_k(\C A) := \C A^{\ot (k+1)}$. Define the Hochschild boundary $b : C_k(\C A) \to C_{k-1}(\C A)$ by
\[
b : x_0 \olo x_k \mapsto 
\sum_{i=0}^{k-1} (-1)^k x_0 \olo x_i x_{i+1} \olo x_k + (-1)^k x_k x_0 \ot x_1 \olo x_{k-1}
\]
where $C_{-1}(\C A) := \{0\}$. Define the cyclic operator $t : C_k(\C A) \to C_k(\C A)$ by
\[
t : x_0 \olo x_k \mapsto (-1)^k x_k \ot x_0 \olo x_{k-1}
\]
The norm operator $N : C_k(\C A) \to C_k(\C A)$ is then $N := 1 + t \plp t^k$. The extra degeneracy $s : C_k(\C A) \to C_{k+1}(\C A)$ is defined by
\[
s : x_0 \olo x_k \mapsto 1 \ot x_0 \olo x_k.
\]
The Connes boundary $B : C_k(\C A) \to C_{k+1}(\C A)$ is then given by $B := (1-t)sN$.

The Connes-bicomplex $\C B_{**}(\C A)$ is defined by the diagram
\[
\begin{CD}
@V{b}VV  @V{b}VV @V{b}VV \\
C_2(\C A) @<<{B}< C_1(\C A) @<<{B}< C_0(\C A) \\ %>>> %>>>
@V{b}VV @V{b}VV \\
C_1(\C A) @<<{B}< C_0(\C A) \\ %>>>
@V{b}VV \\
C_0(\C A)
\end{CD}
\]
The cyclic homology $HC_*(\C A)$ of $\C A$ is defined as the homology of the totalization of the Connes-bicomplex. The chains are thus given by
\[
\T{Tot}_{2k}(\C B(\C A)) = \bop_{m = 0}^k C_{2m}(\C A) \q \T{and} \q
\T{Tot}_{2k+1}(\C B(\C A)) = \bop_{m = 0}^k C_{2m+1}(\C A)
\]
in even and odd degrees respectively. The boundary map is given by
\[
(b + B) : \sum_{m=0}^k \xi_{2m} e_{2m} \mapsto 
\sum_{m=0}^{k-1}\big( b(\xi_{2m + 2}) + B(\xi_{2m}) \big) e_{2m+1}
\]
on even chains. And by a similar formula on odd chains. %The notation $\xi_{2m} e_{2m}$ indicates the vector in $\T{Tot}_{2k}(\C B(\C A))$ which has $\xi_{2m} \in C_{2m}(\C A)$ in position $m$ and zeroes elsewhere.

\subsection{Cyclic cohomology}
The cyclic cohomology of $\C A$ is obtained by applying the contravariant functor $\T{Hom}_{\cc}(\cd,\cc)$ to the homological constructions in the last subsection. More explicitly, for each $k \in \nn \cup \{0\}$, let $C^k(\C A) := \T{Hom}_{\cc}\big(C_k(\C A),\cc \big)$. The Hochschild coboundary and the Connes coboundary are then defined by
\[
\begin{split}
& b : C^k(\C A) \to C^{k+1}(\C A) \q b : \rho \mapsto \rho \ci b \q \T{and} \\
& B : C^k(\C A) \to C^{k-1}(\C A) \q B : \rho \mapsto \rho \ci B
\end{split}
\]
respectively. This gives rise to the Connes-bicomplex $B^{**}(\C A)$ defined by the diagram
\[
\begin{CD}
@A{b}AA  @A{b}AA @A{b}AA \\
C^2(\C A) @>>{B}> C^1(\C A) @>>{B}> C^0(\C A) \\
@A{b}AA @A{b}AA \\
C^1(\C A) @>>{B}> C^0(\C A) \\
@A{b}AA \\
C^0(\C A)
\end{CD}
\]
The cyclic cohomology $HC^*(\C A)$ is the cohomology of the totalization of the bicomplex $\C B^{**}(\C A)$.

\subsection{Relative cyclic homology}\label{s:relcychom}
Let $\C I \su \C A$ be an ideal in $\C A$. The quotient map $q : \C A \to \C A/\C I$ induces a surjective map of bicomplexes
\[
q : \C B_{**}(\C A) \to \C B_{**}(\C A/\C I).
\]
The relative cyclic bicomplex is defined as the kernel of the chain map $q$. This bicomplex is denoted by $\C B_{**}(\C A,\C I)$. The homology of its totalization is the relative cyclic homology $HC_*(\C A,\C I)$. By fundamental homological algebra there is an associated long exact sequence
\[
\begin{CD}
\cdots \, \, HC_{*+1}(\C A/\C I) @>{\pa}>> HC_*(\C A,\C I) @>{i}>> 
HC_*(\C A) @>{q}>> HC_*(\C A/\C I) @>{\pa}>> \ldots
\end{CD}
\]

The bicomplex $\C B_{**}(\C A,\C I)$ can be described explicitly: for each $k \in \nn \cup \{0\}$, define the subspace
\[
C_k(\C A,\C I) := \C I \ot \C A \olo \C A \plp \C A \olo \C A \ot \C I,
\]
where the algebra $\C A$ appears precisely $k$ times. We remark that the Hochschild boundary and the Connes boundary restrict to boundary maps
\[
b : C_k(\C A,\C I) \to C_{k-1}(\C A,\C I) \q \T{and} \q 
B : C_k(\C A,\C I) \to C_{k+1}(\C A,\C I).
\]
The bicomplex $\C B_{**}(\C A,\C I)$ is then given by
\[
\begin{CD}
@V{b}VV  @V{b}VV @V{b}VV \\
C_2(\C A,\C I) @<<{B}< C_1(\C A,\C I) @<<{B}< C_0(\C A,\C I) \\ %>>> %>>>
@V{b}VV @V{b}VV \\
C_1(\C A,\C I) @<<{B}< C_0(\C A,\C I) \\ %>>>
@V{b}VV \\
C_0(\C A,\C I)
\end{CD}
\]

\subsection{Relative cyclic cohomology}
Let $\C I \su \C A$ be an ideal in $\C A$. The description of the cyclic homology of the quotient algebra $\C A/\C I$ by a mapping cone construction can also be dualized by applying the contravariant functor $\T{Hom}_\cc(\cd,\cc)$. This yields a convenient description of the cyclic cohomology of the quotient $\C A/\C I$ which we will apply in the subsequent sections.

For each $k \in \nn \cup \{0\}$, let $C^k(\C A,\C I) := \T{Hom}_{\cc}\big(C_k(\C A,\C I),\cc\big)$. The relative cyclic cohomology 
of the pair $\C I \su \C A$ is obtained as the cohomology of the totalization of the bicomplex $\C B^{**}(\C A,\C I)$ defined by the diagram
\[
\begin{CD}
@A{b}AA  @A{b}AA @A{b}AA \\
C^2(\C A,\C I) @>>{B}> C^1(\C A,\C I) @>>{B}> C^0(\C A,\C I) \\
@A{b}AA @A{b}AA \\
C^1(\C A,\C I) @>>{B}> C^0(\C A,\C I) \\
@A{b}AA \\
C^0(\C A,\C I).
\end{CD}
\]

\subsection{The glued complex}
Let $X$ and $Y$ be first quadrant bicomplexes. The vertical and horizontal boundaries are denoted by $d^v_X, d^h_X$ and $d^v_Y, d^h_Y$ respectively. We will use the convention that $d^v_X d^h_X + d^h_X d^v_X = 0$ and similarly that $d^v_Y d^h_Y + d^h_Y d^v_Y = 0$. Let $X_{*0}$ and $Y_{*0}$ denote the $0^{\T{th}}$ column of $X$ and $Y$ respectively. {\color{black} We remark} that both $X_{*0}$ and $Y_{*0}$ are chain complexes when equipped with their vertical boundaries. Suppose that we have a chain map $\al : X_{*0} \to Y_{*0}$ of degree $1$. This means that $\al : X_{k,0} \to Y_{k+1,0}$ satisfies the relation $d^Y \al + \al d^X = 0$.

\begin{dfn}
The \emph{concatenation} of $X$ and $Y$ along $\al$ is the bicomplex $Y \coprod_\al X$ which in degree $(k,m)$ is given by
\[
(Y \coprod_\al X)_{k,m} := \fork{ccc}{
Y_{k,0} & \T{for} & m = 0 \\
X_{(k-1),(m-1)} & \T{for} & m \in \{1,2,\ldots\}.
}
\]
The horizontal boundary is given by $d^h(\al) : (Y \coprod_\al X)_{k,m} \to (Y \coprod_\al X)_{k,(m-1)}$,
\[
d^h(\al) := \fork{ccc}{
\al & \T{for} & m = 1 \\
d^h_X & \T{for} & m \geq 2
}.
\]
The vertical boundary is given by $d^v(\al) : (Y \coprod_\al X)_{k,m} \to (Y \coprod_\al X)_{(k-1),m}$,
\[
d^v(\al) := \fork{ccc}{
d^v_Y & \T{for} & m = 0 \\
d^v_X & \T{for} & m \geq 1
}.
\]
\end{dfn}

\subsection{The two ideal case}
Let $\C J \su \C I \su \C A$ be two ideals in the unital $\cc$-algebra $\C A$. We then have the two first quadrant bicomplexes $\C B_{**}(\C I,\C A)$ and $\C B_{**}(\C J,\C A)$. 

The $0^{\T{th}}$ column of $\C B_{**}(\C I,\C A)$ is given by $\C B_{*0}(\C I,\C A) = C_*(\C I,\C A)$. Thus, in degree $k \in \nn \cup \{0\}$ we have the chains $C_k(\C I,\C A)$, where the boundary map $b : C_k(\C I,\C A) \to C_{k-1}(\C I,\C A)$ is induced by the Hochschild boundary operator.

The $0^{\T{th}}$ column of $\C B_{**}(\C J,\C A)$ is related to the $0^{\T{th}}$ column of $\C B_{**}(\C I,\C A)$ by the Connes boundary $B : C_*(\C J,\C A) \to C_*(\C I,\C A)$, which is a chain map of degree $1$.

The concatenation of $\C B_{**}(\C J,\C A)$ and $\C B_{**}(\C I,\C A)$ along $B$ can be represented by the diagram
\[
\begin{CD}
@V{b}VV  @V{b}VV @V{b}VV \\
C_2(\C A,\C I) @<<{B}< C_1(\C A,\C J) @<<{B}< C_0(\C A,\C J) \\
@V{b}VV @V{b}VV \\
C_1(\C A,\C I) @<<{B}< C_0(\C A,\C J) \\
@V{b}VV \\
C_0(\C A,\C I)
\end{CD}
\]
We will use the notation $\C B_{**}(\C I,\C J,\C A) := \C B_{**}(\C I,\C A) \coprod_B \C B_{**}(\C J,\C A)$ for this bicomplex.

\begin{dfn}\label{S-op}
The totalization of the concatenation we write as $\C B_*(\C I,\C J,\C A)$ or $ \T{Tot}_*\big(\C B(\C I,\C J,\C A)\big)$ and it comes equipped with a periodicity operator, denoted 
$$S : \C B_*(\C I,\C J,\C A) \to \C B_{*-2}(\C J,\C A)$$ and defined by $S(x_n,x_{n-2},\ldots) = (x_{n-2},x_{n-4},\ldots)$.
\end{dfn}

%The inclusion provides a chain map $i : \C B(\C I,\C J,\C A) \to \C B(\C A)$ of degree $0$. We are interested in the associated mapping cone complex $MC(i)$.
%
%To be explicit, a $3$-cycle in the mapping cone complex consists of a tuple $(x^3,x^1,y^2,z^0)$, where $x^j \in C_j(\C A)$, $j=1,3$, $y^2 \in C_2(\C I,\C A)$ and $z^0 \in \C J$, such that 
%\[
%bx^3 + Bx^1 + i(y^2) = 0 \q bx^1 + i(z^0) = 0 \q -b(y^2) -B(z^0) = 0.
%\]

\section{A sequence of cycles}\label{s:seqcyc}
Let $\C A$ denote the free unital algebra over $\cc$ on two generators $x,y\in \C A$. The elements $v := 1 -xy$ and $w := 1 - yx$ in $\C A$ will play a special role. Let $n \in \nn$ be fixed. Let $\C I_n \su \C A$ denote the smallest ideal which contains both $v^n$ and $w^n$ and let $\C J_n \su \C A$ denote the ideal generated by the element $w^n - v^n$.

Define the element $\om_n := x + vx \plp v^{n-1}x$.

\begin{lemma}\label{l:baside}
\[
y \cd \om_n = 1 - w^n \q \om_n \cd y = 1 - v^n \q v \cd \om_n = \om_n \cd w
\]
\end{lemma}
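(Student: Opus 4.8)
The plan is to verify each of the three identities by a direct telescoping computation in the free algebra $\C A$, treating $x$ and $y$ as noncommuting indeterminates and freely using only the definitions $v = 1-xy$, $w = 1-yx$, and $\om_n = x + vx + \cdots + v^{n-1}x = \sum_{k=0}^{n-1} v^k x$.

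For the first identity, I would start from the single-term relation $y \cd (v^k x) = y v^k x$ and use the commutation rule $y v = y(1-xy) = (1-yx)y = w y$, which holds already at $k=1$. Iterating gives $y v^k = w^k y$, so $y \cd (v^k x) = w^k (yx) = w^k(1 - w) = w^k - w^{k+1}$. Summing over $k = 0, \dots, n-1$ telescopes to $1 - w^n$, giving $y \cd \om_n = 1 - w^n$. For the second identity, $\om_n \cd y = \sum_{k=0}^{n-1} v^k (xy) = \sum_{k=0}^{n-1} v^k (1-v) = \sum_{k=0}^{n-1}(v^k - v^{k+1}) = 1 - v^n$, which is an even simpler telescoping sum that does not even need the commutation rule. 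For the third identity, I would again use $y v = w y$, hence $x y v^k = (1-v) v^k$ rearranged, but more directly: $v \cd \om_n = \sum_{k=0}^{n-1} v^{k+1} x = \sum_{j=1}^{n} v^j x$ while $\om_n \cd w = \sum_{k=0}^{n-1} v^k x w$; so it suffices to show $x w = v x$, i.e. $x(1-yx) = (1-xy)x$, which is immediate. Then $v^k x w = v^k (v x) = v^{k+1} x$, and summing gives $\om_n \cd w = \sum_{k=0}^{n-1} v^{k+1} x = v \cd \om_n$.

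Collecting the needed auxiliary facts cleanly at the outset would streamline the writeup: (i) $x w = v x$ (equivalently $x(yx) = (xy)x$, trivial), and (ii) $y v = w y$ (same triviality on the other side), from which $y v^k = w^k y$ follows by an immediate induction. The three displayed identities then drop out as the three telescoping sums above.

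I do not anticipate a genuine obstacle here — everything is a formal manipulation in a free algebra and the only mild subtlety is bookkeeping the index shifts in the telescoping sums and being careful that $yv = wy$ (rather than $vy = yw$, which is false in general) is the correct commutation relation. The identity $\om_n \cd w = v \cd \om_n$ is the one most worth double-checking, since it is the statement that $\om_n$ intertwines right multiplication by $w$ with left multiplication by $v$, and it is this relation that presumably makes $\om_n$ useful for building the cycle in the bicomplex $\C B_{**}(\C I_n, \C J_n, \C A)$ in the next section.
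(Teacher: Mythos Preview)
Your proof is correct and follows essentially the same approach as the paper: both arguments isolate the elementary commutation relations $yv = wy$ and $xw = vx$, then reduce each of the three identities to a telescoping sum $(1 + u + \cdots + u^{n-1})(1-u) = 1 - u^n$ with $u = w$ or $u = v$. The only difference is notational --- you write the sums with explicit indices while the paper compresses them --- so there is nothing further to compare.
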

\begin{proof}
Remark first that $y \cd v = w \cd y$. This implies that 
\[
y \cd \om_n = (1 + w \plp w^{n-1})yx = (1 + w \plp w^{n-1})(1 - w) = 1 - w^n 
\]
Similarly,
\[
\om_n \cd y = (1 + v \plp v^{n-1})xy = (1 + v \plp v^{n-1})(1 - v) = 1 - v^n
\]
This proves the first two identities.
The last identity follows from the computation
\[
v \cd \om_n = v x + v^2 x \plp v^n x = x w + v x w \plp v^{n-1} x w = \om_n \cd w
\]
\end{proof}

\begin{dfn}\label{chains}
We define the chains $\gamma_2$ and $\gamma_0$ by
\[
\begin{split}
\ga_2 & := -w^n \ot y \ot \om_n + v^n \ot \om_n \ot y + \om_n \ot w^n \ot y + \om_n \ot y \ot v^n \\ 
& \q - 2 \cd (w^n - v^n) \ot 1 \ot 1 \in C_2(\C I_n,\C A) \\
\ga_0 & := w^n - v^n \in \C J_n
\end{split}
\]
\end{dfn}

\begin{prop}\label{p:twococ}
The $2$-chain $\ga:= (\ga_2,\ga_0)$ is a $2$-cycle in the concatenation $\C B_*(\C I_n,\C J_n,\C A)$. 
\end{prop}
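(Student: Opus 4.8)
The plan is to unwind what it means for $(\ga_2,\ga_0)$ to be a cycle in the totalization of the concatenation $\C B_{**}(\C I_n,\C J_n,\C A) = \C B_{**}(\C I_n,\C A)\coprod_B \C B_{**}(\C J_n,\C A)$. By the definition of the concatenation and its boundary $d^v(B)+d^h(B)$, a $2$-chain of the form $(\ga_2,\ga_0)$ with $\ga_2\in C_2(\C A,\C I_n)$ sitting in the $0^{\T{th}}$ column of $\C B_{**}(\C I_n,\C A)$ and $\ga_0\in C_0(\C A,\C J_n)=\C J_n$ sitting in bidegree $(0,0)$ of $\C B_{**}(\C J_n,\C A)$ (shifted up one row under the concatenation) is a cycle precisely when two identities hold: first $b(\ga_2)=0$ in $C_1(\C A,\C I_n)$ (the vertical differential of $\ga_2$ vanishes, there being nothing to its left in its row since $\ga_2$ lives in column $0$ of $\C B(\C I_n,\C A)$), and second $B(\ga_0)=b(\ga_2)$ — no, more precisely the mixed term produced by the horizontal map $\al=B$ applied to $\ga_0$ must cancel against $b(\ga_2)$, together with $b(\ga_0)=0$ which is automatic since $\ga_0\in C_0$ and $b$ vanishes on $C_0(\C A,\C J_n)$. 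So after sorting out signs and the row/column bookkeeping of the concatenation, the statement reduces to the two computations
\[
b(\ga_2) = -\,B(\ga_0) \q\T{in } C_1(\C A,\C I_n), \qquad b(\ga_2)=0 \ \T{modulo the image of } B,
\]
i.e. really just: $b(\ga_2) + B(\ga_0) = 0$ (the honest single relation coming from $\T{Tot}$), plus checking membership $\ga_2\in C_2(\C A,\C I_n)$ and $\ga_0\in\C J_n$.

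Concretely, the membership claims are immediate: each of the five tensor terms defining $\ga_2$ has either $w^n$ or $v^n$ in some tensor slot, and $v^n,w^n\in\C I_n$ by definition, so $\ga_2\in C_2(\C A,\C I_n)$; and $\ga_0=w^n-v^n\in\C J_n$ by definition of $\C J_n$. The real work is the relation $b(\ga_2)+B(\ga_0)=0$. I would compute $B(\ga_0)$ first: with $\ga_0 = w^n-v^n\in C_0$, we have $B=(1-t)sN$, and on a $0$-chain $a$ one gets $N a = a$, $s a = 1\ot a$, $t(1\ot a) = -\,a\ot 1$ (degree $1$, sign $(-1)^1$), so $B(a) = 1\ot a + a\ot 1$. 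Hence $B(\ga_0) = 1\ot(w^n-v^n) + (w^n-v^n)\ot 1$. Next I would expand $b(\ga_2)$ term by term using the Hochschild boundary formula on $2$-chains, $b(x_0\ot x_1\ot x_2) = x_0x_1\ot x_2 - x_0\ot x_1x_2 + x_2x_0\ot x_1$. Here is where Lemma \ref{l:baside} does all the heavy lifting: the products $y\cd\om_n = 1-w^n$, $\om_n\cd y = 1-v^n$, and $v\cd\om_n=\om_n\cd w$ are exactly the contractions that appear when one multiplies adjacent tensor factors in the five terms of $\ga_2$. For instance $b(-w^n\ot y\ot\om_n)$ produces $-w^n y\ot\om_n + w^n\ot y\om_n - \om_n w^n\ot y = -w^n y\ot\om_n + w^n\ot(1-w^n) - \om_n w^n\ot y$; the term $w^n\ot 1$ will combine with the $B(\ga_0)$ contribution, and the remaining pieces are designed to telescope against the boundaries of the other four terms. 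I expect a systematic cancellation: the $\om_n\ot(\cdots)\ot y$-type and $v^n\ot\om_n\ot y$-type terms interact via $\om_n\cd w = v\cd\om_n$ and $\om_n\cd y = 1-v^n$, while the $-2(w^n-v^n)\ot 1\ot 1$ term is precisely the correction whose boundary $-2(w^n-v^n)\ot 1 + 2(w^n-v^n)\ot 1 - 2(w^n-v^n)\ot 1 = -2(w^n-v^n)\ot 1$ (using $1\cd 1 = 1$) kills the surplus factor of $2$ coming from the two "$\ot 1$" contributions generated elsewhere, leaving exactly $-\big(1\ot(w^n-v^n)+(w^n-v^n)\ot 1\big) = -B(\ga_0)$.

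The main obstacle is purely bookkeeping: keeping the signs straight. There are three sources of signs that must be reconciled — the $(-1)^k$ in the Hochschild boundary (with $k=2$ here, so most signs are $+$, but the cyclic last term carries its own sign), the $(-1)^k$ in the cyclic operator $t$ entering $B$, and the sign convention $d^v d^h + d^h d^v = 0$ in the concatenation, which forces the chain map $\al=B$ to satisfy $d^Y\al + \al d^X = 0$ and hence dictates the sign with which $B(\ga_0)$ enters the total boundary of $(\ga_2,\ga_0)$. My strategy would be to fix once and for all the explicit formula for the total differential on $\T{Tot}_2$ of the concatenation, namely $(\ga_2,\ga_0)\mapsto (b\ga_2 + B\ga_0,\ b\ga_0)$ up to an overall sign I will pin down from the degree-$1$ chain-map condition, then verify $b\ga_0 = 0$ trivially and reduce everything to the single algebraic identity $b\ga_2 = \pm B\ga_0$, which I then check by the term-by-term expansion above. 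I do not anticipate any conceptual difficulty beyond this — Lemma \ref{l:baside} has clearly been arranged so that every product that arises collapses — but the computation is long enough that organizing the five boundary expansions into a table and cancelling in pairs is essential to avoid errors.
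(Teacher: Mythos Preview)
Your proposal is correct and follows essentially the same approach as the paper: reduce the cycle condition to the single identity $b(\ga_2)+B(\ga_0)=0$ (together with the trivial membership checks), compute $B(\ga_0)=1\ot(w^n-v^n)+(w^n-v^n)\ot 1$, and then expand $b(\ga_2)$ term by term, using Lemma~\ref{l:baside} to collapse all the products $y\,\om_n$, $\om_n\,y$, and $v\,\om_n=\om_n\,w$ so that everything cancels down to $-B(\ga_0)$. The paper carries out exactly this computation with no additional ideas; your only remaining task is the bookkeeping you flagged.
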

\begin{proof}
The Hochschild boundary of $\ga_2$ is given by
\[
\begin{split}
b(\ga_2) & = -w^n \cd y \ot \om_n + w^n \ot y\cd \om_n - \om_n \cd w^n \ot y \\
& \q + v^n \cd \om_n \ot y - v^n \ot \om_n \cd y + y \cd v^n \ot \om_n \\
& \q - \om_n \cd w^n \ot y + \om_n \ot w^n \cd y - y \cd \om_n \ot w^n \\
& \q + \om_n \cd y \ot v^n - \om_n \ot y \cd v^n + v^n \cd \om_n \ot y \\
& \q - 2 \cd (w^n - v^n) \ot 1 \\
& = w^n \ot y\cd \om_n - v^n \ot \om_n \cd y - y \cd \om_n \ot w^n + \om_n \cd y \ot v^n \\
& \q - 2 \cd (w^n - v^n) \ot 1 \\
& = -1 \ot (w^n - v^n) - (w^n - v^n) \ot 1,
\end{split}
\]
where the second identity uses the last identity of Lemma \ref{l:baside} and the third identity uses the first two identities of Lemma \ref{l:baside}.

The Connes boundary of $\ga_0$ is given by
\[
B(w^n - v^n) = 1 \ot (w^n - v^n) + (w^n - v^n) \ot 1 = -b(\ga_2).
\]
This proves the proposition.
\end{proof}

We remark that the application of the periodicity operator introduced in Definition \ref{S-op}, $S : \C B_*(\C I_n,\C J_n,\C A) \to \C B_*(\C J_n,\C A)$, to the cycle $\ga$ gives the difference $\ga_0 = w^n - v^n \in \C J_n$.

%{\color{red}

\section{The trace cocycle and the homological pairing}\label{s:tracoc}
%}
%{\color{red}
\subsection{Definition of the trace cocycle}
%}
Let $\C H$ be a Hilbert space. Let $\sL(\C H)$ denote the bounded operators on $\C H$ and let $\sL^1(\C H)$ denote the operators of trace class. The operator trace is denoted by $\T{Tr} : \sL^1(\C H) \to \cc$. The trace norm on $\sL^1(\C H)$ is denoted by $\|\cd \|_1 : \sL^1(\C H) \to [0,\infty)$, thus $\|x\|_1 = \T{Tr}(|x|)$, for all $x \in \sL^1(\C H)$. The operator norm on $\sL(\C H)$ is denoted by $\|\cd \| : \sL(\C H) \to [0,\infty)$.

Recall that the operator trace satisfies the identity $\T{Tr}(xy) = \T{Tr}(yx)$ for any pair of operators $x,y \in \sL(\C H)$ with $x\cd y$ and $y \cd x \in \sL^1(\C H)$. See \cite[Corollary 3.8]{Sim:TIA}.

Let $n \in \nn$. Define the vector subspace $\C K_n \su C_n\big( \sL(\C H) \big)$ as follows,
\[
\C K_n := \T{span}\big\{ x_0 \olo x_n \, | \, x_i \clc x_n \cd x_0 \clc x_{i-1} \in \sL^1(\C H) \, \T{ for all }\, i \in \{0,\ldots,n\} \, \big\}
\]
Notice that $\C K_0 = \sL^1(\C H)$. It follows by definition that the subspace $\C K_n$ is invariant under the cyclic operator $t : C_n\big(\sL(\C H) \big) \to C_n\big(\sL(\C H) \big)$.

Introduce the multiplication operator $M : C_n\big( \sL(\C H) \big) \to \sL(\C H)$ by defining  $M : x_0 \olo x_n \mapsto x_0 \clc x_n$. 

\begin{dfn}\label{d:tranorcha}
Define the norm $\|\cd \|_1 : \C K_n \to [0,\infty)$ by the formula,
\[
\begin{split}
\| \cd \|_1 : x \mapsto & \inf\Big\{ \sum_{j = 1}^m \Big( \sum_{i=0}^n \| M( t^i(x_0^j \olo x_n^j)) \|_1 + \|x_0^j\| \clc \|x_n^j\| \Big) \, \\
& \qqqq \Big| \, x = \sum_{j=1}^m x_0^j \olo x_n^j \Big\}.
\end{split}
\]
\end{dfn}

Let $K_n$ denote the completion of $\C K_n$ with respect to the norm $\|\cd \|_1$.
We remark that the cyclic operator induces an isometric isomorphism $t : K_n \to K_n$.

\begin{lemma}
The Hochschild boundary and the Connes boundary induce continuous boundary operators
\[
b : K_n \to K_{n-1} \q \T{and} \q B : K_n \to K_{n+1}
\]
for all $n \in \nn \cup \{0\}$.
\end{lemma}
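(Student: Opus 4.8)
The plan is to show that $b$ and $B$ map $\C K_n$ into $\C K_{n-1}$ (resp. $\C K_{n+1}$) and are bounded for the norms of Definition \ref{d:tranorcha}, which then yields continuous extensions to the completions $K_n$. For the Hochschild boundary, recall $b$ is an alternating sum of the face maps $d_i : x_0 \olo x_n \mapsto x_0 \olo x_i x_{i+1} \olo x_n$ (indices mod $n+1$), so it suffices to treat each $d_i$. First I would check that $d_i$ preserves the defining condition: if every cyclic rotation of $x_0 \olo x_n$ has trace-class total product, then collapsing two adjacent factors to $x_i x_{i+1}$ only shortens the word while keeping the same set of cyclic products (up to grouping), so $d_i(x) \in \C K_{n-1}$. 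For boundedness, the key point is that the multiplication operator $M$ intertwines faces with honest operator multiplication, so $\|M(t^k(d_i x))\|_1 = \|M(t^{k'}(x))\|_1$ for a suitable rotation $k'$, while the operator-norm product term $\prod \|x_j\|$ can only decrease since $\|x_i x_{i+1}\| \le \|x_i\|\,\|x_{i+1}\|$. Summing over the $n+1$ faces gives $\|b(x)\|_1 \le (n+1)\|x\|_1$, and taking the infimum over representations $x = \sum_j x_0^j \olo x_n^j$ gives the bound on $\C K_n$, hence on $K_n$.

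For the Connes boundary, write $B = (1-t)sN$ with $s : x_0 \olo x_n \mapsto 1 \ot x_0 \olo x_n$ and $N = 1 + t \plp t^n$. Since $t$ is an isometric isomorphism of $\C K_n$ (already noted in the excerpt), $N$ is bounded with norm $\le n+1$ and preserves $\C K_n$; likewise $1-t$ is bounded on $\C K_{n+1}$. The only genuinely new ingredient is $s$. I would verify that $s$ sends $\C K_n$ into $\C K_{n+1}$: inserting a unit $1$ in front does not change any of the relevant products $x_{i} \clc x_n \cd 1 \clc x_{i-1} = x_i \clc x_{i-1}$, and the product that now starts at the inserted $1$, namely $1 \cd x_0 \clc x_n = x_0 \clc x_n$, is $M(x)$ which is trace class since $x \in \C K_n$ (it equals $M(t^0 x)$). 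So all $n+2$ cyclic products of $s(x)$ are trace class. For the norm estimate, each $\|M(t^k(s(x)))\|_1$ equals some $\|M(t^{k'}(x))\|_1$, and the operator-norm term picks up an extra factor $\|1\| = 1$, so $\|s(x)\|_1 \le \|x\|_1$. Composing, $\|B\| \le \|1-t\|\cdot\|s\|\cdot\|N\| \le 2(n+1)$ on $\C K_n$, giving the continuous extension $B : K_n \to K_{n+1}$.

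The main obstacle is bookkeeping rather than depth: one must be careful that the face maps and the degeneracy $s$ genuinely preserve the \emph{full} family of cyclic-product conditions defining $\C K_\bullet$ — in particular for $d_0$ and $d_n$, which involve the wrap-around term $x_n x_0 \ot \cdots$, and for the interaction of $s$ with $t$ inside $B$. I would handle this by noting that the condition "$x \in \C K_n$" is exactly the statement that $M(t^i x) \in \sL^1(\C H)$ for all $i$, and then tracking, for each generator $d_i$, $t$, $s$ of the operators $b$ and $B$, the explicit permutation it induces on the index set $\{M(t^i(\,\cdot\,))\}$; once that combinatorial correspondence is written down, both the invariance of $\C K_\bullet$ and the norm bounds with explicit constants $(n+1)$, $1$, $2$ follow immediately, and the $\|\cd\|_1$-continuity of $b$ and $B$ then passes to $K_n$ by density.
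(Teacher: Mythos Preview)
Your approach is essentially the same as the paper's: reduce $b$ to the individual face maps $d_k$ and track how $M t^i$ interacts with them, and reduce $B$ to the continuity of the extra degeneracy $s$ (using that $t$ is an isometry). The paper makes the face-map computation explicit via the identity $M t^i d_k = M t^{i+1}$ for $i \ge n-k$ and $M t^i d_k = M t^i$ for $i < n-k$, which is exactly the ``permutation on the index set'' you allude to.

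One small slip: your bound $\|s(x)\|_1 \le \|x\|_1$ does not quite follow from ``each $\|M(t^k(s(x)))\|_1$ equals some $\|M(t^{k'}(x))\|_1$'', because the left-hand sum has $n+2$ terms while the right-hand sum has only $n+1$; the cyclic product $x_0\cdots x_n$ appears for both $k=0$ and $k=n+1$. So the correct estimate is $\|s(x)\|_1 \le 2\|x\|_1$, and hence $\|B\| \le 4(n+1)$ rather than $2(n+1)$. This does not affect the argument, of course.
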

\begin{proof}
Recall that the Hochschild boundary on $C_n\big( \sL(\C H) \big)$ is given by the sum $b = \sum_{k=0}^n (-1)^k d_k$, where the terms are defined by
\[
d_k(x_0 \olo x_n) := \fork{ccc}{
x_0 \olo x_k \cd x_{k+1} \olo x_n & \T{for} & k \in \{0,\ldots,n-1\} \\
x_n \cd x_0 \ot x_1 \olo x_{n-1} & \T{for} & k = n
}.
\]
Let $k \in \{0,\ldots,n\}$ be fixed. Notice then that
\[
\begin{split}
M t^i d_k 
& = \fork{ccc}{
M t^{i - n + 1 + k} d_{n-1} t^{n-1-k} & \T{for} & i \in \{n-k,\ldots,n-1\} \\
M d_{k+i} t^i & \T{for} & i \in \{0,\ldots,n-k-1\}
} \\
& = \fork{ccc}{
M t^{i+1} & \T{for} & i \in \{n-k,\ldots,n-1\} \\
M t^i & \T{for} & i \in \{0,\ldots,n-k-1\}
}
\end{split}
\]
It follows that $d_k$ induces a continuous linear map $d_k : \C K_n \to \C K_{n-1}$. This shows that the Hochschild boundary induces a continuous boundary operator $b : K_n \to K_{n-1}$.

To prove that the Connes boundary $B = (1-t)sN$ induces a continuous boundary operator $B : K_n \to K_{n+1}$ it suffices to note that the extra degeneracy induces a continuous linear map $s : \C K_n \to \C K_{n+1}$.
\end{proof}

Let $\C B_{**}(K)$ denote the bicomplex given by the diagram
\begin{equation}\label{eq:cyctracom}
\begin{CD}
@V{b}VV  @V{b}VV @V{b}VV \\
K_2 @<<{B}< K_1 @<<{B}< K_0 \\
@V{b}VV @V{b}VV \\
K_1 @<<{B}< K_0 \\ 
@V{b}VV \\
K_0
\end{CD}
\end{equation}
%
%The totalization of $\C B_{**}(K)$ comes equipped with a chain map $\io : \T{Tot}\big( \C B_{**}(K) \big) \to \T{Tot}\big( \C B_{**}(\sL(\C H)) \big)$ which is induced by the inclusion $\io : K_* \to C_*(\sL(\C H))$. We will be interested in the mapping cone of this inclusion $MC_*(\io)$. 

Let $K^n := K_n^*$ denote the Banach space dual of $K_n$.
%Thus $K^n$ consists of the continuous linear maps $K_n \to \cc$. 
We then have the bicomplex $\C B^{**}(K)$ defined by
\[
\begin{CD}
@A{b}AA  @A{b}AA @A{b}AA \\
K^2 @>>{B}> K^1 @>>{B}> K_0 \\
@A{b}AA @A{b}AA \\
K^1 @>>{B}> K^0 \\ 
@A{b}AA \\
K^0
\end{CD}
\]
where the coboundary operators are the duals of the boundary operators in \eqref{eq:cyctracom}. 
%
%There is a cochain map $r : \T{Tot}\big( \C B^{**}(\sL(\C H))\big) \to \T{Tot}\big( \C B^{**}(K) \big)$ which is induced by the restriction $C^n(\sL(\C H)) \to K^n$. In particular we have the mapping cone complex $MC^*(r)$.

\begin{dfn}
The \emph{trace cocycle} $\T{Tr} \in K^0$ is given by the operator trace $\T{Tr} : K_0 = \sL^1(\C H) \to \cc$.
\end{dfn}

\noindent{\bf Remark}. The trace cocycle is a $0$-cocycle in $\T{Tot}\big(\C B^{**}(K) \big)$. Indeed, we have that $\T{Tr} \ci b = 0 : K_1 \to \cc$ since $\T{Tr}(x_0x_1) = \T{Tr}(x_1x_0)$ whenever $x_0 \ot x_1 \in \C K_1$, see \cite[Corollary 3.8]{Sim:TIA}.

\subsection{The homological index as a pairing}\label{s:homindpai}
Let $T \in \sL(\C H)$ be a bounded operator. Let $\pi : \C A \to \sL(\C H)$ be the unital algebra homomorphism given by $\pi : x \mapsto T$ and $\pi : y \mapsto T^*$. Recall here that $\C A$ denotes the free unital algebra over $\cc$ on the two generators $x$ and $y$.

\begin{dfn}\label{d:homindbou}
Suppose that there exists an $n \in \nn$ such that the difference $(1-T^*T)^n - (1-TT^*)^n \in \sL^1(\C H)$.
The \emph{homological index} of $T$ in degree $n \in \nn$ is defined as the number,
\[
\T{H-Ind}_n(T) := \T{Tr}\big((1-T^* T)^n - (1-T T^*)^n \big).
\]
\end{dfn}

Recall Definition \ref{chains} where we introduced  the cycle $\ga := (\ga_2,\ga_0) \in \C B_*(\C I_n,\C J_n,\C A)$. The unital algebra homorphism $\pi : \C A \to \sL(\C H)$ restricts to an algebra homomorphism $\pi : \C J_n \to \sL^1(\C H)$. 
 In particular, recalling here the Definition \ref{S-op} of the periodicity operator $S : \C B_2(\C I_{n},\C J_{n},\C A) \to \C B_0(\C J_{n},\C A)$ 
we have the cycle 

\[
(\pi \ci S)(\ga) = \pi(\ga_0) = (1-T^*T)^n - (1-T T^*)^n \in \C B_0(K).
\]

The following proposition is now clear.
\begin{prop}
The homological index coincides with the pairing in cyclic theory,
\[
\binn{ \T{Tr} ,(\pi \ci S)(\ga) } = \T{Tr}\big((1-T^* T)^{n} - (1-T T^*)^{n} \big) = \T{H-Ind}_n(T),
\]
where $\inn{\cd,\cd} : HC^0(K) \ti HC_0(K) \to \cc$.
\end{prop}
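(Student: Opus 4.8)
The statement to prove is essentially a bookkeeping identity: it asserts that the number $\mathrm{H\text{-}Ind}_n(T)$ equals the cyclic-theory pairing $\langle \mathrm{Tr}, (\pi\circ S)(\ga)\rangle$. The plan is to unwind both sides of the claimed equation and observe they are literally the same number, then check that everything is well-defined so that the pairing notation $\inn{\cd,\cd} : HC^0(K)\ti HC_0(K)\to\cc$ makes sense.

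First I would verify that $(\pi\circ S)(\ga)$ is a genuine $0$-cycle in $\C B_0(K) = K_0 = \sL^1(\C H)$. By the remark following Proposition \ref{p:twococ}, applying the periodicity operator $S$ to $\ga=(\ga_2,\ga_0)$ yields $\ga_0 = w^n - v^n \in \C J_n$; then applying the algebra homomorphism $\pi:\C A\to\sL(\C H)$, which restricts to $\pi:\C J_n\to\sL^1(\C H)$ by the hypothesis $(1-T^*T)^n-(1-TT^*)^n\in\sL^1(\C H)$, gives $\pi(\ga_0) = (1-T^*T)^n - (1-TT^*)^n$. Since this lives in $K_0$ with trivial boundary $b:K_0\to K_{-1}=\{0\}$, it is automatically a cycle, hence defines a class in $HC_0(K)$. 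On the cohomological side, the trace cocycle $\mathrm{Tr}\in K^0$ is a $0$-cocycle in $\mathrm{Tot}(\C B^{**}(K))$ by the Remark after its definition (using $\mathrm{Tr}(x_0x_1)=\mathrm{Tr}(x_1x_0)$), so it defines a class in $HC^0(K)$.

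Next I would evaluate the pairing: the pairing of $\mathrm{Tr}\in K^0$ with the $0$-cycle $(\pi\circ S)(\ga) = (1-T^*T)^n - (1-TT^*)^n \in K_0 = \sL^1(\C H)$ is, by definition of the dual pairing between a Banach space and its dual, just $\mathrm{Tr}$ evaluated on that element, namely $\mathrm{Tr}\big((1-T^*T)^n - (1-TT^*)^n\big)$. This is exactly $\mathrm{H\text{-}Ind}_n(T)$ by Definition \ref{d:homindbou}. The chain of equalities in the proposition then follows immediately.

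The only point requiring a little care — and the closest thing to an obstacle — is confirming that the pairing descends to homology/cohomology classes, i.e.\ that it does not depend on the choice of representatives. This is the standard fact that pairing a cocycle with a cycle is well-defined modulo coboundaries and boundaries: if $\mathrm{Tr} = \phi\circ(b+B)$ for some $\phi$ it would annihilate cycles, and if $(\pi\circ S)(\ga) = (b+B)(\eta)$ it would be annihilated by cocycles; in degree $0$ the second possibility is vacuous since there are no chains in negative degree, and the first is excluded precisely because $\mathrm{Tr}$ is a nonzero cocycle. Since the excerpt has already established that $\mathrm{Tr}$ is a cocycle and that $(\pi\circ S)(\ga)$ is a cycle, invoking this standard homological-algebra fact about the pairing $HC^0(K)\ti HC_0(K)\to\cc$ completes the argument; indeed, as the excerpt says, "the following proposition is now clear."
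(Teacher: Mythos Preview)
Your proposal is correct and matches the paper's approach: the paper itself offers no proof beyond the phrase ``the following proposition is now clear,'' and your argument simply unwinds the definitions exactly as the preceding text sets them up. The one slightly muddled remark---that $\mathrm{Tr}$ being a nonzero cocycle ``excludes'' it from being a coboundary---is irrelevant to well-definedness (a coboundary paired with a cycle would still give a well-defined, if zero, value), but this is harmless and the rest is fine.
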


\section{Invariance properties of the bounded homological index}\label{s:invarpro}

\subsection{Homotopies in cyclic theory} 
Our approach to the invariance properties of our homological index is to use a variation on the well known notion from cyclic theory of homotopy invariance, see for example \cite[Lemma 1.21]{Kar:HCK} or \cite[Proposition 2.11]{CuQu:EPC}.
Thus we begin with $\C B$ a unital Banach algebra and we let $\C A$ be a unital $\cc$-algebra. 

Suppose that $\pi_t : \C A \to \C B$ is a unital algebra homomorphism for each $t \in [0,1]$ such that $t \mapsto \pi_t(x)$ is continuously differentiable for all $x \in \C A$. We will apply the notation $x_t := \pi_t(x)$ and $\frac{dx}{dt}$ for the derivative of $t \mapsto x_t$ noting that by assumption,  $\frac{dx}{dt} : [0,1] \to \C B$ is a continuous function.

For each $n \in \nn \cup \{0\}$, let $C_n^{\T{top}}(\C B) = \C B \hot \C B \hot \ldots \hot \C B$ denote the $(n+1)$-fold projective tensor product of $\C B$ with itself. Consider the linear map $J : C_{n+1}(\C A) \to C_n^{\T{top}}(\C B)$ defined by
\begin{equation}\label{eq:defjay}
J : C_{n+1}(\C A) \to C_n^{\T{top}}(\C B) \q J(x^0 \olo x^{n+1}) = \int_0^1 x^0_s \cd \frac{dx^1}{dt}\big|_s \ot x^2_s \olo x^{n+1}_s \, ds \\
\end{equation}

We now need two technical lemmas in preparation for our main result of this Section.
\begin{lemma}\label{l:homhoc} We have the relation
$
bJ + Jb = 0
$
\end{lemma}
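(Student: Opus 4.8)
The plan is to verify the identity $bJ + Jb = 0$ by a direct computation, comparing term by term the Hochschild boundary applied after $J$ with $J$ applied after the Hochschild boundary. The essential point is that the map $J$ produces a chain in $C_n^{\T{top}}(\C B)$ from a chain in $C_{n+1}(\C A)$ by integrating, in each slot, a term in which the $1$-component $\frac{dx^1}{dt}\big|_s$ is distinguished; so $b$ acting on the output touches the product structure around this distinguished slot, and one must check that these contributions are exactly cancelled by what $J$ contributes when $b$ is applied first and collapses $x^0$ or $x^1$ or the cyclic face.

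First I would write $b = \sum_{k=0}^{n}(-1)^k d_k$ on $C_n^{\T{top}}(\C B)$ and $b = \sum_{k=0}^{n+1}(-1)^k d_k$ on $C_{n+1}(\C A)$, using the face maps $d_k$ as in the paper. Applying $J$ to $x^0 \olo x^{n+1}$ and then $d_k$: for $k \geq 2$ the faces act on the components $x^2_s,\ldots,x^{n+1}_s$ and commute cleanly with $J$ up to a reindexing of the faces on the source; for $k = 0$ one merges $x^0_s \cd \frac{dx^1}{dt}\big|_s$ with $x^2_s$; for $k = 1$ one merges the first two slots, and for the cyclic face $k = n$ one wraps $x^{n+1}_s$ around to the front. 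On the other side, $Jb$ requires care about which faces on the source feed into $J$: the faces $d_0$ and $d_1$ on $C_{n+1}(\C A)$ produce chains whose first slot is no longer a single generator but a product (e.g. $x^0x^1$ or $x^1 x^2$), so when we then apply $J$ we must use the Leibniz rule $\frac{d}{dt}(x^0x^1)_s = \frac{dx^0}{dt}\big|_s \cd x^1_s + x^0_s \cd \frac{dx^1}{dt}\big|_s$ — this is precisely where the two "extra" terms that break naive commutation appear, and I expect them to cancel against the $d_0$ and $d_1$ terms of $bJ$.

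The key bookkeeping step, and the main obstacle, is handling the boundary faces near the distinguished slot: the terms coming from $d_0, d_1$ on the target of $J$ and from $d_0, d_1, d_2$ on the source of $J$ (together with the cyclic faces $d_{n}$ on the target and $d_{n+1}$ on the source) must be matched up with the correct signs. The sign shift between $b$ on $C_{n+1}$ and $b$ on $C_n$ (the paper's convention uses $(-1)^k$ in both degrees, so there is a net sign discrepancy that must be tracked) is what makes $bJ + Jb$ vanish rather than $bJ - Jb$. I would organize the proof by splitting the faces into three groups — those away from the distinguished slot (which cancel in pairs by a straightforward reindexing), the Leibniz-type terms from $d_0$ and $d_1$ on the source (which cancel against $d_0$, $d_1$ on the target), and the cyclic/wrap-around terms — and check each group cancels. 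Since $\frac{dx}{dt}$ is continuous on $[0,1]$ and the algebra operations and projective tensor product are continuous, the integral $\int_0^1 ds$ commutes with all these algebraic manipulations, so no convergence issue arises and the identity reduces to the purely combinatorial cancellation of faces.
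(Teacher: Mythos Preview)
Your proposal is correct and is essentially the paper's proof: a direct face-by-face comparison showing $Jd_i = d_{i-1}J$ for $i \in \{2,\ldots,n\}$, $Jd_{n+1} = d_nJ$ for the cyclic face, and $Jd_0 - Jd_1 = -d_0J$ via the Leibniz rule, after which the signs reassemble to give $Jb = -bJ$. One small imprecision: the Leibniz rule enters only in $Jd_1$, where the \emph{second} slot becomes the product $x^1x^2$ and is differentiated; in $Jd_0$ the product $x^0x^1$ lands in the zeroth slot and is merely multiplied against $\frac{dx^2}{dt}$, so no derivative of a product occurs there --- adjusting this, your three-group bookkeeping matches the paper exactly.
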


\begin{proof}
Let $x = x^0 \olo x^{n+1} \in C_{n+1}(\C A)$. We have that
\[
\begin{split}
(Jd_i)(x) & = \int_0^1 x^0_s \cd \frac{dx^1}{dt}\big|_s \olo x^i_s x^{i+1}_s \olo x^{n+1}_s \, ds \\
& = (d_{i-1} J)(x)
\end{split}
\]
for all $i \in \{2,\ldots,n\}$. Likewise, we have that
\[
\begin{split}
(Jd_{n+1})(x) & = \int_0^1 x^{n+1}_s \cd x^0_s \cd \frac{dx^1}{dt}\big|_s \olo x^n_s \, ds \\
& = (d_n J)(x)
\end{split}
\]
Notice finally that
\[
\begin{split}
(Jd_0)(x) - (Jd_1)(x) 
& = \int_0^1 \big( x^0_s \cd x^1_s \cd \frac{dx^2}{dt}\big|_s - x^0_s \cd \frac{d(x^1 x^2)}{dt}\big|_s\big) \ot x^3_s \olo x^{n+1}_s \, ds\\
& = -\int_0^1 x^0_s \cd \frac{dx^1}{dt}\big|_s \cd x^2_s \ot x^3_s \olo x^{n+1}_s \, ds\\
& = -(d_0 J)(x)
\end{split}
\]
These computations imply that
\[
(Jb)(x) = \sum_{i=0}^{n+1} (-1)^i (Jd_i)(x) = -\sum_{i=0}^n (-1)^i (d_i J)(x) = -(bJ)(x)
\]
This proves the lemma.
\end{proof}

\begin{lemma}\label{l:homcon}
Let $x = x^0 \olo x^n \in C_n(\C A)$. Then
\[
(JB)(x) = \sum_{i=0}^n (-1)^{i \cd n}  \int_0^1 \frac{dx^i}{dt}\big|_s \olo x^n_s \ot x^0_s \olo x^{i-1}_s \, ds
\]
\end{lemma}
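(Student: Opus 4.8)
The plan is to compute $(JB)(x)$ directly from the definitions, unwinding $B = (1-t)sN$ step by step and tracking signs carefully. First I would recall that $N = 1 + t + \dots + t^n$ on $C_n(\C A)$, that $s(x^0 \olo x^n) = 1 \ot x^0 \olo x^n$, and that $t$ acts by the signed cyclic permutation $t(x^0 \olo x^n) = (-1)^n x^n \ot x^0 \olo x^{n-1}$. So I would begin by writing out $sN(x)$ as the sum over $j = 0,\dots,n$ of the terms $1 \ot t^j(x)$, each of which is an element of $C_{n+1}(\C A)$ of the form $1 \ot x^{n-j+1}_{\bullet} \olo x^n_{\bullet} \ot x^0_{\bullet} \olo x^{n-j}_{\bullet}$ (up to the accumulated sign, which I would pin down as a power of $(-1)^n$ depending on $j$).

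Next I would apply $(1-t)$ to each term of $sN(x)$. The key structural point is that $J$ annihilates any chain of the form $1 \ot (\text{stuff})$ because the second tensor slot of $J(x^0 \olo x^{n+1})$ involves $\tfrac{d x^1}{dt}$, and when $x^1 = 1$ (a constant, the unit) this derivative vanishes — so $J \circ s = 0$. Therefore $(JB)(x) = J(1-t)sN(x) = -J\,t\,sN(x)$, and I only need to evaluate $-Jt$ on each of the $n+1$ summands of $sN(x)$. Applying $t$ to $1 \ot t^j(x)$ moves the last entry of $t^j(x)$ into the front slot (with a sign), producing a genuine $(n+1)$-chain whose first entry is one of the $x^i$'s and whose remaining entries are the cyclic continuation; then $J$ replaces that first entry $x^i$ by $\int_0^1 \tfrac{dx^i}{dt}\big|_s \cdot (\text{next entry})_s$, but since after the shift the "$x^0$" slot is a single generator rather than a product, $J$ simply yields $\int_0^1 \tfrac{dx^i}{dt}\big|_s \olo (\text{cyclic tail})_s\, ds$. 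Collecting the $n+1$ terms, reindexing so the differentiated factor is $x^i$ for $i = 0,\dots,n$, and consolidating all the $(-1)^n$ powers should produce exactly $\sum_{i=0}^n (-1)^{i \cdot n} \int_0^1 \tfrac{dx^i}{dt}\big|_s \olo x^n_s \ot x^0_s \olo x^{i-1}_s\, ds$.

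The main obstacle I anticipate is purely bookkeeping: getting the sign $(-1)^{i \cdot n}$ to come out correctly requires carefully composing the sign from $t^j$ in $N$ (a factor $(-1)^{nj}$ or similar), the sign from the outer $t$ in $(1-t)$, and any sign convention in $J$ itself; an off-by-$(-1)^n$ or $(-1)^i$ error is easy to make. I would handle this by treating small cases ($n=1,2$) explicitly as a check before committing to the general formula, and by being scrupulous about whether $s$ is applied before or after $N$ and about the convention $t(x^0\olo x^n) = (-1)^n x^n \ot x^0 \olo x^{n-1}$. A secondary point to verify is that each term $t\,(1 \ot t^j x)$ really does land in the domain where the stated simplification of $J$ (no product in the leading slot) applies — this is automatic because $s$ inserts the unit and $t$ then rotates a single generator into the front — so no genuine analytic subtlety arises, only the combinatorics of indices and signs.
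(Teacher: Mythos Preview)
Your strategy is the same as the paper's --- compute $J(1-t)s$ on a single elementary tensor and then sum over the terms in $N$ --- but you have inverted which of the two pieces vanishes, and this breaks the argument.

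Look again at the definition
\[
J(x^0 \otimes \cdots \otimes x^{n+1}) \;=\; \int_0^1 x^0_s \cdot \frac{dx^1}{dt}\Big|_s \otimes x^2_s \otimes \cdots \otimes x^{n+1}_s \, ds .
\]
The derivative falls on the slot~$1$ entry, while the slot~$0$ entry is merely evaluated and multiplied in. For $s(x) = 1 \otimes x^0 \otimes \cdots \otimes x^n$ the unit sits in slot~$0$, not slot~$1$; hence
\[
(Js)(x) \;=\; \int_0^1 1 \cdot \frac{dx^0}{dt}\Big|_s \otimes x^1_s \otimes \cdots \otimes x^n_s \, ds,
\]
which is \emph{not} zero in general. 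Your claim ``$J\circ s = 0$'' is therefore false, and the reduction $(JB)(x) = -J\,t\,sN(x)$ collapses. In fact the opposite happens: applying $t$ to $s(x)$ cycles the unit into slot~$1$, namely $t\,s(x) = (-1)^{n+1} x^n \otimes 1 \otimes x^0 \otimes \cdots \otimes x^{n-1}$, and \emph{that} is what $J$ kills, since $\frac{d1}{dt}=0$. So the surviving piece is $J\,s$, not $-J\,t\,s$.

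Once you correct this, the computation is exactly the paper's: one shows
\[
\big(J(1-t)s\big)(x) \;=\; \int_0^1 \frac{dx^0}{dt}\Big|_s \otimes x^1_s \otimes \cdots \otimes x^n_s \, ds,
\]
and then applies this identity to each cyclic rotate $t^i(x)$ appearing in $N(x)$; the sign $(-1)^{i\cdot n}$ comes straight from $t^i$ acting on $C_n(\C A)$. Your later description of what happens after applying $t$ (``$J$ replaces that first entry $x^i$ by $\int_0^1 \tfrac{dx^i}{dt}|_s \cdot (\text{next entry})_s$'') repeats the same slot confusion, so be careful to track that $J$ differentiates slot~$1$ throughout.
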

\begin{proof}
It is enough to show that 
\[
(J(1-t)s)(x) = \int_0^1 \frac{dx^0}{dt}\big|_s \ot x^1_s \olo x^n_s \, ds
\]
This follows since
\[
(Js)(x) = J(1 \ot x^0 \olo x^n) = \int_0^1 \frac{dx^0}{dt}\big|_s \ot x^1_s \olo x^n_s \, ds
\]
and since
\[
\begin{split}
-(Jts) & = -(-1)^{n+1} J(x^n \ot 1 \ot x^0 \olo x^{n-1}) \\
& = (-1)^n \int_0^1 x^n_s \cd \frac{d1}{dt}\big|_s \ot x^2_s \olo x^{n-1}_s \, ds
= 0.
\end{split}
\]
\end{proof}
The homological invariance result can now be stated.
\begin{thm}\label{t:hominv}
Let $x \in C_n^\la(\C A)$ and suppose that there exists an element $y \in C_{n+2}(\C A)$ such that $b(y) = B(x)$. Then the identity
$
\pi_0(x) - \pi_1(x) = (bJ)(y)
$
holds in $C_n^{\la,\T{top}}(\C B)$.
\end{thm}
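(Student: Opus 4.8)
The plan is to apply the chain homotopy formula built from $J$ to the element $y$. First I would observe that $C_n^\la(\C A)$ denotes the subspace of cyclic chains (those fixed by $t$, up to sign), so $x = \frac{1}{n+1}N(x)$ in an appropriate normalization and more usefully $B(x)$ can be rewritten in a form where Lemma~\ref{l:homcon} applies cleanly; the hypothesis $b(y) = B(x)$ ties the two technical lemmas together. The key computation is to expand $(bJ)(y)$ and use Lemma~\ref{l:homhoc}, which gives $bJ = -Jb$, so that $(bJ)(y) = -(Jb)(y) = -J(B(x))$. Thus the whole theorem reduces to the identity $\pi_0(x) - \pi_1(x) = -(JB)(x)$ in $C_n^{\la,\T{top}}(\C B)$.

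To establish that identity I would use Lemma~\ref{l:homcon}: $(JB)(x) = \sum_{i=0}^n (-1)^{in} \int_0^1 \frac{dx^i}{dt}\big|_s \otimes x^n_s \otimes \cdots \otimes x^0_s \otimes \cdots \otimes x^{i-1}_s \, ds$. The right-hand side is a sum over cyclic rotations; since $x$ is a cyclic chain, each rotated term $t^{\pm}$ applied to $x$ gives back $x$ with the appropriate sign, and these signs should be exactly $(-1)^{in}$, so that the sum collapses to $(n+1)$ copies of a single integral $\int_0^1 \frac{d}{ds}\big(\text{something}\big)\,ds$ — or, after accounting for the normalization of a cyclic chain, exactly one such integral. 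The integrand should be recognized as a total $s$-derivative: writing $x = x^0 \olo x^n$ and noting $\frac{d}{ds}(x^0_s \otimes \cdots \otimes x^n_s) = \sum_i x^0_s \otimes \cdots \otimes \frac{dx^i}{dt}\big|_s \otimes \cdots \otimes x^n_s$, the cyclic symmetry moves each $\frac{dx^i}{dt}$ term to the front, matching the Lemma~\ref{l:homcon} expression. Then the fundamental theorem of calculus gives $\int_0^1 \frac{d}{ds}(x^0_s \olo x^n_s)\,ds = \pi_1(x) - \pi_0(x)$, and tracking the overall sign produces the claimed formula $\pi_0(x) - \pi_1(x) = (bJ)(y)$.

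I expect the main obstacle to be bookkeeping the signs and the cyclic-symmetry identification precisely: one must be careful about the exact convention for $C_n^\la(\C A)$ (invariance under $t$ versus under $(-1)^n t$), about how the factor $(-1)^{in}$ in Lemma~\ref{l:homcon} interacts with the cyclicity of $x$, and about whether a combinatorial factor like $n+1$ appears and must be absorbed. A secondary, more minor point is justifying that differentiation commutes with the projective tensor product and with the integral over $[0,1]$, so that $\frac{d}{ds}(x^0_s \olo x^n_s)$ makes sense in $C_n^{\T{top}}(\C B)$ and the fundamental theorem of calculus is valid there; this follows from the assumed continuous differentiability of $t \mapsto \pi_t(x)$ together with continuity of multiplication in $\C B$, and I would state it as a routine consequence rather than belabor it. Once the sign conventions are pinned down, the proof is short: combine Lemma~\ref{l:homhoc} and the hypothesis to get $(bJ)(y) = -(JB)(x)$, evaluate $(JB)(x)$ via Lemma~\ref{l:homcon}, recognize a total derivative using cyclicity of $x$, and integrate.
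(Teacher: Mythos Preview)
Your overall strategy matches the paper's exactly: use Lemma~\ref{l:homhoc} to rewrite $(bJ)(y) = -(Jb)(y) = -(JB)(x)$, expand $(JB)(x)$ via Lemma~\ref{l:homcon}, identify a total $s$-derivative, and integrate.

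The place where your sketch goes slightly astray is in how the cyclic symmetry enters. You read $C_n^\la$ as the subspace of $t$-invariants and plan to exploit $t$-invariance of $x$ in the \emph{source} to collapse the sum from Lemma~\ref{l:homcon}; this is what produces your worry about a spurious factor $n+1$. The paper instead takes the coinvariant convention, defining (inside the proof) $C_n^{\la,\T{top}}(\C B) := C_n^{\T{top}}(\C B)/\T{Im}(1-t)$, and the claimed identity lives in that quotient. The point is then that the cyclic operator $t$ acts as the identity in the \emph{target}: each summand from Lemma~\ref{l:homcon} is exactly $t^{n+1-i}$ applied to $\int_0^1 x^0_s \otimes \cdots \otimes \frac{dx^i}{dt}\big|_s \otimes \cdots \otimes x^n_s\, ds$, so in the quotient the sum over $i$ becomes $\sum_{i=0}^n \int_0^1 x^0_s \otimes \cdots \otimes \frac{dx^i}{dt}\big|_s \otimes \cdots \otimes x^n_s\, ds$, which is precisely the Leibniz expansion of $\int_0^1 \frac{d}{ds}\pi_s(x)\, ds = \pi_1(x) - \pi_0(x)$. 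No combinatorial factor appears, and no cyclicity of $x$ in the source is actually used in the computation.
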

\begin{proof}
For each $i \in \{0,\ldots,n\}$, let $\ka_i : C_n(\C A) \to C_n^{\T{top}}\big( C([0,1],\C B) \big)$ denote the linear map defined by
\[
\ka_i : z^0 \olo z^n \mapsto \pi(z^0) \olo \frac{d z^i}{dt} \olo \pi(z^n)
\]
where $\pi(z^j) : [0,1] \to \C B$ is given by $\pi(z^j)(t) := \pi_t(z^j) = z^j_t$. Furthermore, for each $s \in [0,1]$, let
\[
\T{ev}_s : C_n^{\T{top}}\big( C([0,1],\C B) \big) \to C_n^{\T{top}}(\C B)
\]
be the evaluation operator defined by $\T{ev}_s : \al_0 \olo \al_n \mapsto \al_0(s) \olo \al_n(s)$.
 
By Lemma \ref{l:homhoc} and Lemma \ref{l:homcon} we have that
\[
(bJ)(y) = -(Jb)(y) = -(JB)(x) = -\sum_{i=0}^n t^{n+1-i} \int_0^1 \T{ev}_s\big( \ka_i(x)\big) \, ds
\]
This implies that
\[
(bJ)(y) = -\sum_{i=0}^n \int_0^1 \T{ev}_s\big( \ka_i(x)\big) \, ds = \pi_0(x) - \pi_1(x)
\]
in the cyclic complex $C_n^{\la,\T{top}}(\C B) := C_n^{\T{top}}(\C B)/\T{Im}(1-t)$. But this is the desired identity.
\end{proof}

\subsection{Homotopy invariance properties}\label{ss:hominvpro}
Let $\C H$ be a Hilbert space and let $t \mapsto T_t^+$ be a continuously differentiable path of bounded operators (with respect to the operator norm).

For each $t \in [0,1]$, define the bounded operators
\[
\arr{ccc}{
T_t^- := (T_t^+)^* & R_t^- := 1 - T_t^+ \cd T_t^- & R_t^+ := 1 - T_t^- \cd T_t^+
}
\]

As usual, let $\C A$ denote the free unital algebra over $\cc$ with two generators $x,y$.

The assignments $\pi_t : x \mapsto T_t^+$ and $\pi_t : y \mapsto T_t^-$ define a unital algebra homomorphism $\pi_t : \C A \to \sL(\C H)$ for all $t \in [0,1]$. It follows by our assumption on the path $t \mapsto T_t^+$ that the map $t \mapsto \pi_t(z)$ is continuously differentiable in operator norm for all $z \in \C A$.

Let $n \in \nn$ and recall from Proposition \ref{p:twococ} that there exists a chain $\ga_2 \in C_2(\C I_n,\C A)$ such that $B(w^n - v^n) = -b(\ga_2)$. The chain $\ga_2 \in C_2(\C I_n,\C A)$ has the explicit form
\[
\begin{split}
\ga_2 & = -w^n \ot y \ot \om_n + v^n \ot \om_n \ot y - \om_n \ot w^n \ot y + \om_n \ot y \ot v^n \\ 
& \q - 2 \cd (w^n - v^n) \ot 1 \ot 1
\end{split}
\]
where $v := 1 - xy$, $w := 1- yx$ and $\om_n := x + vx \plp v^{n-1}x$.

It therefore follows immediately from Theorem \ref{t:hominv} that $\pi_0(w^n - v^n) - \pi_1(w^n - v^n)$ is a boundary in the cyclic complex $C_*^{\la,\T{top}}\big( \sL(\C H) \big)$. 

We shall now provide conditions on the path $t \mapsto T_t^+$ which entail that the difference $\pi_0(w^n - v^n) - \pi_1(w^n- v^n) = (R_0^+)^n - (R_0^-)^n - (R_1^+)^n + (R_1^-)^n$ becomes a boundary in the chain complex $C^{\la}_*(K)$ defined in Section \ref{s:tracoc}. This will imply an important invariance result for our bounded homological indices.

For each $t \in [0,1]$, we introduce the bounded selfadjoint operators
\[
R_t := \ma{cc}{R_t^+ & 0 \\ 0 & R_t^-} \q \T{and} \q T_t := \ma{cc}{0 & T_t^- \\ T_t^+ & 0}
\]
which act on the direct sum $\C H \op \C H$. Remark that the paths $t \mapsto T_t$ and $t \mapsto R_t$ are continuously differentiable in operator norm by our standing assumptions.

\begin{thm}\label{t:hominvII}
Let $n \in \nn$. Suppose that $(R_t^+)^n - (R_t^-)^n \in \sL^1(\C H)$ for all $t \in [0,1]$. Suppose furthermore that
$
\frac{d(R_t^n)}{dt} \, \, \T{ and } \, \,  \frac{d T_t}{dt} \cd R_t^n  \in \sL^1(\C H \op \C H)
$
for all $t \in [0,1]$ and that the maps
$
%\begin{split}
t \mapsto \frac{d(R_t^n)}{dt} \, \, \T{ and } \, \, t \mapsto \frac{d T_t}{dt} \cd R_t^n
%\end{split}
$
are continuous in trace norm. 

Then $(R_0^+)^n - (R_0^-)^n - (R_1^+)^n + (R_1^-)^n \in \sL^1(\C H)$ determines the trivial element in $H^\la_0(K)$. In particular, we obtain that the homological indices in degree $n$ of $T_0^+$ and $T_1^+$ coincide, $\T{H-Ind}_n(T_0^+) = \T{H-Ind}_n(T_1^+)$.
\end{thm}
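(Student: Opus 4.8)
\emph{Proof strategy.} The plan is to promote the consequence of Theorem~\ref{t:hominv} recorded just above it to the trace-norm completed complex $C_*^\la(K)$ of Section~\ref{s:tracoc}. Put $\xi := (R_0^+)^n - (R_0^-)^n - (R_1^+)^n + (R_1^-)^n = \pi_0(w^n-v^n) - \pi_1(w^n-v^n)$, which lies in $\sL^1(\C H) = K_0$ by the first hypothesis. Applying Theorem~\ref{t:hominv} with $x = w^n - v^n \in C_0^\la(\C A)$ and $y = -\ga_2 \in C_2(\C A)$ --- legitimate since $b(-\ga_2) = B(w^n - v^n)$ by Proposition~\ref{p:twococ}, and since $t$ is the identity in degree $0$ so $C_0^\la = C_0$ --- gives the identity $\xi = -(bJ)(\ga_2)$ in $\sL(\C H)$. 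So everything reduces to showing that, under the remaining hypotheses, the chain $J(\ga_2)$ lifts to the Banach space $K_1$, i.e.\ is the image of a $K_1$-element under the canonical norm-$\le 1$ chain map $K_1 \to C_1^{\T{top}}(\sL(\C H))$. Granting this, $b(J(\ga_2))$ computed in $K_0 = \sL^1(\C H)$ equals $-\xi$, so $\xi$ is a boundary in $C_*^\la(K)$ (note $C_0^\la(K) = K_0$), hence $[\xi] = 0$ in $H_0^\la(K)$; and since $\T{Tr}\ci b = 0$ on $K_1$ (the Remark after the trace cocycle), $\T{H-Ind}_n(T_0^+) - \T{H-Ind}_n(T_1^+) = \T{Tr}(\xi) = -\T{Tr}(bJ(\ga_2)) = 0$ at once.

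So I would next compute $J(\ga_2)$. Writing $\Omega_t := \pi_t(\om_n) = \sum_{k=0}^{n-1}(R_t^-)^k T_t^+$ and evaluating $J$ on the five summands of $\ga_2$ listed in Section~\ref{ss:hominvpro} --- the last dying since $\frac{d1}{dt} = 0$ --- gives $J(\ga_2) = \sum_{i=1}^4 c_i \int_0^1 a_i(s)\ot b_i(s)\, ds$ with $c_i \in \{\pm 1\}$ and
\begin{gather*}
(a_1,b_1) = \big((R_s^+)^n\tfrac{dT_s^-}{dt},\,\Omega_s\big), \q (a_2,b_2) = \big((R_s^-)^n\tfrac{d\Omega_s}{dt},\,T_s^-\big), \\
(a_3,b_3) = \big(\Omega_s\tfrac{d(R_s^+)^n}{dt},\,T_s^-\big), \q (a_4,b_4) = \big(\Omega_s\tfrac{dT_s^-}{dt},\,(R_s^-)^n\big).
\end{gather*}
Since $K_1$ is complete, it suffices that each $s\mapsto a_i(s)\ot b_i(s)$ be a continuous map $[0,1] \to K_1$; the Bochner integral then lies in $K_1$ and maps correctly into $C_1^{\T{top}}(\sL(\C H))$ because bounded linear maps commute with Bochner integration. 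By Definition~\ref{d:tranorcha} and the decomposition $a(s)\ot b(s) - a(s')\ot b(s') = a(s)\ot(b(s)-b(s')) + (a(s)-a(s'))\ot b(s')$, such a path is continuous into $K_1$ once, for $s,s'$ nearby, all the products $a(s)b(s)$, $b(s)a(s)$, $a(s)b(s')$, $b(s')a(s)$, $a(s')b(s')$, $b(s')a(s')$ lie in $\sL^1(\C H)$ and vary trace-norm continuously near the diagonal, while $s\mapsto a(s),\,b(s)$ are operator-norm continuous.

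The inputs I would assemble are: (a) operator-norm continuity of $R_t^\pm$, $T_t^\pm$, $\Omega_t$ and all their $t$-derivatives, automatic from the standing $C^1$ assumption since $\Omega_t$ is a fixed polynomial in $T_t^\pm$; (b) from the hypotheses, read off the matrix entries of $\frac{d(R_t^n)}{dt}$ and $\frac{dT_t}{dt}R_t^n$: the operators $\frac{d(R_t^\pm)^n}{dt}$, $\frac{dT_t^+}{dt}(R_t^+)^n$, $\frac{dT_t^-}{dt}(R_t^-)^n$, and their adjoints $(R_t^+)^n\frac{dT_t^-}{dt}$, $(R_t^-)^n\frac{dT_t^+}{dt}$, lie in $\sL^1(\C H)$ and are trace-norm continuous, whence their increments, e.g.\ $(R_s^\pm)^n - (R_{s'}^\pm)^n$ and $\frac{dT_s^\pm}{dt}(R_s^\pm)^n - \frac{dT_{s'}^\pm}{dt}(R_{s'}^\pm)^n$, lie in $\sL^1(\C H)$ with trace norm $\to 0$ as $s'\to s$, being antiderivatives of trace-norm-continuous $\sL^1(\C H)$-valued functions; (c) Lemma~\ref{l:baside}, in particular $R_t^- T_t^+ = T_t^+ R_t^+$, $(R_t^-)^n\Omega_t = \Omega_t(R_t^+)^n$, $\Omega_t T_t^- = 1 - (R_t^-)^n$, $T_t^-\Omega_t = 1 - (R_t^+)^n$ (and their $t$-derivatives). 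With (a)--(c): for $i = 1,3$ the slot $a_i(s)$ is itself trace class and trace-norm continuous ($a_1$ is the adjoint of $\frac{dT_s^+}{dt}(R_s^+)^n$, and $a_3 = \Omega_s\cdot\frac{d(R_s^+)^n}{dt}$), and for $i = 2$ expanding $\frac{d\Omega_s}{dt}$ and repeatedly moving factors $(R_s^-)^n$ past $T_s^+$ via $R_t^- T_t^+ = T_t^+ R_t^+$ shows that $a_2(s) = (R_s^-)^n\frac{d\Omega_s}{dt}$ is also in $\sL^1(\C H)$ and trace-norm continuous, so these three paths are continuous into $K_1$ by the easy ``one trace-class slot'' case. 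Only $i = 4$ is genuinely mixed: there $a_4 b_4 = \Omega_s\big(\frac{dT_s^-}{dt}(R_s^-)^n\big)$ and $b_4 a_4 = (R_s^-)^n\Omega_s\frac{dT_s^-}{dt} = \Omega_s\big((R_s^+)^n\frac{dT_s^-}{dt}\big)$ are trace class, while the off-diagonal products are handled using the trace-class increments of (b) --- for instance $\Omega_s\frac{dT_s^-}{dt}(R_{s'}^-)^n = \Omega_s\big(\frac{dT_s^-}{dt}(R_s^-)^n\big) + \big(\Omega_s\frac{dT_s^-}{dt}\big)\big((R_{s'}^-)^n - (R_s^-)^n\big) \in \sL^1(\C H)$, with trace norm converging to that of $\Omega_s\big(\frac{dT_s^-}{dt}(R_s^-)^n\big)$ --- which yields the $K_1$-continuity of $s\mapsto a_4(s)\ot b_4(s)$ and completes the proof that $J(\ga_2) \in K_1$.

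The hard part is exactly this last round of trace-norm bookkeeping: getting $a_2(s)$ to be trace class uses the explicit form of $\om_n$ together with iterated application of $R_t^- T_t^+ = T_t^+ R_t^+$ against the hypotheses, and the mixed term $i = 4$ forces one to control off-diagonal products $a_4(s)b_4(s')$ where neither tensor slot is individually trace class, relying on the fact that the relevant trace-class functions have trace-norm-small increments. A subsidiary point I would spell out is the compatibility of the $K_1$-valued Bochner integral with the element $J(\ga_2) \in \sL(\C H)\hot\sL(\C H)$ of \eqref{eq:defjay} under the canonical contraction $K_1 \to C_1^{\T{top}}(\sL(\C H))$ intertwining $b$, since that is what licenses reading the identity $\xi = -b(J(\ga_2))$ inside $K_0 = \sL^1(\C H)$ and hence concluding both the triviality of $[\xi]$ in $H_0^\la(K)$ and the equality of the homological indices.
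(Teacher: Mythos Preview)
Your proposal is correct and follows essentially the same approach as the paper: reduce via Theorem~\ref{t:hominv} to showing $J(\ga_2)\in K_1$, compute $J(\ga_2)$ explicitly as the four integrals, and verify that each integrand is a continuous map $[0,1]\to K_1$ using the hypotheses together with the algebraic identities of Lemma~\ref{l:baside} (notably $(R_t^-)^n\Omega_t=\Omega_t(R_t^+)^n$) and the expansion of $(R_t^-)^n\frac{d\Omega_t}{dt}$.

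One comparison worth recording: for the fourth integrand, where neither tensor slot is individually trace class, the paper simply asserts that trace-norm continuity of the two product maps $t\mapsto a_4(t)b_4(t)$ and $t\mapsto b_4(t)a_4(t)$ suffices, while you explicitly control the off-diagonal products $a_4(s)b_4(s')$ via the trace-class increments $(R_s^-)^n-(R_{s'}^-)^n=\int_{s'}^s\frac{d(R_t^-)^n}{dt}\,dt$. Your version makes transparent why the decomposition $a(s)\otimes b(s)-a(s')\otimes b(s')=a(s)\otimes(b(s)-b(s'))+(a(s)-a(s'))\otimes b(s')$ actually lands in $\C K_1$, a point the paper leaves implicit; the extra bookkeeping you flag is genuine but routine once the increment observation is in hand.
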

\begin{proof}
Recall from \eqref{eq:defjay} that the chain $J(\ga_2) \in C_1^{\T{top}}(\sL(\C H))$ is given explicitly by the formula
\begin{equation}\label{eq:jayexp}
\begin{split}
J(\ga_2) & = -\int_0^1 (R_t^+)^n \cd \frac{d T_t^-}{dt} \ot \Om_t \, dt
+ \int_0^1 (R_t^-)^n \cd \frac{d \Om_t}{dt} \ot T_t^- \, dt \\
& \q - \int_0^1 \Om_t \cd \frac{d((R_t^+)^n)}{dt} \ot T_t^- \, dt
+ \int_0^1 \Om_t \cd \frac{d(T_t^-)}{dt} \ot (R_t^-)^n \, dt
\end{split}
\end{equation}
where $\Om_t := T_t^+ + R_t^- T_t^+ \plp (R_t^-)^{n-1} T_t^+ = T_t^+ + T_t^+ R_t^+ \plp T_t^+ (R_t^+)^{n-1}$ for all $t \in [0,1]$.

By Theorem \ref{t:hominv} it suffices to show that $J(\ga_2)$ determines an element in the Banach space $K_1$ defined in Section \ref{s:tracoc}. Thus, we need to show that each of the integrands in \eqref{eq:jayexp} define continuous maps $[0,1] \to \C K_1$. By the definition of the norm $\|\cd \|_1 : \C K_1 \to [0,\infty)$ it suffices to show that the following eight maps
\[
\begin{split}
t\mapsto  (R_t^+)^n \cd \frac{d T_t^-}{dt} \cd \Om_t & \qq t\mapsto  \Om_t \cd (R_t^+)^n \cd \frac{d T_t^-}{dt} \\
t \mapsto (R_t^-)^n \cd \frac{d \Om_t}{dt} \cd T_t^- & \qq t\mapsto T_t^- \cd (R_t^-)^n \cd \frac{d \Om_t}{dt} \\
t \mapsto \Om_t \cd \frac{d\big((R_t^+)^n\big)}{dt} \cd T_t^- & \qq t \mapsto T_t^- \cd \Om_t \cd \frac{d\big((R_t^+)^n\big)}{dt} \\
t \mapsto \Om_t \cd \frac{d(T_t^-)}{dt} \cd (R_t^-)^n & \qq t \mapsto (R_t^-)^n \cd \Om_t \cd \frac{d(T_t^-)}{dt}
\end{split}
\]
are continuous in trace norm, see Definition \ref{d:tranorcha}. This is easily seen to be implied by the assumptions of our theorem for path $1$, $2$, $5$, $6$ and $7$. To see that path $8$ is continuous in trace norm we simply remark that $(R_t^-)^n \cd \Om_t = \Om_t \cd (R_t^+)^n$. To see that path $3$ and $4$ are continuous in trace norm it is enough to show that the path $t \mapsto (R_t^-)^n \cd \frac{d \Om_t}{dt}$ is continuous in trace norm.

To this end, we remark that
\[
\begin{split}
(R_t^-)^n \cd \frac{d \Om_t}{dt} & = \big(1 + R_t^- \plp (R_t^-)^{n-1} \big)\cd (R_t^-)^n \cd \frac{d(T_t^+)}{dt} \\
& \qq + (R_t^-)^n \cd \Big( \frac{d(R_t^-)}{dt}  \plp \frac{d\big( (R_t^-)^{n-1}\big)}{dt}\Big) \cd T_t^+
\end{split}
\]
for all $t \in [0,1]$. It therefore suffices to show that the path $t \mapsto (R_t^-)^n \cd \frac{d(R_t^-)}{dt}$ is continuous in trace norm. But this follows since
\[
\begin{split}
(R_t^-)^n \cd \frac{d(R_t^-)}{dt} & = - (R_t^-)^n \cd \frac{d(T_t^+)}{dt} \cd T_t^- -
(R_t^-)^n \cd T_t^+ \cd \frac{d(T_t^-)}{dt} \\
& = - (R_t^-)^n \cd \frac{d(T_t^+)}{dt} \cd T_t^- -
T_t^+ \cd (R_t^+)^n \cd \frac{d(T_t^-)}{dt}
\end{split}
\]
for all $t \in [0,1]$.
\end{proof}

\section{Homological indices of unbounded operators}\label{s:homindunb}
The interesting examples of the homological index arise from differential operators. In this Section we formulate
a general framework that accommodates the unbounded operators for which we will be able to apply our invariance arguments of the previous Section.
Consider a closed densely defined operator $\C D^+ : \T{Dom}(\C D^+) \to \C H$ on a Hilbert space $\C H$. Let $\C D^- := (\C D^+)^* : \T{Dom}(\C D^-) \to \C H$ denote the adjoint of $\C D^+$. {\color{black} We now introduce an unbounded version of Definition \ref{d:homindbou}.

\begin{dfn}\label{d:homindunb}
Let $n \in \nn$. We will say that the \emph{homological index of $\C D^+$ exists in degree $n$} when the bounded operator
\[
(1 + \C D^- \C D^+)^{-n} - (1 + \C D^+ \C D^-)^{-n} : \C H \to \C H
\]
is of trace class. In this case, the \emph{homological index of $\C D^+$ in degree $n$} is defined by
\[
\T{H-Ind}_n(\C D^+) := \T{Tr}\Big( (1 + \C D^- \C D^+)^{-n} - (1 + \C D^+ \C D^-)^{-n} \Big)
\]
\end{dfn}

The link between the unbounded and the bounded version of the homological index can be explained as follows.
%
%\begin{assu}\label{a:unbcon}
%\begin{enumerate} 
%\item The domains $\T{Dom}(\C D^- \C D^+)$ and $\T{Dom}(\C D^+ \C D^-)$ coincide and the self commutator $\C D^+ \C D^- - \C D^- \C D^+ : \T{Dom}(\C D^- \C D^+) \to \C H$ extends to a bounded operator $F : \C H \to \C H$.
%\item There exists an $n \in \nn$ and a $z_0 \in \cc\sem (-\infty,0]$ such that $(z_0 + \C D^+ \C D^-)^{-i-1} F (z_0 + \C D^- \C D^+)^{-n+i} \in \sL^1(\C H)$ for all $i \in \{0,\ldots,n-1\}$
%\end{enumerate}
%\end{assu}
%
%As a consequence of these assumptions we obtain that the difference $(z + \C D^- \C D^+)^{-n} - (z + \C D^+ \C D^-)^{-n}$ is of trace class for all $z \in \cc\sem (-\infty,0]$. Furthermore, the map
%\[
%z \mapsto (z + \C D^- \C D^+)^{-n} - (z + \C D^+ \C D^-)^{-n} \q \cc\sem (-\infty,0] \to \sL^1(\C H)
%\]
%is holomorphic in trace norm. A detailed proof of this result was given in \cite[Section 3]{CaGrKa:AOS}.
Define the bounded operator $$T^+ := \C D^+(1 + \C D^- \C D^+)^{-1/2} : \C H \to \C H$$ and notice that
\[
1 - T^+(T^+)^* = (1 + \C D^+ \C D^-)^{-1} \q \T{and} \q 1 - (T^+)^* T^+ = (1 + \C D^-\C D^+)^{-1}
\]
It therefore follows that the homological index of $T^+$ in degree $n$ exists if and only if the homological index of $\C D^+$ exists in degree $n$. Furthermore, we have the identities
\[
\begin{split}
\T{H-Ind}_n(\C D^+) & = \T{Tr}\Big( (1 + \C D^- \C D^+)^{-n} - (1 + \C D^+ \C D^-)^{-n} \Big) \\
& = \T{Tr}\big( (1 - (T^+)^*T^+)^n - (1 - T^+ (T^+)^*)^n \big) = \T{H-Ind}_n(T^+)
\end{split}
\]
}
We are interested in studying the invariance of the homological index of $\C D^+$ under a relevant class of unbounded perturbations. More precisely, the main aim of the next two Sections is to find a good class of closed unbounded operators $B^+ : \T{Dom}(B^+) \to \C H$ such that the homological index of $\C D^+ + B^+$ exists \emph{and} coincides with the homological index of $\C D^+$.

We have collected the results which we need about perturbations of unbounded operators in an Appendix to this paper.

%In particular, it follows by Proposition \ref{p:} that $\C D^+ + B^+ : \T{Dom}(\C D^+) \to \C H$ is closed and that the adjoint is given by $(\C D^+ + B^+)^* = \C D^- + B^- : \T{Dom}(\C D^-) \to \C H$. The closed unbounded operator $\C D^+ + B^+ : \T{Dom}(\C D^+) \to \C H$ is referred to as the \emph{almost bounded perturbation} of $\C D^+$ by $B^+$.
%}

\section{Existence of the unbounded homological index}\label{s:exiunbhom}
Let $\C D^+ : \T{Dom}(\C D^+) \to \C H$ and $B^+ : \T{Dom}(B^+) \to \C H$ be closed unbounded operators. The Hilbert space adjoints of $\C D^+$ and of $B^+$ are denoted by $\C D^- := (\C D^+)^* : \T{Dom}\big((\C D^+)^* \big) \to \C H$ and $B^- := (\C B^+)^* : \T{Dom}\big((B^+)^* \big) \to \C H$.

We will apply the notation
\[
\begin{split}
\C D & := \ma{cc}{0 & \C D^- \\ \C D^+ & 0} : \T{Dom}(\C D^+) \op \T{Dom}(\C D^-) \to \C H \op \C H \q \T{and} \\
B & := \ma{cc}{0 & B^- \\ B^+ & 0} : \T{Dom}(B^+) \op \T{Dom}(B^-) \to \C H \op \C H
\end{split}
\]
for the selfadjoint unbounded operators associated with $\C D^+$ and $B^+$.
 
Suppose now that $\T{Dom}(\C D^+) \su \T{Dom}(B^+)$ and that $\T{Dom}(\C D^-) \su \T{Dom}(B^-)$. It then follows by Lemma \ref{l:clopat} that the unbounded operator $\C D^+ + t \cd B^+ : \T{Dom}(\C D^+) \to \C H$ is closable for all $t \in [0,1]$. The closures will be denoted by 
\[
\C D^+_t := \ov{\C D^+ + t\cd B^+} : \T{Dom}(\C D^+_t) \to \C H
\]
We will apply the notation $\C D^-_t := (\C D^+_t)^* : \T{Dom}\big((\C D^+_t)^*\big) \to \C H$ for the adjoints.

Introduce the notation
\[
\begin{split}
& \C D_t := \ma{cc}{0 & \C D_t^- \\ \C D_t^+ & 0} : \T{Dom}(\C D_t^+) \op \T{Dom}(\C D_t^-) \to \C H \op \C H \q \T{and} \\
& \De_t := \ma{cc}{\De_t^+ & 0 \\ 0 & \De_t^-} := \C D_t^2 = \ma{cc}{\C D^-_t \C D^+_t & 0 \\ 0 & \C D^+_t \C D^-_t}
\end{split}
\]
for all $t \in [0,1]$. We remark that $\C D_t$ is selfadjoint and that $\De_t$ is positive and selfadjoint for all $t \in [0,1]$.

The \emph{bounded transform} of $\C D^+_t$ is then defined as the bounded operator $T^+_t := \C D^+_t \big(1 + \De^+_t\big)^{-1/2}$. The adjoint of $T^+_t : \C H \to \C H$ agrees with the bounded transform of $\C D^-_t$, thus $T^-_t := (T^+_t)^* = \C D^-_t \big(1 + \De^-_t \big)^{-1/2}$. The \emph{resolvent} of $\De_t$ is the bounded operator defined by $R_t := (1 + \De_t)^{-1} : \C H \op \C H \to \C H \op \C H$.
\bigskip

\emph{The following Assumption will remain in effect throughout this Section:}

\begin{assu}\label{a:patrelmai}
Suppose that the following holds:
\begin{enumerate}
\item $\T{Dom}(\C D^+) \su \T{Dom}(B^+)$ and $\T{Dom}(\C D^-) \su \T{Dom}(B^-)$.
\item There exists a dense subspace $\C E \su \C H \op \C H$ such that $(i + \C D_t)^{-1}(\xi) \in \T{Dom}(B)$ for all $\xi \in \C E$ and all $t \in [0,1]$.
\item The unbounded operator $B \cd (i + \C D_t)^{-1} : \C E \to \C H \op \C H$ extends to a bounded operator $X_t : \C H \op \C H \to \C H \op \C H$ for all $t \in [0,1]$.
\item There exists a $K > 0$ such that $\| X_t \| \leq K$ for all $t \in [0,1]$.
\end{enumerate}
\end{assu}

We remark that the conditions in the above Assumption summarizes the conditions in Assumption \ref{a:patrelbouI} and Assumption \ref{a:patrelbouII} in the Appendix. In particular, it follows by Proposition \ref{p:dompat} that $\T{Dom}(\C D_t) = \T{Dom}(\C D)$ for all $t \in [0,1]$.
\bigskip

Our main interest is now to find relevant conditions on the selfadjoint unbounded operator $B$ such that the homological index for $\C D^+ : \T{Dom}(\C D^+) \to \C H$ exists if and only if the homological index for $\C D^+ + B^+ : \T{Dom}(\C D^+) \to \C H$ exists. This will be carried out by studying the difference of powers of resolvents $R_t^n - R_s^n$ for $n \in \nn$ and $t,s \in [0,1]$. To reach our main result of this section, we need to prove a few preliminary Lemmas. Our first Lemma relies on the resolvent identity: 

\begin{lemma}\label{l:resexpa}
Let $N \in \nn$ be a positive integer such that $N > \sup_{t \in [0,1]} \| X_t \|$. Let $t,s \in [0,1]$ with $|t-s| \leq 1/N$. We then have the identity
\[
R_t = \big(1 + (t-s) X_s^* \big)^{-1} \cd R_s \cd \big(1 + (t - s) X_s\big)^{-1}
\]
\end{lemma}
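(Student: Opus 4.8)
The identity to prove is
\[
R_t = \big(1 + (t-s) X_s^* \big)^{-1} \cd R_s \cd \big(1 + (t - s) X_s\big)^{-1},
\]
where $R_t = (1+\C D_t^2)^{-1} = (i+\C D_t)^{-1}(-i+\C D_t)^{-1}$ since $\C D_t$ is selfadjoint. So the first move is to factor $R_t$ through resolvents of $\C D_t$ at $\pm i$, and likewise for $R_s$, reducing everything to a statement comparing $(i + \C D_t)^{-1}$ with $(i+\C D_s)^{-1}$ and $X_s$. Concretely, I would aim to prove the two ``square root'' identities
\[
(i + \C D_t)^{-1} = (i + \C D_s)^{-1}\big(1 + (t-s)X_s\big)^{-1}
\q\T{and}\q
(-i + \C D_t)^{-1} = \big(1 + (t-s)X_s^*\big)^{-1}(-i + \C D_s)^{-1},
\]
and then multiply them together (in the correct order) to get the claimed formula for $R_t$.

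\medskip

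\noindent\textbf{Key steps.} First I would check that $1 + (t-s)X_s$ is boundedly invertible: since $|t-s|\le 1/N$ and $N > \sup_t \|X_t\| \ge \|X_s\|$, we have $\|(t-s)X_s\| < 1$, so the Neumann series converges. Next, the heart of the matter is the resolvent-type identity
\[
(i + \C D_t)^{-1} = (i + \C D_s)^{-1}\big(1 + (t-s)X_s\big)^{-1}.
\]
To see this, multiply on the left by $(i+\C D_s)$ and on the right by $(1 + (t-s)X_s)(i+\C D_t)$; it suffices to show
\[
(i + \C D_s)\big(1 + (t-s)X_s\big)^{-1}(i+\C D_t)^{-1}\cdot(1 + (t-s)X_s)(i+\C D_t) = (i+\C D_t),
\]
or, rearranged, that on a suitable dense domain
\[
(1 + (t-s)X_s)(i + \C D_t) = (i + \C D_s)\qquad\text{equivalently}\qquad (t-s)X_s(i+\C D_t) = \C D_t - \C D_s.
\]
Now, by Assumption \ref{a:patrelmai} and Proposition \ref{p:dompat}, $\T{Dom}(\C D_t) = \T{Dom}(\C D)$ for all $t$, and on this common domain $\C D_t - \C D_s = (t-s)B$ (this is how the closures $\C D_t^+ = \ov{\C D^+ + tB^+}$ were defined). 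On the dense subspace $\C E$ (pushed into the domain via $(i+\C D_t)^{-1}$), $X_s = B(i+\C D_s)^{-1}$ — wait, more carefully: $X_s$ is the bounded extension of $B(i+\C D_s)^{-1}$, and one checks $X_s(i+\C D_t)\xi = B\xi$ for $\xi$ in a core, using that $\C D_s$ and $\C D_t$ share the domain $\T{Dom}(\C D)$. Hence $(t-s)X_s(i+\C D_t)\xi = (t-s)B\xi = (\C D_t-\C D_s)\xi$ on that core, and a closure/density argument upgrades this to the operator identity needed. The adjoint identity follows by taking adjoints (noting $\C D_t$, $\C D_s$ selfadjoint, so $(-i+\C D_t)^{-1} = ((i+\C D_t)^{-1})^*$, and $(B(i+\C D_s)^{-1})^* \supseteq (-i+\C D_s)^{-1}B^*$ extends to $X_s^*$). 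Finally, multiplying:
\[
R_t = (i+\C D_t)^{-1}(-i+\C D_t)^{-1}
= (i+\C D_s)^{-1}(1+(t-s)X_s)^{-1}\,(1+(t-s)X_s^*)^{-1}(-i+\C D_s)^{-1}.
\]
Here a small reordering is needed — the two middle factors must be assembled so that $(1+(t-s)X_s^*)^{-1}$ ends up on the \emph{outer left} and $(1+(t-s)X_s)^{-1}$ on the outer right. The cleanest route is to instead derive $(-i+\C D_t)^{-1} = (1+(t-s)X_s^*)^{-1}(-i+\C D_s)^{-1}$ first, write $R_t = (-i+\C D_t)^{-1}(i+\C D_t)^{-1}$ using commutativity of the two resolvents of the selfadjoint $\C D_t$, and then substitute both factored forms to land exactly on
\[
R_t = \big(1+(t-s)X_s^*\big)^{-1} R_s \big(1+(t-s)X_s\big)^{-1}.
\]

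\medskip

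\noindent\textbf{Main obstacle.} The routine parts are the Neumann-series invertibility and the bookkeeping of which factor sits where. The genuinely delicate point is the domain juggling in the identity $(t-s)X_s(i+\C D_t) = \C D_t - \C D_s$: $X_s$ is only the bounded \emph{extension} of $B(i+\C D_s)^{-1}$, so one must be careful that $(i+\C D_t)^{-1}$ maps enough vectors into $\T{Dom}(B)$ and that the equality $\C D_t - \C D_s = (t-s)B$ — which holds on $\T{Dom}(\C D)$ before closure — survives when composed with resolvents. The reference to Lemma \ref{l:resexpa}'s phrasing ``relies on the resolvent identity'' suggests the authors indeed reduce it to $(i+\C D_t)^{-1} - (i+\C D_s)^{-1} = -(t-s)(i+\C D_s)^{-1}B(i+\C D_t)^{-1} = -(t-s)X_s(i+\C D_t)^{-1}$, i.e. the second resolvent identity for the perturbation $\C D_t = \C D_s + (t-s)B$, and then solve algebraically for $(i+\C D_t)^{-1}$; that is the approach I would follow, with the Appendix lemmas (\ref{l:clopat}, \ref{p:dompat}) supplying the needed domain facts.
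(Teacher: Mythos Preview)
Your approach is essentially the paper's own: factor $R_t = (-i+\C D_t)^{-1}(i+\C D_t)^{-1}$, use the second resolvent identity to get $(i+\C D_t)^{-1} = (i+\C D_s)^{-1}\big(1+(t-s)X_s\big)^{-1}$ (after checking $\|(t-s)X_s\|<1$ so the Neumann inverse exists), take adjoints for the other factor, and multiply. The paper compresses all of your domain worries into a single citation of Lemma~\ref{l:resideII}, which already records the identity $(i+\C D_t)^{-1}-(i+\C D_s)^{-1} = -(t-s)(i+\C D_t)^{-1}X_s$ with the domain bookkeeping done in the Appendix; you would save yourself the core/closure discussion by invoking that lemma directly.

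One small slip to fix: in your last paragraph you write the resolvent difference as $-(t-s)(i+\C D_s)^{-1}B(i+\C D_t)^{-1}$ and then identify this with $-(t-s)X_s(i+\C D_t)^{-1}$. But $X_s$ is the bounded extension of $B(i+\C D_s)^{-1}$, not of $(i+\C D_s)^{-1}B$, so the correct version (and the one the paper uses) places $(i+\C D_t)^{-1}$ on the \emph{left}: $(i+\C D_t)^{-1}-(i+\C D_s)^{-1} = -(t-s)(i+\C D_t)^{-1}X_s$. Rearranging this gives $(i+\C D_s)^{-1} = (i+\C D_t)^{-1}\big(1+(t-s)X_s\big)$, and inverting yields exactly your target identity.
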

\begin{proof}
By the resolvent identity in Lemma \ref{l:resideII}, we have that
\[
(i + \C D_t)^{-1} - (i + \C D_s)^{-1} =  - (i + \C D_t)^{-1} \cd (t-s) \cd X_s
\]
This is equivalent to the identity
\[
(i + \C D_s)^{-1} = (i + \C D_t)^{-1} \cd \big( 1 + (t-s) \cd X_s \big)
\]
Now, since $|t -s|\cd \|X_s \| < 1$ we obtain that
\[
(i + \C D_t)^{-1} = (i + \C D_s)^{-1} \cd \big(1 + (t -s)\cd X_s \big)^{-1}
\]
This implies that 
\[
R_t = (-i+ \C D_t)^{-1} (i + \C D_t)^{-1} =  \big(1 + (t -s)\cd X_s^* \big)^{-1} \cd R_s \cd \big(1 + (t -s)\cd X_s \big)^{-1}
\]
which is the desired identity.
\end{proof}

The identity obtained in the last Lemma plays a central role in the proof of the next Lemma which lies at the technical core of the present Section.

\begin{lemma}\label{l:tecsum}
Let $N \in \nn$ be a positive integer with {\color{black} $N > \sup_{t \in [0,1]} \| X_t \|$, let {\color{black} $s \in [0,1]$} and let $n \in \nn$.} Suppose that
$
R_s^j \cd  X_s \cd R_s^k \in \sL^{n/(j+k)}(\C H \op \C H)
$
for all $j,k \in \{0,\ldots,n\}$ with $1 \leq j + k \leq n$. Then the assignment
$
t \mapsto R_s^j \cd (R_t^m - R_s^m) \cd R_s^k
$
determines a continuous map {\color{black} $[s -1/N,s+ 1/N] \cap [0,1] \to \sL^{n/(m+ j +k)}(\C H \op \C H)$} for all $m \in \{1,\ldots,n\}$ and all $j,k \in \{0,\ldots,n-m\}$ with $0 \leq j+k \leq n-m$.
\end{lemma}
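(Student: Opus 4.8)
\medskip
\noindent\emph{Proof strategy.} The whole argument takes place on $[s-1/N,s+1/N]\cap[0,1]$, where $|t-s|\cd\|X_s\|<1$, so Lemma \ref{l:resexpa} applies and gives, with $u:=t-s$,
\[
R_t=A_tR_sC_t,\qquad A_t:=(1+uX_s^*)^{-1}=\sum_{i\geq0}(-uX_s^*)^i,\qquad C_t:=(1+uX_s)^{-1}=\sum_{i\geq0}(-uX_s)^i,
\]
the Neumann series converging in norm. Hence $R_t^m=A_tR_s(C_tA_tR_s)^{m-1}C_t$: a product of $m$ copies of $R_s$ with the bounded operators $A_t$, $C_tA_t$ ($m-1$ times) and $C_t$ inserted in the $m+1$ ``slots'' between and around them. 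The plan is to expand this product, bound the pieces by H\"older's inequality for Schatten norms, and then verify absolute convergence and continuity.

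\smallskip
\noindent First I would expand every slot operator as $1+(\text{correction})$ and multiply out; the ``all ones'' term equals $R_s^m$ and cancels against $-R_s^m$, so that after multiplying by $R_s^j$ on the left and $R_s^k$ on the right and deleting the trivial slots, $R_s^j(R_t^m-R_s^m)R_s^k$ becomes a \emph{finite} sum of terms
\[
R_s^{j+e_0}\,\delta_{i_1}\,R_s^{e_1}\,\delta_{i_2}\cdots\delta_{i_p}\,R_s^{e_p+k},\qquad p\geq1,\ \ e_0+\cdots+e_p=m,\ \ e_l\geq1\ (1\leq l\leq p-1),
\]
with each $\delta_{i_l}$ equal to one of $A_t-1$, $C_t-1$, $C_tA_t-1$. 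The decisive observation is that each of these corrections is itself a norm-convergent series of \emph{words of length} $\geq1$ in $X_s,X_s^*$, a word $W$ carrying a coefficient of modulus $\leq|u|^{|W|}$; indeed $A_t-1=\sum_{i\geq1}(-u)^iX_s^{*i}$, $C_t-1=\sum_{i\geq1}(-u)^iX_s^i$, and $C_tA_t-1=\sum_{(i,i')\neq(0,0)}(-u)^{i+i'}X_s^iX_s^{*i'}$. Substituting these, each of the finitely many terms above becomes a norm-convergent series of ``generalized words''
\[
R_s^{a_0}\,W_1\,R_s^{a_1}\,W_2\cdots W_p\,R_s^{a_p},\qquad a_0=j+e_0,\ \ a_l=e_l\ (1\leq l\leq p-1),\ \ a_p=e_p+k,
\]
where the $W_l$ are nonempty words in $X_s,X_s^*$, $a_0+\cdots+a_p=m+j+k$, and the scalar coefficient has modulus $\leq|u|^{|W_1|+\cdots+|W_p|}$.

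\smallskip
\noindent Next I would estimate a single generalized word in $\sL^{n/(m+j+k)}(\C H\op\C H)$ --- which is a Banach space since $1\leq m\leq m+j+k\leq n$, so $1\leq n/(m+j+k)\leq n$. For a nonempty word $W=Z_1\cdots Z_{|W|}$ ($Z_r\in\{X_s,X_s^*\}$) and $a,b\geq0$ with $a+b\leq n$, one has $R_s^aWR_s^b\in\sL^{n/(a+b)}(\C H\op\C H)$: when $|W|=1$ this is the hypothesis, or --- via $(R_s^{j'}X_sR_s^{k'})^*=R_s^{k'}X_s^*R_s^{j'}$ --- its adjoint; when $|W|\geq2$ one ``peels'' the extreme letters, $R_s^aWR_s^b=(R_s^aZ_1)\cd(Z_2\cdots Z_{|W|-1})\cd(Z_{|W|}R_s^b)$, invoking the hypothesis for $R_s^aZ_1\in\sL^{n/a}$ and $Z_{|W|}R_s^b\in\sL^{n/b}$ with the middle factor merely bounded; in all cases the $\sL^{n/(a+b)}$-norm is controlled by $\|X_s\|^{|W|}$ times a fixed power of $\max(1,\|X_s\|^{-1})$ and a combinatorial factor $\leq|W|+1$. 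Writing a generalized word as the product $\big(R_s^{a_0}W_1R_s^{a_1}\big)\big(W_2R_s^{a_2}\big)\cdots\big(W_pR_s^{a_p}\big)$ and noting that all the exponents occurring (in particular $a_0+a_1\leq j+m$) are $\leq m+j+k\leq n$, H\"older's inequality places the product in $\sL^{n/(a_0+\cdots+a_p)}=\sL^{n/(m+j+k)}(\C H\op\C H)$, with norm at most the product of the factor norms. Multiplying these bounds by the coefficient bound $|u|^{|W_1|+\cdots+|W_p|}$ and summing over all words --- polynomially many of each length at each of the $p$ factors --- gives, for each of the finitely many configurations of $p$ and occupied slots, a finite product of numerical series whose term after $\ell$ letters is $O\big((\ell+1)(|u|\cd\|X_s\|)^{\ell}\big)$; these converge because $|u|\cd\|X_s\|<1$ (the case $X_s=0$, in which $R_t\equiv R_s$ and the statement is trivial, being excluded), and their sums are dominated uniformly for $|u|\leq1/N$ by their values at $|u|=1/N$. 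Thus $R_s^j(R_t^m-R_s^m)R_s^k$ is an absolutely convergent series in $\sL^{n/(m+j+k)}(\C H\op\C H)$, each of whose terms is a scalar monomial in $u=t-s$ times a fixed Schatten-class operator and hence a continuous function of $t$; being a uniform limit of such functions, $t\mapsto R_s^j(R_t^m-R_s^m)R_s^k$ is continuous on $[s-1/N,s+1/N]\cap[0,1]$ into $\sL^{n/(m+j+k)}(\C H\op\C H)$, as claimed.

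\smallskip
\noindent The genuinely delicate point --- and what I expect to be the main obstacle --- is the one just worked around: after multiplying out, the bounded operators $A_t,C_t,C_tA_t$ sit \emph{between} powers of $R_s$ and the $X_s$-factors, and for a general bounded $G$ the product $R_s^a\cd G\cd X_s\cd R_s^b$ need not lie in $\sL^{n/(a+b)}$, so one cannot absorb $G$ into a neighbouring Schatten factor. The resolution is that $A_t,C_t,C_tA_t$ are themselves power series in $X_s,X_s^*$, so they fuse with the adjacent $X_s$-factors into longer words to which the peeling estimate applies; and what makes the bound close uniformly on the \emph{full} interval $|t-s|\leq1/N$ --- rather than only on some smaller interval depending on the hypothesised Schatten norms --- is that every extra letter of a word is accompanied by a factor $u=t-s$, so the resulting series converges geometrically with ratio $|t-s|\cd\|X_s\|<1$.
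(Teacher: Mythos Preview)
Your argument is correct but takes a genuinely different route from the paper's. The paper argues by induction on $m$. For the base case it writes $R_t-R_s=(s-t)\big(Y_t^*X_s^*R_sY_t+R_sX_sY_t\big)$ with $Y_t=(1+(t-s)X_s)^{-1}$, and then resolves exactly the ``delicate point'' you identify --- a bounded $Y_t$ trapped between $R_s$-powers and an $X_s$ --- by invoking the single algebraic identity $Y_t-1=(s-t)Y_tX_s$ \emph{once}: for instance $R_s^{j+1}X_sY_tR_s^k=R_s^{j+1}X_sR_s^k+(s-t)\,R_s^{j+1}X_s\cdot Y_t\cdot X_sR_s^k$, so that $Y_t$ is now sandwiched between two Schatten factors and H\"older applies directly without expanding $Y_t$. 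The inductive step then telescopes $R_t^{m+1}-R_s^{m+1}$ into three pieces covered by the $m=1$ case and the induction hypothesis. Your approach dispenses with induction by expanding every $Y_t$, $Y_t^*$ fully as a Neumann series and estimating the resulting power series in $u=t-s$ term by term. What this buys you is an explicit analytic representation of $t\mapsto R_s^j(R_t^m-R_s^m)R_s^k$ on the interval; what the paper's approach buys is a much shorter argument with no infinite-series bookkeeping, since one application of $Y_t-1=(s-t)Y_tX_s$ already produces the required Schatten sandwich. Both methods rest on the same underlying observation --- that the obstructing bounded operator is itself built from $X_s$ --- but the paper exploits it algebraically while you exploit it by full expansion. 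A minor remark: the ``combinatorial factor $\leq|W|+1$'' in your peeling estimate is unnecessary (the norm of $R_s^aWR_s^b$ is already $O(\|X_s\|^{|W|-2})$ for $|W|\geq2$); the polynomial multiplicity enters only when counting words of a given length in $C_tA_t-1$, as you correctly do later.
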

\begin{proof}
To ease the notation, let $Y_t := \big( 1 + (t-s) X_s\big)^{-1}$ for all {\color{black} $t \in [s -1/N,s+ 1/N] \cap [0,1]$. Remark that $t \mapsto Y_t$ defines a continuous path $[s - 1/N, s + 1/N] \cap [0,1] \to \sL(\C H \op \C H)$. Notice furthermore that}
\[
Y_t - 1 = (s - t) \cd X_s \cd Y_t = (s - t) \cd Y_t \cd X_s
\]
The proof now runs by induction on $m \in \{1,\ldots,n\}$.

To begin the induction we use Lemma \ref{l:resexpa} to obtain
\begin{equation}\label{eq:resadv}
\begin{split}
R_t - R_s 
& = Y_t^* \cd R_s \cd Y_t - R_s \\
& = ( Y_t^* - 1) \cd R_s \cd Y_t + R_s \cd (Y_t - 1) \\
& = (s - t) \cd \big( Y_t^* \cd X_s^* \cd R_s \cd Y_t + R_s \cd X_s \cd Y_t \big)
\end{split}
\end{equation}
for all $t \in [s -1/N,s+ 1/N] \cap [0,1]$.
Let now $j,k \in \{0,\ldots,n-1\}$ with $0 \leq j+k \leq n-1$ and compute as follows,
\begin{equation}\label{eq:eascomhol}
\begin{split}
R_s^{j+1} \cd X_s \cd Y_t \cd R_s^k 
& = R_s^{j+1} \cd X_s \cd R_s^k + R_s^{j+1} \cd X_s \cd (Y_t - 1) \cd R_s^k \\
& = R_s^{j+1} \cd X_s \cd R_s^k + (s - t) \cd R_s^{j+1} \cd X_s \cd Y_t \cd X_s \cd R_s^k
\end{split}
\end{equation}
An application of the H\"older inequality then implies that
$
t \mapsto R_s^{j+1} \cd X_s \cd Y_t \cd R_s^k
$
determines a continuous map $[s - 1/N, s+ 1/N] \cap [0,1] \to \sL^{n/(j+1+k)}(\C H \op \C H)$.
Next, a computation similar to the one given in \eqref{eq:eascomhol} shows that
\[
t \mapsto R_s^j \cd Y_t^* \cd X_s^* \cd R_s \cd Y_t \cd R_s^k
\]
also determines a continuous map $[s - 1/N, s+ 1/N] \cap [0,1] \to \sL^{n/(j+1+k)}(\C H \op \C H)$.

The identity in \eqref{eq:resadv} now entails that
$t \mapsto R_s^j \cd (R_t - R_s) \cd R_s^k$ determines a continuous map $[s - 1/N, s+ 1/N] \cap [0,1] \to \sL^{n/(j+1+k)}(\C H \op \C H)$ for all $j,k \in \{0,\ldots,n-1\}$ with $0 \leq j+k \leq n-1$. This establishes the first step in the induction argument.

Let now $m \in \{1,\ldots,n-1\}$ and suppose that
$
t \mapsto R_s^j \cd (R_t^m - R_s^m) \cd R_s^k
$
determines a continuous map $[s - 1/N , s+ 1/N] \cap [0,1] \to \sL^{n/(m+j+k)}(\C H \op \C H)$ for all $j,k \in \{0,\ldots,n-m\}$ with $0 \leq j+k \leq n-m$.
Consider $j,k \in \{0,\ldots,n-m - 1\}$ with $0 \leq j+k \leq n - m - 1$. We have that
\[
\begin{split}
& R_s^j \cd (R_t^{m+1} - R_s^{m+1}) \cd R_s^k \\
& \q = R_s^j \cd (R_t - R_s) \cd R_t^m \cd R_s^k
+ R_s^{j+1} \cd (R_t^m - R_s^m) \cd R_s^k \\
& \q = R_s^j \cd (R_t - R_s) \cd (R_t^m - R_s^m) \cd R_s^k
+ R_s^j \cd (R_t - R_s) \cd R_s^{m+k} \\
& \qq + R_s^{j+1} \cd (R_t^m - R_s^m) \cd R_s^k
\end{split}
\]
for all $t \in [s - 1/N , s+ 1/N] \cap [0,1]$. An application of the H\"older inequality now implies that
$t \mapsto R_s^j \cd (R_t^{m+1} - R_s^{m+1}) \cd R_s^k$ determines a continuous map from $[s - 1/N , s+ 1/N] \cap [0,1] \to \sL^{n/(m + 1 + j + k)}(\C H \op \C H)$. This proves the induction step.
\end{proof}

In order to make full use of the above Lemma, we need to analyze the conditions a little more. This is carried out in the next Lemma.

\begin{lemma}\label{l:contecsum}
Let $n \in \nn$. The following two statements are equivalent:
\begin{enumerate}
\item There exists an $s_0 \in [0,1]$ such that $R_{s_0}^j \cd  X_{s_0} \cd R_{s_0}^k \in \sL^{n/(j+k)}(\C H \op \C H)$ for all $j,k \in \{0,\ldots,n\}$ with $1 \leq j + k \leq n$.
\item $R_s^j \cd  X_s \cd R_s^k \in \sL^{n/(j+k)}(\C H \op \C H)$ for all $j,k \in \{0,\ldots,n\}$ with $1 \leq j + k \leq n$ and all $s \in [0,1]$.
\end{enumerate}
\end{lemma}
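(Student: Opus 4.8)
The plan is to treat the two directions separately. The implication $(2)\Rightarrow(1)$ is immediate, since $[0,1]$ is nonempty and one may take $s_0$ to be any point of it; so all the content is in $(1)\Rightarrow(2)$, with $n\in\nn$ fixed throughout. I would first fix an integer $N$ with $N>1+\sup_{t\in[0,1]}\|X_t\|$ — such an $N$ exists by Assumption \ref{a:patrelmai}(4) — and let $U\su[0,1]$ be the set of those $s$ for which $R_s^j\cd X_s\cd R_s^k\in\sL^{n/(j+k)}(\C H\op\C H)$ for all $j,k\in\{0,\ldots,n\}$ with $1\le j+k\le n$. By hypothesis $(1)$, $U\neq\emptyset$. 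The plan is to prove the \emph{key claim}: if $s_0\in U$ and $t\in[0,1]$ with $|t-s_0|\le 1/N$, then $t\in U$. Because the radius $1/N$ here is uniform, the key claim shows simultaneously that $U$ is open in $[0,1]$ and that its complement is open, so connectedness of $[0,1]$ together with $U\neq\emptyset$ forces $U=[0,1]$, which is exactly $(2)$.

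To prove the key claim I would fix $s_0\in U$ and $t$ with $|t-s_0|\le 1/N$. Since $\|(t-s_0)X_{s_0}\|<1$, the operator $Y_t:=\big(1+(t-s_0)X_{s_0}\big)^{-1}$ is given by a norm-convergent Neumann series in $X_{s_0}$ (in particular $Y_t$ commutes with $X_{s_0}$), and $Y_t^*=\big(1+(t-s_0)X_{s_0}^*\big)^{-1}$ is the corresponding Neumann series in $X_{s_0}^*$. By Lemma \ref{l:resexpa} one has $R_t=Y_t^*\cd R_{s_0}\cd Y_t$, and the resolvent identity established inside the proof of that lemma gives $(i+\C D_t)^{-1}=(i+\C D_{s_0})^{-1}\cd\big(1+(t-s_0)X_{s_0}\big)^{-1}$, so that
\[
X_t=B(i+\C D_t)^{-1}=B(i+\C D_{s_0})^{-1}\cd Y_t=X_{s_0}\cd Y_t.
\]
Substituting these two identities, together with the Neumann series $Y_t=\sum_{p\ge0}(s_0-t)^pX_{s_0}^p$ and $Y_t^*=\sum_{q\ge0}(s_0-t)^q(X_{s_0}^*)^q$, into $R_t^j\cd X_t\cd R_t^k$ expresses it, for each admissible pair $(j,k)$, as an operator-norm convergent series whose generic term is a word built from exactly $j+k$ factors $R_{s_0}$ and at least one factor taken from $\{X_{s_0},X_{s_0}^*\}$.

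It then remains to show that each such word $W$ lies in $\sL^{n/(j+k)}(\C H\op\C H)$ and that the series converges in the $\sL^{n/(j+k)}$-norm. Since $s_0\in U$ we have $R_{s_0}^a\cd X_{s_0}\cd R_{s_0}^b\in\sL^{n/(a+b)}$ for all $a,b$ with $1\le a+b\le n$, and taking adjoints also $R_{s_0}^a\cd X_{s_0}^*\cd R_{s_0}^b\in\sL^{n/(a+b)}$. I would group the $j+k$ factors $R_{s_0}$ inside $W$ into consecutive runs, each run being flanked — up to bounded factors carrying no $R_{s_0}$ — by at least one factor $X_{s_0}$ or $X_{s_0}^*$; this grouping is always available because, in the Neumann-series expansion, every power of $(s_0-t)$ is accompanied by an additional factor $X_{s_0}$ or $X_{s_0}^*$, so the supply of $X$-factors is never exhausted. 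Applying H\"older's inequality for Schatten norms to the resulting blocks then gives $W\in\sL^{n/(j+k)}$, with $\|W\|_{\sL^{n/(j+k)}}$ bounded by a fixed constant times a power of $\|X_{s_0}\|$ controlled geometrically by the total degree; the choice $N>1+\sup_t\|X_t\|$ makes the series converge absolutely in $\sL^{n/(j+k)}$. Hence $R_t^j\cd X_t\cd R_t^k\in\sL^{n/(j+k)}$ for all admissible $(j,k)$, i.e. $t\in U$, proving the key claim.

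The main obstacle I anticipate is exactly the last bookkeeping step: checking that every word occurring in the expansion admits a grouping of its $R_{s_0}$-factors for which the H\"older exponents add up to $(j+k)/n$ rather than to something weaker (the danger being a run of $R_{s_0}$-factors with no adjacent $X$-factor), together with the uniform norm estimate needed for summability. Everything else — the direction $(2)\Rightarrow(1)$, the connectedness argument, and the passage from $R_t,X_t$ to $R_{s_0},X_{s_0}$ — is either trivial or a direct invocation of Lemma \ref{l:resexpa}.
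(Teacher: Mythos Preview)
Your proposal is correct and takes a genuinely different route from the paper. Both arguments share the same skeleton --- $(2)\Rightarrow(1)$ is trivial, and for $(1)\Rightarrow(2)$ one shows the ``good'' set is both open and closed in $[0,1]$ via a uniform $1/N$-step --- but the local step is handled differently. The paper does \emph{not} expand $Y_t$ as an infinite Neumann series. Instead it invokes the already-proven Lemma~\ref{l:tecsum} (which gives $R_s^m-R_{s_0}^m\in\sL^{n/m}$ for $s$ near $s_0$) together with the \emph{finite} two-step resolvent identity $X_s=X_{s_0}+(s_0-s)X_{s_0}^2+(s_0-s)^2X_{s_0}X_sX_{s_0}$; writing $R_s^j=R_{s_0}^j+(R_s^j-R_{s_0}^j)$ and similarly for $R_s^k$ then yields finitely many terms to which H\"older applies directly, with no combinatorics and no convergence issues. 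Your approach, by contrast, is self-contained --- it never appeals to Lemma~\ref{l:tecsum} --- at the cost of the infinite-series bookkeeping you correctly flag as the main obstacle. That obstacle is genuine but surmountable: the claim you need is that any word in $R_{s_0},X_{s_0},X_{s_0}^*$ with exactly $m\le n$ copies of $R_{s_0}$ and at least one $X$-factor lies in $\sL^{n/m}$, and this follows by a short induction on $m$ (locate the first nonempty block of $X$-factors, peel off one block of the form $R_{s_0}^aZR_{s_0}^b$ with $Z\in\{X_{s_0},X_{s_0}^*\}$, and recurse). The ``danger'' you mention cannot actually occur, since by construction any two distinct runs of $R_{s_0}$'s are separated by at least one $X$-factor and the central $X_{s_0}$ from $X_t=X_{s_0}Y_t$ is always present; the induction also produces the uniform norm bound needed for absolute convergence with your choice of $N$.
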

\begin{proof}
We will only prove that $(1)$ implies $(2)$, the reverse argument being trivial.
Let therefore $s_0 \in [0,1]$ and suppose that $R_{s_0}^j \cd  X_{s_0} \cd R_{s_0}^k \in \sL^{n/(j+k)}(\C H \op \C H)$ for all $j,k \in \{0,\ldots,n\}$ with $1 \leq j + k \leq n$.

Choose a positive integer $N \in \nn$ such that
$
N > \sup_{t \in [0,1]} \| X_t \|.
$
It is then enough to show that $R_s^j \cd  X_s \cd R_s^k \in \sL^{n/(j+k)}(\C H \op \C H)$ for all $j,k \in \{0,\ldots,n\}$ with $1 \leq j + k \leq n$ and all $s \in [s_0 - 1/N,s_0 + 1/N] \cap [0,1]$.
Let therefore $j,k \in \{0,\ldots,n\}$ with $1 \leq j + k \leq n$ and $s \in [s_0 - 1/N,s_0 + 1/N] \cap [0,1]$ be fixed.

Repeated use of the resolvent identity (see Lemma \ref{l:resideIII}) yields that
\begin{equation}\label{eq:resreside}
\begin{split}
X_s & = X_{s_0} + X_s - X_{s_0} \\
& = X_{s_0} + X_s \cd (s_0 - s) \cd X_{s_0} \\
& = X_{s_0} + X_{s_0} \cd (s_0 - s) \cd X_{s_0} + (X_s - X_{s_0}) \cd (s_0 - s) \cd X_{s_0} \\
& = X_{s_0} + X_{s_0} \cd (s_0 - s) \cd X_{s_0} + X_{s_0} \cd (s_0 - s) \cd X_s \cd (s_0 - s) \cd X_{s_0}
\end{split}
\end{equation}
It then follows by Lemma \ref{l:tecsum} and the H\"older inequality that $R_s^j \cd  X_s \cd R_s^k \in \sL^{n/(j+k)}(\C H \op \C H)$.
\end{proof}

We are now ready to prove the main result of this Section. It provides certain summability conditions on the {\color{black} selfadjoint unbounded} operator $B = \ma{cc}{0 & B^- \\ B^+ & 0}$. These conditions entail that the homological index of $\C D^+ + B^+$ exists whenever the homological index of $\C D^+$ exists.
%
%We shall see in the next Section that these conditions are in fact strong enough to imply that the homological indices of $\C D^+$ and $\C D^+ + B^+$ coincide.

\begin{thm}\label{t:exihomind}
Suppose that the closed unbounded operators $\C D^+ : \T{Dom}(\C D^+) \to \C H$ and $B^+ : \T{Dom}(B^+) \to \C H$ satisfy the conditions of Assumption \ref{a:patrelmai}. Let $n \in \nn$ and suppose furthermore that
$R_0^j \cd  X_0 \cd R_0^k \in \sL^{n/(j+k)}(\C H \op \C H)$ for all $j,k \in \{0,\ldots,n\}$ with $1 \leq j + k \leq n$. Then the homological index $\T{H-Ind}_n(\C D^+ + B^+)$ exists if and only if the homological index of $\T{H-Ind}_n(\C D^+)$ exists.
\end{thm}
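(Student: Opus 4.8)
The plan is to reduce everything, via the block decomposition $R_t^n = \T{diag}\big((1+\C D_t^-\C D_t^+)^{-n},\,(1+\C D_t^+\C D_t^-)^{-n}\big)$, to the single assertion that $R_1^n - R_0^n \in \sL^1(\C H \op \C H)$, and then to deduce that assertion by chaining together the local trace-norm continuity supplied by Lemma \ref{l:tecsum}. Indeed, once $R_1^n - R_0^n$ is trace class on $\C H \op \C H$, extracting its two diagonal blocks gives that $(1+\C D_1^-\C D_1^+)^{-n} - (1+\C D^-\C D^+)^{-n}$ and $(1+\C D_1^+\C D_1^-)^{-n} - (1+\C D^+\C D^-)^{-n}$ both lie in $\sL^1(\C H)$ (a compression of a trace-class operator is trace class), and subtracting these two relations isolates the difference appearing in Definition \ref{d:homindunb}.

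First I would globalize the hypothesis. The theorem assumes condition $(1)$ of Lemma \ref{l:contecsum} at the point $s_0 = 0$, so Lemma \ref{l:contecsum} yields condition $(2)$: $R_s^j \cd X_s \cd R_s^k \in \sL^{n/(j+k)}(\C H\op\C H)$ for every $s \in [0,1]$ and all $j,k \in \{0,\ldots,n\}$ with $1 \leq j+k \leq n$. Next, fix a positive integer $N$ with $N > \sup_{t\in[0,1]}\|X_t\|$, which is finite by Assumption \ref{a:patrelmai}(4). For each $s \in [0,1]$ the hypotheses of Lemma \ref{l:tecsum} are met, and applying that lemma with $m = n$, $j = k = 0$ shows that $t \mapsto R_t^n - R_s^n$ is continuous from $[s-1/N,\,s+1/N]\cap[0,1]$ into $\sL^{n/n}(\C H\op\C H) = \sL^1(\C H\op\C H)$; in particular $R_t^n - R_s^n \in \sL^1(\C H\op\C H)$ whenever $|t-s| \leq 1/N$. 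Partitioning $[0,1]$ by $t_i := i/N$, $i = 0,\ldots,N$, and writing $R_1^n - R_0^n = \sum_{i=0}^{N-1}\big(R_{t_{i+1}}^n - R_{t_i}^n\big)$ exhibits $R_1^n - R_0^n$ as a finite sum of trace-class operators, hence $R_1^n - R_0^n \in \sL^1(\C H\op\C H)$.

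Finally I would unwind the block structure as indicated above. Using $\C D_0^+ = \C D^+$ and $\C D_1^+ = \ov{\C D^+ + B^+}$, the two diagonal-block relations combine to give
\[
\big((1+\C D_1^-\C D_1^+)^{-n} - (1+\C D_1^+\C D_1^-)^{-n}\big) - \big((1+\C D^-\C D^+)^{-n} - (1+\C D^+\C D^-)^{-n}\big) \in \sL^1(\C H),
\]
so the first parenthesis is trace class if and only if the second is; by Definition \ref{d:homindunb} this is exactly the statement that $\T{H-Ind}_n(\C D^+ + B^+)$ exists if and only if $\T{H-Ind}_n(\C D^+)$ exists.

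The substance of the argument has already been absorbed into Lemmas \ref{l:resexpa}, \ref{l:tecsum} and \ref{l:contecsum}, so I do not expect a genuinely hard step here. The only points needing care in the write-up are the passage from the local statement ($|t-s|\leq 1/N$) to the global one via a uniform partition, and the elementary but mildly tedious bookkeeping that turns $\sL^1$-membership on $\C H\op\C H$ into $\sL^1$-membership of the two blocks on $\C H$. I would also emphasise that this argument controls only the \emph{existence} of the index; the perturbation of its numerical value is the subject of Section \ref{s:invunbhom}.
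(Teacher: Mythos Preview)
Your argument is correct and matches the paper's approach: the paper deduces the theorem from Proposition \ref{p:resdiflam} (whose proof is declared an ``easy consequence of Lemma \ref{l:contecsum} and Lemma \ref{l:tecsum}''), then extracts the two diagonal blocks exactly as you do. Your globalize-via-\ref{l:contecsum} / localize-via-\ref{l:tecsum} / telescope argument is precisely what lies behind that ``easy consequence,'' the only cosmetic difference being that the paper records the slightly stronger intermediate statement that $t \mapsto R_t^m - R_0^m$ is continuous into $\sL^{n/m}$ for all $m$, whereas you use only the case $m=n$ at the endpoint $t=1$.
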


The above theorem follows immediately from the next (stronger) Proposition. Indeed, this proposition implies that $R_1^n - R_0^n : \C H \op \C H \to \C H \op \C H$ is of trace class. But this means that both of the operators
\[
(1 + \De_1^+)^{-n} - (1 + \De_0^+)^{-n} : \C H \to \C H \q \T{and} \q
(1 + \De_1^-)^{-n} - (1 + \De_0^-)^{-n} : \C H \to \C H
\]
are of trace class. And therefore we must have that
\[
\Big( (1 + \De_1^+)^{-n} - (1 + \De_1^-)^{-n} \in \sL^1(\C H) \Big) 
\lrar \Big( (1 + \De_0^+)^{-n} - (1 + \De_0^-)^{-n} \in \sL^1(\C H) \Big)
\]

\begin{prop}\label{p:resdiflam}
Let $n \in \nn$ and suppose that
\[
R_0^j \cd  X_0 \cd R_0^k \in \sL^{n/(j+k)}(\C H \op \C H)
\]
for all $j,k \in \{0,\ldots,n\}$ with $1 \leq j + k \leq n$. 
Then the assignment
$
t \mapsto R_t^m - R_0^m
$
determines a continuous map $[0,1] \to \sL^{n/m}(\C H \op \C H)$ for all $m \in \{1,\ldots,n\}$. 
%
%Furthermore, there exists a constant $K > 0$ such that
%\[
%\big\| (R_t^\la)^m - (R_0^\la)^m \big\|_{n/m} \leq K.
%\]
%for all $\la \geq C$ and all $t \in [0,1]$.
\end{prop}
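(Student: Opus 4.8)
The plan is to obtain Proposition \ref{p:resdiflam} as a consequence of Lemmas \ref{l:contecsum} and \ref{l:tecsum} by a routine covering-and-telescoping argument; the genuine analytic content is already contained in those two lemmas (which in turn rest on the resolvent expansion of Lemma \ref{l:resexpa}), so what remains is essentially compactness of $[0,1]$ together with some index bookkeeping.

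First I would fix a positive integer $N \in \nn$ with $N > \sup_{t \in [0,1]} \| X_t \|$, which exists by part (4) of Assumption \ref{a:patrelmai}, and set $s_i := i/N$ for $i = 0,1,\ldots,N$. By hypothesis the summability condition $R_0^j \cd X_0 \cd R_0^k \in \sL^{n/(j+k)}(\C H \op \C H)$ holds for all $j,k \in \{0,\ldots,n\}$ with $1 \leq j+k \leq n$; applying Lemma \ref{l:contecsum} upgrades this to $R_s^j \cd X_s \cd R_s^k \in \sL^{n/(j+k)}(\C H \op \C H)$ for every $s \in [0,1]$ and all such $j,k$. In particular, for each $i$ the hypotheses of Lemma \ref{l:tecsum} are satisfied with $s = s_i$.

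Next, fix $m \in \{1,\ldots,n\}$. Applying Lemma \ref{l:tecsum} with $s = s_i$ and $j = k = 0$ (note $0 \leq n - m$ since $m \leq n$) shows that the assignment $t \mapsto R_t^m - R_{s_i}^m$ determines a continuous map $[s_i - 1/N, s_i + 1/N] \cap [0,1] \to \sL^{n/m}(\C H \op \C H)$. In particular, since $s_l - s_{l-1} = 1/N$, each difference $R_{s_l}^m - R_{s_{l-1}}^m$ lies in $\sL^{n/m}(\C H \op \C H)$. For $t \in [s_i, s_{i+1}]$ I would then write the telescoping identity
\[
R_t^m - R_0^m = \big( R_t^m - R_{s_i}^m \big) + \sum_{l=1}^{i} \big( R_{s_l}^m - R_{s_{l-1}}^m \big),
\]
which exhibits $R_t^m - R_0^m$ as an element of $\sL^{n/m}(\C H \op \C H)$ and, via the continuity just established for the first summand, as a continuous function of $t \in [s_i, s_{i+1}]$ with values in $\sL^{n/m}(\C H \op \C H)$. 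A check at the shared endpoint $t = s_{i+1}$ shows that the expression coming from $[s_i, s_{i+1}]$ and the one coming from $[s_{i+1}, s_{i+2}]$ agree (both collapse to $\sum_{l=1}^{i+1}\big( R_{s_l}^m - R_{s_{l-1}}^m \big)$), so the locally defined continuous maps glue to a continuous map $[0,1] \to \sL^{n/m}(\C H \op \C H)$. Since $m \in \{1,\ldots,n\}$ was arbitrary, this is precisely the assertion.

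I do not expect a real obstacle here: the resolvent expansion, the induction on $m$ in Lemma \ref{l:tecsum}, and the transfer of the summability condition along the parameter in Lemma \ref{l:contecsum} have all been carried out already, and the only care needed is in verifying that the telescoping sums are consistent across the overlapping subintervals $[s_i, s_{i+1}]$ so that the local continuity statements patch together globally.
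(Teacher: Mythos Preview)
Your proposal is correct and follows exactly the approach indicated in the paper, which simply states that the result is an easy consequence of Lemma \ref{l:contecsum} and Lemma \ref{l:tecsum}. You have merely supplied the routine covering-and-telescoping details that the paper omits.
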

\begin{proof}
This is an easy consequence of Lemma \ref{l:contecsum} and Lemma \ref{l:tecsum}.
\end{proof}

\section{Invariance properties of the unbounded homological index}\label{s:invunbhom}

In this Section we will prove our main result, Theorem \ref{t:hominvunb} about invariance of the homological index under perturbations of our unbounded operators. The notation which we introduced in Section \ref{s:exiunbhom} will be applied throughout this Section. In particular, $\C D^+ : \T{Dom}(\C D^+) \to \C H$ and $B^+ : \T{Dom}(B^+) \to \C H$ are closed unbounded operators.
\bigskip

\emph{It will be assumed throughout this Section that $\C D^+$ and $B^+$ satisfy the conditions described in Assumption \ref{a:patrelmai}.}
\bigskip

%
% which is almost bounded with respect to $\C D^+$, see Definition \ref{d:cloalmbou}. In Section \ref{s:exiunbhom} we provided conditions on the (relative) decay of $B^+$ which implied that the homological index of $\C D^+$ existed if and only if the homological index of $\C D^+ + B^+$ existed, see Theorem \ref{t:exihomind}. We shall now see that these decay conditions are strong enough to imply that the two homological indices will in fact coincide when they both exist.}

{\color{black} Recall from the discussion in Section \ref{s:homindunb} that the homological index of $\C D^+$ exists if and only if the homological index of the bounded transform $T^+ := \C D^+ (1 + \C D^- \C D^+)^{-1/2}$ exists. And in this case, we have the identity $\T{H-Ind}_n(\C D^+) = \T{H-Ind}_n(T^+)$. For our purposes it is therefore enough to compare the homological indices of $T_0^+$ and $T_1^+$. And this can be done using Theorem \ref{t:hominvII}.}

{\color{black} As in Subsection \ref{ss:hominvpro} we apply the notation
\[
T_t := \ma{cc}{0 & (T_t^+)^* \\
T_t^+ & 0
} 
\q \T{and} \q
R_t := 1 - T_t^2 = \ma{cc}{ 1 - T_t^- T_t^+ & 0 \\ 0 & 1 - T_t^+ T_t^-}
\]
where $T_t := \C D_t \cd (1 + \C D_t^2)^{-1/2}$ denotes the bounded transform of $\C D_t := \C D + t \cd B : \T{Dom}(\C D) \to \C H \op \C H$. Our strategy is thus to prove that the path of bounded operators $t \mapsto T_t$ satisfy the conditions of Theorem \ref{t:hominvunb}.}

Since the technicalities involved in showing that the path $t \mapsto T_t$ is continuously differentiable are substantial (see for example \cite[Section 6]{CaPoSu:SIF}), we will avoid this question in the current paper and simply make the following standing assumption:

\begin{assu}\label{a:patrelbou}
Suppose that the closed unbounded operators $\C D^+ : \T{Dom}(\C D^+) \to \C H$ and $B^+ : \T{Dom}(B^+) \to \C H$ satisfy the conditions of Assumption \ref{a:patrelmai}. Suppose furthermore that the path of bounded transforms $t \mapsto T_t$ is continuously differentiable in operator norm.
\end{assu}

\emph{Unless the contrary is explicitly mentioned, the above Assumption will remain in effect throughout this Section.}
\bigskip

%
%{\color{blue}
%In the Appendix to this paper we show that the path $t \mapsto T_t \cd (i + \C D)^{-1}$ is continuously differentiable in operator norm and that the derivative is given by the explicit formula:
%\begin{equation}\label{eq:derboutra}
%\begin{split}
%& \frac{d T_t}{dt} : t_0 \mapsto \frac{1}{\pi} \int_0^{\infty} \mu^{-1/2} \Big( R_{t_0}^{1 + \mu} \cd (1 + \mu) \cd B \cd 
%R_{t_0}^{1 + \mu} \\
%& \qqqq \qqq - \C D_{t_0} R_{t_0}^{1 + \mu} \cd B \cd \C D_{t_0} R_{t_0}^{1 + \mu} \Big) \, d\mu
%\end{split}
%\end{equation}
%where the integral converges absolutely in operator norm, see Proposition \ref{p:} (THIS WILL BE MODIFIED SLIGHTLY). For each $n \in \nn$, it is furthermore proved in the appendix that the derivative of the continuously differentiable path $t \mapsto R_t$ is given by
%\begin{equation}\label{eq:derpowres}
%\begin{split}
%\frac{d (R_t^n)}{dt} : t_0 \mapsto - \sum_{j=0}^{n-1} R_{t_0}^j 
%\cd \big( R_{t_0}\cd X_{t_0} + X_{t_0}^* \cd R_{t_0} \big) \cd R_{t_0}^{n-1 -j}
%\end{split}
%\end{equation}
%see Proposition \ref{p:}.
%}

{\color{black} After this preliminary discussion we prove the first half of the conditions in Theorem \ref{t:hominvII}.}

\begin{prop}\label{p:traconder}
Let $n \in \nn$ and suppose that
$
(R_0)^j \cd X_0 \cd (R_0)^k \in \sL^{n/(j+k)}(\C H \op \C H)
$
for all $j,k \in \{0,\ldots,n\}$ with $1 \leq j + k \leq n$. Then the derivative $\frac{d \big((R_t)^n\big)}{dt}$ defines a continuous map
$
\frac{d \big((R_t)^n\big)}{dt} : [0,1] \to \sL^1(\C H \op \C H).
$
\end{prop}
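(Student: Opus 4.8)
The plan is to differentiate the resolvent factorization of Lemma~\ref{l:resexpa} explicitly and then to read off both trace-class membership and trace-norm continuity from the summability hypothesis, Lemma~\ref{l:contecsum} and the H\"older inequality. First I would establish that $t \mapsto R_t$ is norm-differentiable on $[0,1]$ with
\[
\frac{dR_t}{dt} = -X_t^* \cd R_t - R_t \cd X_t .
\]
To see this, fix $s \in [0,1]$ and choose $N \in \nn$ with $N > \sup_t \| X_t \|$, so that Lemma~\ref{l:resexpa} applies for $|t-s| \le 1/N$: there $R_t = Y_t^* \cd R_s \cd Y_t$ with $Y_t := (1+(t-s)X_s)^{-1}$. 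The path $t \mapsto Y_t$ is norm-analytic near $s$ with $\frac{dY_t}{dt} = -Y_t X_s Y_t$, hence $\frac{dY_t}{dt}\big|_{t=s} = -X_s$ and $\frac{dY_t^*}{dt}\big|_{t=s} = -X_s^*$; differentiating $R_t = Y_t^* R_s Y_t$ at $t=s$ gives $\frac{dR_t}{dt}\big|_{t=s} = -X_s^* R_s - R_s X_s$, and since $s$ is arbitrary the formula follows. I would also record, from the resolvent identities used in the proof of Lemma~\ref{l:contecsum}, that $X_t = X_s Y_t = Y_t X_s$ for $|t-s| \le 1/N$ (in particular $Y_t$ commutes with $X_s$); this will be used below.

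Next I would apply the Leibniz rule, which is legitimate since operator multiplication is jointly norm-continuous and $t \mapsto R_t$ is $C^1$, obtaining
\[
\frac{d(R_t^n)}{dt} = \sum_{i=0}^{n-1} R_t^i \cd \frac{dR_t}{dt} \cd R_t^{n-1-i}
= -\sum_{i=0}^{n-1}\big( R_t^{i+1} X_t R_t^{n-1-i} + R_t^{i} X_t^* R_t^{n-i} \big),
\]
a finite sum of operators of the form $R_t^j X_t^{(*)} R_t^k$ with $j,k \ge 0$ and $j+k=n$. The hypothesis is condition $(1)$ of Lemma~\ref{l:contecsum} at $s_0 = 0$, so condition $(2)$ holds: $R_t^j X_t R_t^k \in \sL^{n/(j+k)}(\C H \op \C H)$ for all $t \in [0,1]$ whenever $1 \le j+k \le n$. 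For $j+k=n$ this is $\sL^1(\C H \op \C H)$, and passing to adjoints handles the $X_t^*$-terms, so $\frac{d(R_t^n)}{dt} \in \sL^1(\C H \op \C H)$ for every $t$.

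It then remains to show that each $t \mapsto R_t^j X_t R_t^k$ (and its adjoint), $j+k=n$, is continuous into $\sL^1(\C H \op \C H)$; since this is a local question it suffices to work on $|t-s| \le 1/N$ for each fixed $s$. My plan is to substitute $R_t = Y_t^* R_s Y_t$ and $X_t = X_s Y_t = Y_t X_s$ and then repeatedly use $Y_t - 1 = (s-t) Y_t X_s = (s-t) X_s Y_t$, together with the corresponding identity for $Y_t^*$, so as to expand $R_t^j X_t R_t^k$ as a leading term equal to $R_s^j X_s R_s^k$ plus finitely many correction terms, each carrying a factor $(s-t)^\ell$ with $\ell \ge 1$ and each a word in $R_s$, $X_s^{(*)}$ and the uniformly bounded operators $Y_t^{(*)}$ that contains exactly $n$ copies of $R_s$ and at least two copies of $X_s^{(*)}$. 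After commuting the $Y_t$ past the $X_s$ (and $Y_t^*$ past $X_s^*$) so that each run of resolvent powers is adjacent to some $X_s^{(*)}$, the H\"older inequality on the Schatten scale together with $R_s^p X_s^{(*)} R_s^q \in \sL^{n/(p+q)}(\C H \op \C H)$ (valid for all $s$ by Lemma~\ref{l:contecsum}) shows that each correction term lies in $\sL^1(\C H \op \C H)$ with trace norm $O(|t-s|)$ and depends norm-continuously on $t$ --- this is the same mechanism as in the proof of Lemma~\ref{l:tecsum}. Since the leading term $R_s^j X_s R_s^k$ is independent of $t$, continuity at $s$, and hence on all of $[0,1]$, follows; combined with the Leibniz formula this yields the asserted continuous map $\frac{d(R_t^n)}{dt} : [0,1] \to \sL^1(\C H \op \C H)$.

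I expect the last step to be the main obstacle. Trace-class membership of $R_t^j X_t R_t^k$ is not robust under an arbitrary regrouping of factors: it genuinely uses that the $n$ resolvent powers are distributed around a single copy of $X$. So in the expansion one must be careful that the spurious $Y_t$-factors produced by the resolvent identity never strand a block of resolvent powers away from an $X_s^{(*)}$; this is precisely where the commutation $Y_t X_s = X_s Y_t$, and the fact that each correction term carries an extra copy of $X_s^{(*)}$ and an extra power of $(s-t)$, are used, so that the H\"older estimate closes into $\sL^1(\C H \op \C H)$.
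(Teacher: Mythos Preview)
Your strategy is sound and matches the paper's: both reduce to showing that each $t\mapsto R_t^{\,j}X_tR_t^{\,n-j}$ is continuous in $\sL^1(\C H\oplus\C H)$, after obtaining the Leibniz formula for $\frac{d(R_t^n)}{dt}$. The paper is more economical at both stages: it quotes Proposition~\ref{p:derpowres} for the derivative (rather than re-deriving it from Lemma~\ref{l:resexpa}) and then, for the continuity, it combines Proposition~\ref{p:resdiflam} (continuity of $t\mapsto R_t^m-R_0^m$ in $\sL^{n/m}$) with the identity \eqref{eq:resreside}, which expresses $X_t$ as $X_0$ flanked on both sides by bounded $t$-continuous factors, so that a single H\"older splitting with base point $0$ finishes the job. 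This bypasses any direct manipulation of $Y_t$-words.

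There is one genuine imprecision in your write-up. The step ``commuting the $Y_t$ past the $X_s$ (and $Y_t^*$ past $X_s^*$) so that each run of resolvent powers is adjacent to some $X_s^{(*)}$'' does not work as stated: after substituting $R_t=Y_t^*R_sY_t$, adjacent resolvent blocks are separated by factors $Y_tY_t^*$, and $Y_t^*$ does \emph{not} commute with $X_s$ (it commutes with $X_s^*$), so you cannot slide the available $X_s$ across a $Y_t^*$ to reach the next $R_s$. What actually rescues this --- and what Lemma~\ref{l:tecsum}, which you rightly cite, really does --- is not commutation but iterated expansion of each $Y_t^{(*)}$ that sits next to an $R_s$ via $Y_t=1+(s-t)X_sY_t$ and $Y_t^*=1+(s-t)Y_t^*X_s^*$, chosen so that the newly created $X_s^{(*)}$ lands adjacent to that $R_s$; then every block of resolvent powers acquires its own $X$-neighbour and H\"older closes into $\sL^1$. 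If you either make this expansion explicit, or simply invoke Proposition~\ref{p:resdiflam} together with \eqref{eq:resreside} as the paper does, the argument is complete.
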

\begin{proof}
By Proposition \ref{p:derpowres} we have that
\[
\frac{d \big((R_t)^n\big)}{dt}\big|_{t_0} = - \sum_{j=0}^{n-1} (R_{t_0})^j \cd \big( R_{t_0} \cd X_{t_0} + X_{t_0}^* \cd R_{t_0} \big) \cd (R_{t_0})^{n-1 - j}
\]
for all $t_0 \in [0,1]$. Since the adjoint operation determines an isometry $* : \sL^1(\C H \op \C H) \to \sL^1(\C H \op \C H)$ it is therefore enough to show that
\[
t \mapsto (R_t)^j \cd X_t \cd (R_t)^{n - j}
\]
defines a continuous map $[0,1] \to \sL^1(\C H \op \C H)$ for all $j \in \{0,\ldots,n\}$. But this is a consequence of Proposition \ref{p:resdiflam}, the identity in \eqref{eq:resreside} and the H\"older inequality.
\end{proof}

{\color{black} Our next aim is to prove the trace norm continuity of the path $t \mapsto \frac{d T_t}{dt} \cd (R_t)^n$ under the following summability conditions:

\begin{assu}\label{a:sumpat}
Let $n \in \nn$ and suppose that
$
(R_0)^j \cd X_0 \cd (R_0)^k \in \sL^{n/(j+k)}(\C H \op \C H)
$
for all $j,k \in \{0,\ldots,n\}$ with $1 \leq j + k \leq n$. Suppose furthermore that there exists an $\ep \in (0,1/2)$ such that
$
X_0 \cd (R_0)^{n- \ep} \in \sL^1(\C H \op \C H).
$
\end{assu}
\bigskip

\emph{The above Assumption will remain in effect for the rest of this Section.}}
\bigskip

{\color{black} Remark now that it follows from Proposition \ref{p:resdiflam} that the path $t \mapsto \frac{d T_t}{dt}\big|_{t_0} \cd (R_t)^n$ is continuous in trace norm if and only if the path $t_0 \mapsto \frac{d T_t}{dt}\big|_{t_0} \cd (R_0)^n$ is continuous in trace norm. Furthermore, it follows from Proposition \ref{p:derboutra} that we have the explicit formula:
\begin{equation}\label{eq:traconsum}
\begin{split}
%& 
\frac{d T_t}{dt}\Big|_{t_0} \cd (R_0)^n 
%\\
&
% \q 
= (1 + \C D_{t_0}^2)^{-1/2} \cd X_0 \cd (R_0)^{n-1} \cd (-i + \C D)^{-1} \\
& \q - \frac{1}{\pi} \cd \int_0^\infty \mu^{-1/2} \cd 
\big( R_{t_0}^{1 + \mu} \cd X_{t_0}^{1 + \mu} + (X_{t_0}^{1 + \mu})^* \cd R_{t_0}^{1 + \mu} \big)
\, d\mu \cd \C D_{t_0} \cd (R_0)^n
\end{split}
\end{equation}
for all $t_0 \in [0,1]$. 
In order to show that the path $t \mapsto \frac{d T_t}{dt}\big|_{t_0} \cd (R_t)^n$ is continuous in trace norm it is therefore enough to show that each of the paths
\begin{equation}\label{eq:twotracon}
\begin{split}
& t \mapsto (1 + \C D_t^2)^{-1/2} \cd X_0 \cd (R_0)^{n-1} \cd (-i + \C D)^{-1} \q \T{and} \\
& t \mapsto \int_0^\infty \mu^{-1/2} \cd 
\big( R_t^{1 + \mu} \cd X_t^{1 + \mu} + (X_t^{1 + \mu})^* \cd R_t^{1 + \mu} \big)
\, d\mu \cd \C D_t \cd (R_0)^n
\end{split}
\end{equation}
are continuous in trace norm.
}

We begin with the first one:

\begin{lemma}\label{l:firpat}
The path
$
t \mapsto (1 + \C D_t^2)^{-1/2} \cd X_0 \cd (R_0)^{n-1} \cd (-i + \C D)^{-1}
$
is continuous in trace norm.
\end{lemma}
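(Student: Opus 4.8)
The plan is to exhibit the operator in the Lemma as a product $W_t^\ast\, Z\, \tilde V$ in which all of the $t$-dependence is concentrated in a uniformly bounded, strongly continuous family $\{W_t^\ast\}$, while $Z$ is a single $t$-independent trace-class operator and $\tilde V$ a fixed contraction; once this is set up, trace-norm continuity becomes a soft consequence.

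First I would carry out the algebraic reduction. Put $R:=(1+\C D^2)^{-1}=R_0$ and $\tilde V:=(1+\C D^2)^{1/2}(-i+\C D)^{-1}$; the latter is a normal operator whose spectrum lies on the unit circle, so $\|\tilde V\|=1$, and the functional calculus for $\C D$ gives $R^{n-1}(-i+\C D)^{-1}=R^{n-1/2}\tilde V$. Since $\T{Dom}(\C D_t)=\T{Dom}(\C D)$ (Proposition~\ref{p:dompat}), the operator $(1+\C D^2)^{1/2}(1+\C D_t^2)^{-1/2}$ is everywhere defined and closed, hence bounded by the closed graph theorem; its adjoint $W_t^\ast:=\ov{(1+\C D_t^2)^{-1/2}(1+\C D^2)^{1/2}}$ is therefore bounded and satisfies $W_t^\ast R^{1/2}=(1+\C D_t^2)^{-1/2}$. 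Combining these identities, $(1+\C D_t^2)^{-1/2}X_0R^{n-1}(-i+\C D)^{-1}=W_t^\ast\big(R^{1/2}X_0R^{n-1/2}\big)\tilde V=:W_t^\ast Z\,\tilde V$ with $Z:=R^{1/2}X_0R^{n-1/2}$.

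Next I would record the two properties of $\{W_t^\ast\}$. For uniform boundedness, note that $\T{Dom}(\C D)\su\T{Dom}(B)$ (Assumption~\ref{a:patrelmai}(1)) makes $X_t=B(i+\C D_t)^{-1}$ the actual bounded operator $B(i+\C D_t)^{-1}$, so $\|Bu\|=\|X_t(i+\C D_t)u\|\le K\|(1+\C D_t^2)^{1/2}u\|$ for $u\in\T{Dom}(\C D)$ by Assumption~\ref{a:patrelmai}(4); a routine estimate then yields $\|(1+\C D^2)^{1/2}u\|\le C\|(1+\C D_t^2)^{1/2}u\|$ with $C$ independent of $t$, and taking $u=(1+\C D_t^2)^{-1/2}w$ gives $\sup_t\|W_t^\ast\|\le C$. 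For strong continuity, on the dense subspace $\T{Dom}(\C D)$ one has $W_t^\ast\xi=R_t^{1/2}\eta_\xi$ where $R_t:=(1+\C D_t^2)^{-1}=1-T_t^2$ and $\eta_\xi:=(1+\C D^2)^{1/2}\xi$ is $t$-independent; since $t\mapsto T_t$ is norm continuous (Assumption~\ref{a:patrelbou}), so are $t\mapsto R_t$ and $t\mapsto R_t^{1/2}$, whence $t\mapsto W_t^\ast\xi$ is continuous for $\xi\in\T{Dom}(\C D)$, and the uniform bound extends this to strong continuity on all of $\C H\op\C H$.

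The hard part is showing $Z=R^{1/2}X_0R^{n-1/2}\in\sL^1$: the weights $1/2$ and $n-1/2$ on the two sides of $X_0$ add up to $n$, but being half-integers they are not covered by Assumption~\ref{a:sumpat} directly, so I would obtain this by complex interpolation. Taking $(j,k)=(0,n)$ and $(j,k)=(1,n-1)$ in Assumption~\ref{a:sumpat} (and $\sL^{n/n}=\sL^1$), both $X_0R^n$ and $RX_0R^{n-1}$ are trace class. The map $F(z):=R^zX_0R^{n-z}$ is norm-analytic on $\{0<\Re z<1\}$ and norm-bounded on the closed strip (since $\|R^w\|\le1$ for $\Re w\ge0$), and because $R$ is injective and self-adjoint the operators $R^{iy}$ are unitary, so $F(iy)=R^{iy}(X_0R^n)R^{-iy}$ and $F(1+iy)=R^{iy}(RX_0R^{n-1})R^{-iy}$ have constant trace norm; Stein interpolation of the Schatten ideals then gives $Z=F(1/2)\in\sL^1$ with $\|Z\|_1\le\|X_0R^n\|_1^{1/2}\|RX_0R^{n-1}\|_1^{1/2}$. (The only technical nuisance, that $y\mapsto R^{iy}$ is merely strongly continuous, is absorbed into the standard proof of Schatten-class interpolation.) Finally, with $Z\in\sL^1$ and $\tilde V$ bounded both fixed and $\{W_t^\ast\}$ uniformly bounded and strongly continuous, $t\mapsto W_t^\ast Z\,\tilde V$ is continuous in trace norm — approximate $Z$ in $\|\cd\|_1$ by finite-rank operators, use $\|W_t^\ast(\cd)\|_1\le C\|\cd\|_1$, and invoke strong continuity of $W_t^\ast$ on the range vectors of the approximant — which is exactly the assertion of the Lemma.
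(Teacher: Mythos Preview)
Your argument is correct, but it takes a substantially longer route than the paper's.  The paper observes the algebraic identity
\[
(1+\C D_t^2)^{-1/2}\,X_0\,(R_0)^{n-1}\,(-i+\C D)^{-1}
\;=\;
\big[(-i+\C D_t)(1+\C D_t^2)^{-1/2}\big]\cdot (X_t)^*\cdot (R_0)^n ,
\]
which follows from $X_0 = B(i+\C D)^{-1}$, $(X_t)^* = (-i+\C D_t)^{-1}B$, and commutation of $(-i+\C D)^{-1}$ with $R_0$.  This puts an \emph{integer} power $R_0^n$ on one side, so no interpolation is needed: the first bracket equals $-iR_t^{1/2}+T_t$, which is \emph{norm} continuous by Assumption~\ref{a:patrelbou} and Lemma~\ref{l:dersqu}, while $(X_t)^*(R_0)^n = (X_0)^*(R_0)^n - t\,(X_0X_t)^*(R_0)^n$ is trace-norm continuous because $(R_0)^nX_0\in\sL^1$ (case $(j,k)=(n,0)$ of Assumption~\ref{a:sumpat}) and $t\mapsto X_t$ is Lipschitz in operator norm.

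Your decomposition $W_t^*\,Z\,\tilde V$ instead leaves half-integer weights $R^{1/2}$ and $R^{n-1/2}$ on either side of $X_0$, forcing you to invoke Stein interpolation to get $Z\in\sL^1$, and to run a separate strong-continuity-times-trace-class argument for the $t$-dependence.  These steps are sound (in particular $z\mapsto R^z$ is indeed norm-analytic on $\Re z>0$ since $\la\mapsto\la^z\log\la$ is bounded on $\sigma(R)\subset(0,1]$, so your analyticity claim holds), but they are unnecessary once one spots the rewriting above.  The payoff of the paper's trick is that the $t$-dependent factor is norm continuous rather than merely strongly continuous, and the trace-class factor is obtained directly from the standing summability hypotheses without interpolation.
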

\begin{proof}
For each $t \in [0,1]$ we have that
\[
%\begin{split}
%& 
(1 + \C D_t^2)^{-1/2} \cd X_0 \cd (R_0)^{n-1} \cd (-i + \C D)^{-1} 
%\\
%& \q 
= (-i + \C D_t) \cd (1 + \C D_t^2)^{-1/2} \cd (X_t)^* \cd (R_0)^n
%\end{split}
\]
By the H\"older inequality it is therefore enough to show that
\[
t \mapsto (-i + \C D_t) \cd (1 + \C D_t^2)^{-1/2} = -i \cd (1 + \C D_t^2)^{-1/2} + T_t
\]
is continuous in operator norm and that
\[
t \mapsto (X_t)^* \cd (R_0)^n = (X_0)^* \cd (R_0)^n - t \cd \big( X_0 \cd X_t)^* \cd (R_0)^n
\]
is continuous in trace norm. But this is a consequence of our general conditions (Assumption \ref{a:patrelbou} and Assumption \ref{a:sumpat}) and Lemma \ref{l:dersqu}.
\end{proof}

In order to show that the second path in \eqref{eq:twotracon} is continuous in trace norm we first analyze the integrand:

\begin{lemma}\label{l:traconint}
Both of the paths
\begin{equation}\label{eq:twopattra}
t \mapsto R_t^\la \cd X_t^\la \cd \C D_t \cd (R_0)^n \q \T{and} \q
t \mapsto (X_t^\la)^* \cd R_t^\la \cd \C D_t \cd (R_0)^n
\end{equation}
are continuous in trace norm for all $\la > 0$. Furthermore, there exists a constant $K > 0$ such that
\[
\begin{split}
\big\| R_t^\la \cd X_t^\la \cd \C D_t \cd (R_0)^n \big\|_1 \, \, \, , \, \, \,
\big\| (X_t^\la)^* \cd R_t^\la \cd \C D_t \cd (R_0)^n \big\|_1 \leq K \cd \la^{-1/2 - \ep}
\end{split}
\]
for all $t \in [0,1]$ and all $\la \geq 1$.
\end{lemma}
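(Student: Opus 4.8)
The plan is as follows. The two paths in \eqref{eq:twopattra} are treated by identical arguments — the second arises from the first on replacing the leftmost factor $R_t^\la X_t^\la$ by its adjoint $(X_t^\la)^* R_t^\la$, since $R_t^\la$ is selfadjoint — so I concentrate on $t \mapsto R_t^\la X_t^\la \C D_t (R_0)^n$. First I would dispose of the factor $\C D_t (R_0)^n$: writing $\C D_t = \C D + tB$ and using $B(1+\C D^2)^{-1} = X_0(-i+\C D)^{-1}$ together with the domain inclusions of Assumption \ref{a:patrelmai}, one gets $\C D_t(R_0)^n = \C D(R_0)^n + t\cd X_0(-i+\C D)^{-1}(R_0)^{n-1}$, which is an affine, hence operator-norm continuous, path of bounded operators on $\C H\op\C H$. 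It therefore remains to understand $t\mapsto R_t^\la X_t^\la$. Here, exactly as in Lemma \ref{l:resexpa} and the expansion \eqref{eq:resreside}, the resolvent identity lets me write, for $|t-s|$ small, $R_t^\la$ and $X_t^\la$ as finite sums of products of $R_s^\la$ and $X_s^\la$ with operator-norm continuous, uniformly bounded families of operators. Inserting this and combining with Proposition \ref{p:resdiflam}, Lemma \ref{l:contecsum} and Lemma \ref{l:tecsum}, the trace-norm continuity on each interval $[s-1/N,s+1/N]\cap[0,1]$ reduces to two statements: that the products of the form $E'\cd(R_0)^a X_0 (R_0)^b\cd E''$ arising in the expansion, with $E'$ and $E''$ bounded and $a+b$ equal to $n$ or to $n-\ep$, lie in $\sL^1$ — which is immediate from the summability hypotheses of Assumption \ref{a:sumpat} and the H\"older inequality — and that they vary continuously in trace norm, which holds because each difference produced by the resolvent identities carries an explicit factor $(t-s)$ in front of a uniformly trace-class expression.

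For the quantitative bound (valid for $\la\geq1$) I would expand $R_t^\la X_t^\la \C D_t$ through the functional calculus of the selfadjoint operator $\C D_t$: using $\la+\C D_t^2 = (\sqrt\la - i\C D_t)(\sqrt\la + i\C D_t)$, this product is rewritten so as to contain the single factor $B$ flanked by resolvents of $\C D_t$ at the imaginary points $\pm i\sqrt\la$, each of operator norm $\la^{-1/2}$, together with a factor of operator norm $\leq 1$ coming from $(\sqrt\la - i\C D_t)^{-1}\C D_t$. Multiplying on the right by $(R_0)^n$, I then redistribute powers of the fixed resolvent $(1+\C D^2)^{-1}$ and move them past the $\C D_t$-resolvents — which is legitimate by Proposition \ref{p:dompat} (giving $\T{Dom}(\C D_t)=\T{Dom}(\C D)$) and the further Appendix results on boundedness of operators such as $(1+\C D_t^2)^{1/2}(1+\C D^2)^{-1/2}$ — so as to display $R_t^\la X_t^\la \C D_t(R_0)^n$ as a sum of products of (i) a trace-class core whose trace norm is bounded by a constant multiple of $\|X_0(R_0)^{n-\ep}\|_1$ uniformly in $t\in[0,1]$ and $\la\geq1$, and (ii) a bounded remainder assembled from the leftover $\la$-resolvents. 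Since Assumption \ref{a:sumpat} supplies the $n-\ep$ units of $(1+\C D^2)$-smoothing needed for the core and no more is used, the remainder retains, beyond the single power of $(\sqrt\la - i\C D_t)^{-1}$ that gives $\la^{-1/2}$, an additional $\ep$ worth of $\la$-resolvent — this uses $\ep\in(0,1/2)$ — so that its operator norm is $O(\la^{-1/2-\ep})$. The H\"older inequality then yields the stated bound, uniformly in $t$ and $\la\geq1$.

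The delicate point is this last step: the estimate is essentially sharp — the trace-class core just fails to be of trace class once one unit of smoothing is removed — so every resolvent power has to be allocated exactly, and every rearrangement of functions of $\C D$, $\C D_t$ and $B$ must be justified through the domain-comparison results of the Appendix. In particular it is the stronger hypothesis $X_0(R_0)^{n-\ep}\in\sL^1$ with $\ep<1/2$, rather than merely $X_0(R_0)^n\in\sL^1$, that produces the extra decay $\la^{-\ep}$ over the naive bound $\la^{-1/2}$.
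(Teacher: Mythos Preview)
Your outline is plausible in spirit but much more circuitous than the paper's argument, and at least one step, as written, does not work. You begin by isolating $\C D_t(R_0)^n$ as an operator-norm continuous bounded path and then say ``it therefore remains to understand $t\mapsto R_t^\la X_t^\la$''. But $R_t^\la X_t^\la$ is not trace class on its own; the $(R_0)^n$ factor must stay adjacent to the $X$-factor to produce anything in $\sL^1$, so the product cannot be analysed as (bounded path) $\times$ (trace-class path). You seem to realise this later when you speak of terms $E'(R_0)^a X_0 (R_0)^b E''$, yet you never explain how the resolvent expansions of $R_t^\la$ and $X_t^\la$ --- which produce $R_s^\la$'s and $X_s^\la$'s, not powers of $R_0=(1+\C D^2)^{-1}$ --- yield such expressions, nor why the case $a+b=n-\ep$ with $a\neq 0$ is covered by Assumption~\ref{a:sumpat}, which only asserts $X_0(R_0)^{n-\ep}\in\sL^1$.

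The paper avoids all of this with a single algebraic identity. Since $X_t^\la(i\la^{1/2}+\C D_t)=B$ on $\T{Dom}(\C D)$ one has $X_t^\la\C D_t = B - i\la^{1/2}X_t^\la$, and since $(-i\la^{1/2}+\C D_t)^{-1}B=(X_t^\la)^*$ this gives
\[
R_t^\la X_t^\la \C D_t (R_0)^n \;=\; (i\la^{1/2}+\C D_t)^{-1}\,(X_t^\la)^*\,(R_0)^n \;-\; i\la^{1/2}\, R_t^\la\, X_t^\la\,(R_0)^n.
\]
Each summand is a factor of operator norm at most $\la^{-1/2}$ times one of $(X_t^\la)^*(R_0)^n$ or $X_t^\la(R_0)^n$. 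A separate short lemma (Lemma~\ref{l:tecinequa}) then shows $\|X_t^\la(R_0)^n\|_1,\,\|(R_0)^n X_t^\la\|_1\leq K\la^{-\ep}$ uniformly in $t$, by writing $X_t^\la=(1-tX_t^\la)X_0(R_0)^{n-\ep}\cdot(i+\C D)(i\la^{1/2}+\C D)^{-1}(R_0)^{\ep-n}$ and estimating the last factor by $K\la^{-\ep}$. This handles both the trace-norm continuity and the quantitative bound at once, with no redistribution of resolvent powers past $\C D_t$-resolvents and no appeal to Lemmas~\ref{l:resexpa}--\ref{l:tecsum}. Your intuition about where the extra $\la^{-\ep}$ comes from is correct; the point is that one clean rewriting exposes it directly.
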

\begin{proof}
We start by computing as follows
\[
\begin{split}
%& 
R_t^\la \cd X_t^\la \cd \C D_t \cd (R_0)^n  &
%\\& \q
 = R_t^\la \cd B \cd (R_0)^n - i \cd \la^{1/2} \cd R_t^\la \cd X_t^\la \cd (R_0)^n \\
& 
%\q 
= (i \cd \la^{1/2} + \C D_t)^{-1} \cd (X_t^\la)^* \cd (R_0)^n
- i \cd \la^{1/2} \cd R_t^\la \cd X_t^\la \cd (R_0)^n
\end{split}
\]
for all $t \in [0,1]$ and all $\la > 0$.
This computation proves that the first path in \eqref{eq:twopattra} is continuous in trace norm. Furthermore, there exists a constant $K_1 > 0$ such that
\[
\big\| R_t^\la \cd X_t^\la \cd \C D_t \cd (R_0)^n \big\|_1
\leq K_1 \cd \la^{-1/2} \cd \Big( \big\| (R_0)^n \cd X_t^\la \big\|_1 +  \big\| X_t^\la \cd (R_0)^n \big\|_1 \Big)
\]
for all $t \in [0,1]$ and all $\la > 0$. The desired estimate on the first path in \eqref{eq:twopattra} therefore follows from the next Lemma.
A similar argument proves the required statements for the second path in \eqref{eq:twopattra}.
\end{proof}
%
%Apply the notation
%\[
%X_t^\la := B \cd (i \la^{1/2} + \C D_t)^{-1}
%\]
%for all $t \in [0,1]$ and all $\la > 0$.

\begin{lemma}\label{l:tecinequa}
There exists a constant $K > 0$ such that 
\[
\big\| X_t^\la \cd (R_0)^n \big\|_1 \, \, \, , \, \, \, \big\| (R_0)^n \cd X_t^\la \big\|_1 \leq K \cd \la^{-\ep}
\]
for all $t \in [0,1]$ and all $\la \geq 1$.
\end{lemma}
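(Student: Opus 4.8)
The plan is to reduce both estimates to the endpoint $t=0$ and then to extract them from the functional calculus for the selfadjoint operator $\C D_0$, feeding in the trace-class datum furnished by Assumption \ref{a:sumpat}. Recall that $X_t^\la$ is the bounded extension of $B\cd(i\la^{1/2}+\C D_t)^{-1}$; since $X_t^\la=X_t\cd(i+\C D_t)(i\la^{1/2}+\C D_t)^{-1}$ and the function $x\mapsto(i+x)(i\la^{1/2}+x)^{-1}$ has modulus $\le1$ on $\rr$ for every $\la\ge1$, Assumption \ref{a:patrelmai} yields a finite bound $K_0:=\sup_{t\in[0,1],\,\la\ge1}\|X_t^\la\|$. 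Writing $\C D_t-\C D_0=t\cd B$ and applying the resolvent identity in its two orderings (compare \eqref{eq:resreside}) I would first establish $X_t^\la=X_0^\la-t\cd X_t^\la X_0^\la=X_0^\la-t\cd X_0^\la X_t^\la$, from which
\[
\|X_t^\la(R_0)^n\|_1\le(1+K_0)\|X_0^\la(R_0)^n\|_1,\qquad \|(R_0)^nX_t^\la\|_1\le(1+K_0)\|(R_0)^nX_0^\la\|_1
\]
for all $t\in[0,1]$ and all $\la\ge1$. It therefore suffices to bound the two right-hand sides.

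For the first, I would observe that in $X_0^\la(R_0)^n=B\cd(i\la^{1/2}+\C D_0)^{-1}(R_0)^n$ every factor standing to the right of $B$ is a function of $\C D_0$; writing $\phi_\la(x):=(i+x)(i\la^{1/2}+x)^{-1}$ we have $X_0^\la(R_0)^n=X_0\cd\phi_\la(\C D_0)(R_0)^n$, and since $\phi_\la(\C D_0)$ commutes with the power $(R_0)^n$ we may rearrange
\[
X_0^\la(R_0)^n=\big(X_0(R_0)^{n-\ep}\big)\cd\big((R_0)^\ep\phi_\la(\C D_0)\big).
\]
The first factor lies in $\sL^1(\C H\op\C H)$ by Assumption \ref{a:sumpat}, while
\[
\big\|(R_0)^\ep\phi_\la(\C D_0)\big\|=\sup_{x\in\rr}\frac{(1+x^2)^{1/2-\ep}}{(\la+x^2)^{1/2}}\le\la^{-\ep}\qquad(\la\ge1),
\]
the inequality holding because $1+x^2\le\la+x^2$ and $1/2-\ep>0$; hence $\|X_0^\la(R_0)^n\|_1\le\|X_0(R_0)^{n-\ep}\|_1\cd\la^{-\ep}$.

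For the second right-hand side I would pass to the adjoint, $\|(R_0)^nX_0^\la\|_1=\|(X_0^\la)^*(R_0)^n\|_1$, where $(X_0^\la)^*$ is the extension of $(-i\la^{1/2}+\C D_0)^{-1}B$, and run the analogous computation, now using as trace-class input the adjoint form $(R_0)^{n-\ep}X_0^*\in\sL^1(\C H\op\C H)$ of Assumption \ref{a:sumpat} (and the fact that the full family of summability conditions of that Assumption is invariant under $X_0\leftrightarrow X_0^*$). The essential difference between the two cases — and the point I expect to be the main obstacle — is the placement of the non-commuting factor $B$: in $X_0^\la(R_0)^n$ the operator $B$ stands at the far left, so the whole block of functions of $\C D_0$ to its right may be permuted freely and the $\la$-decaying factor $(R_0)^\ep\phi_\la(\C D_0)$ can be slid next to it; but in $(X_0^\la)^*(R_0)^n=(-i\la^{1/2}+\C D_0)^{-1}B(R_0)^n$ the power $(R_0)^n$ and the $\la$-dependent resolvent sit on opposite sides of $B$, and since $B$ does not commute with functions of $\C D_0$ one cannot simply bring them together — this is where one genuinely has to exploit the adjoint and the symmetry of the hypotheses rather than transcribe the first argument. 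The reduction to $t=0$ at the start is routine resolvent-identity bookkeeping, its only substantive ingredient being the uniform bound $K_0<\infty$ used to absorb the correction terms.
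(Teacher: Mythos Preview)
Your treatment of $\|X_t^\la(R_0)^n\|_1$ is exactly the paper's argument: the paper writes
\[
X_t^\la(R_0)^n=(1-tX_t^\la)\cd X_0(R_0)^{n-\ep}\cd(i+\C D_0)(i\la^{1/2}+\C D_0)^{-1}(R_0)^\ep
\]
in a single line rather than first reducing to $t=0$, but the content is identical, and your operator--norm bound on $(R_0)^\ep\phi_\la(\C D_0)$ is the paper's bound on $(i+\C D_0)(i\la^{1/2}+\C D_0)^{-1}(R_0)^\ep$.

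The second estimate is where your proposal has a gap. Passing to the adjoint gives
\[
(X_0^\la)^*(R_0)^n=\overline{\phi_\la}(\C D_0)\cd X_0^*(R_0)^n,
\]
so the $\la$-decaying factor sits to the \emph{left} of $X_0^*$ while the resolvent power $(R_0)^n$ sits to the \emph{right}. To extract $\la^{-\ep}$ you would have to peel an $(R_0)^\ep$ off on the left and absorb the remainder into a trace-class block containing $X_0^*$; but the adjoint of the hypothesis in Assumption~\ref{a:sumpat} is $(R_0)^{n-\ep}X_0^*\in\sL^1$, with the resolvent on the \emph{left} of $X_0^*$, whereas what your factorisation needs is $X_0^*(R_0)^{n-\ep}\in\sL^1$. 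Your assertion that ``the full family of summability conditions of that Assumption is invariant under $X_0\leftrightarrow X_0^*$'' is true for the first family $(R_0)^jX_0(R_0)^k\in\sL^{n/(j+k)}$ (taking adjoints just swaps $j$ and $k$) but false for the crucial second condition: the adjoint of $X_0(R_0)^{n-\ep}\in\sL^1$ is $(R_0)^{n-\ep}X_0^*\in\sL^1$, not $X_0^*(R_0)^{n-\ep}\in\sL^1$. In other words the adjoint trick merely mirrors the configuration you already flagged as the obstacle; the $\la$-decaying piece and the spare $(R_0)^\ep$ remain on opposite sides of the non-commuting factor. The paper is equally terse here (``a similar argument proves the required trace norm estimate''), so on the level of detail you match the paper; but the specific justification you supply does not close the gap.
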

\begin{proof}
We start by computing as follows,
\[
\begin{split}
X_t^\la \cd (R_0)^n 
& = (1 - tX_t^\la) \cd X_0^\la \cd (R_0)^n \\
& = (1 - tX_t^\la) \cd X_0 \cd (i + \C D_0) \cd (i \la^{1/2} + \C D_0)^{-1} \cd (R_0)^n \\
& = (1 - tX_t^\la) \cd X_0 \cd (R_0)^{n-\ep} \cd (i + \C D_0) \cd (i \la^{1/2} + \C D_0)^{-1} \cd (R_0)^{\ep}
\end{split}
\]
for all $t \in [0,1]$ and $\la > 0$.
We then remark that there exists a constant $K_1 > 0$ such that
\[
\| X_t^\la \| \leq K_1 \q \T{and} \q \big\| (i + \C D_0) \cd (i \la^{1/2} + \C D_0)^{-1} \cd (R_0)^{\ep} \big\| \leq K_1 \cd \la^{-\ep}
\]
for all $\la \geq 1$ and all $t \in [0,1]$.
Combining these observations, we may choose a constant $K_2 > 0$ such that
$
\big\| X_t^\la \cd (R_0)^n \big\|_1 \leq K_2 \cd \la^{-\ep}
$
for all $t \in [0,1]$ and all $\la \geq 1$.
A similar argument proves the required trace norm estimate on the bounded operators $(R_0)^n \cd X_t^\la$.
\end{proof}

\begin{prop}\label{p:consqu}
The assignment
\[
t_0 \mapsto \frac{d T_t}{dt}\Big|_{t_0} \cd (R_{t_0})^n
\]
defines a continuous path $[0,1] \to \sL^1(\C H \op \C H)$.
\end{prop}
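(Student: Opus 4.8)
The plan is to reduce the statement to the two trace-norm continuity assertions already isolated in the discussion immediately preceding it. First I would invoke the Remark stated just before Lemma~\ref{l:firpat}: by Proposition~\ref{p:resdiflam}, the path $t_0 \mapsto \frac{d T_t}{dt}\big|_{t_0} \cd (R_{t_0})^n$ is continuous in trace norm if and only if $t_0 \mapsto \frac{d T_t}{dt}\big|_{t_0} \cd (R_0)^n$ is, so it suffices to treat the latter. Substituting the explicit formula \eqref{eq:traconsum} writes this operator as a sum of the two terms occurring in \eqref{eq:twotracon}, so it is enough to show that each of the paths in \eqref{eq:twotracon} is continuous in trace norm. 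The first one is exactly the content of Lemma~\ref{l:firpat}, so nothing further is needed there.

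For the second path I would pull the bounded operator $\C D_t \cd (R_0)^n$ inside the integral, so that the integrand becomes $\mu^{-1/2} \cd \big( R_t^{1+\mu} X_t^{1+\mu} \C D_t (R_0)^n + (X_t^{1+\mu})^* R_t^{1+\mu} \C D_t (R_0)^n \big)$. Applying Lemma~\ref{l:traconint} with $\la = 1+\mu$, which is $\geq 1$ for every $\mu > 0$, shows that this integrand is, for each fixed $\mu$, continuous in $t$ for the trace norm, and that its trace norm is bounded by a constant times $\mu^{-1/2}(1+\mu)^{-1/2-\ep}$, uniformly in $t \in [0,1]$. Since $\ep > 0$ by Assumption~\ref{a:sumpat}, the majorant $\mu \mapsto \mu^{-1/2}(1+\mu)^{-1/2-\ep}$ is integrable on $(0,\infty)$: it behaves like $\mu^{-1/2}$ near $\mu = 0$ and like $\mu^{-1-\ep}$ near $\mu = \infty$. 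A standard dominated-convergence argument for the $\sL^1(\C H \op \C H)$-valued integral then yields that $t \mapsto \int_0^\infty (\ldots)\, d\mu$ is continuous in trace norm: along any sequence $t_k \to t$ one applies the scalar dominated convergence theorem to the real-valued functions $\mu \mapsto \| F(t_k,\mu) - F(t,\mu)\|_1$, where $F(t,\mu)$ denotes the integrand, these tending pointwise to $0$ and being dominated by twice the majorant above. Together with Lemma~\ref{l:firpat} this proves that $t_0 \mapsto \frac{d T_t}{dt}\big|_{t_0} \cd (R_0)^n$ is trace-norm continuous, and the proposition follows from the opening reduction.

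All of the hard estimates have already been packaged into Lemmas~\ref{l:firpat}, \ref{l:traconint} and \ref{l:tecinequa}, so the only genuinely new point in the argument is the interchange of limit and integral. I expect this bookkeeping to be the main (and only) thing to be careful about: one must check that the integrand is a Bochner-measurable $\sL^1(\C H \op \C H)$-valued function of $\mu$, that the majorant supplied by Lemma~\ref{l:traconint} is uniform in $t$, and that both ends of the range of integration are controlled --- the lower one by local integrability of $\mu^{-1/2}$, the upper one by the decay exponent $\tfrac{1}{2}+\ep$ strictly exceeding $\tfrac{1}{2}$. No new inequality is required beyond those already established.
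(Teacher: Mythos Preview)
Your proposal is correct and follows essentially the same route as the paper: reduce to the second integral in \eqref{eq:twotracon} via Lemma~\ref{l:firpat} and \eqref{eq:traconsum}, then use Lemma~\ref{l:traconint} to get pointwise trace-norm continuity of the integrand together with an integrable majorant uniform in $t$. The only cosmetic difference is that the paper packages the interchange of limit and integral by working in the Banach algebra $C\big([0,1],\sL^1(\C H \op \C H)\big)$ with the supremum norm and showing the integral converges there, whereas you spell it out as a dominated-convergence argument along sequences; these are equivalent formulations of the same step.
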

\begin{proof}
By Lemma \ref{l:firpat} and Equation \eqref{eq:traconsum} it is enough to show that the path
\[
t \mapsto \int_0^\infty \mu^{-1/2} \cd 
\big( R_t^{1 + \mu} \cd X_t^{1 + \mu} + (X_t^{1 + \mu})^* \cd R_t^{1 + \mu} \big) \cd \C D_t \cd (R_0)^n \, d\mu 
\]
is continuous in trace norm.
Therefore, consider the algebra $C\big( [0,1], \sL^1(\C H \op \C H) \big)$ of continuous paths in the Banach algebra $\sL^1(\C H \op \C H)$. This algebra becomes a Banach algebra when equipped with the norm $\| \cd \| : f \mapsto \sup_{t \in [0,1]}\| f(t) \|_1$.

By Lemma \ref{l:traconint} we have that the path
\[
t \mapsto \mu^{-1/2} \cd 
\big( R_t^{1 + \mu} \cd X_t^{1 + \mu} + (X_t^{1 + \mu})^* \cd R_t^{1 + \mu} \big) \cd \C D_t \cd (R_0)^n
\]
determines an element in $C\big( [0,1], \sL^1(\C H \op \C H) \big)$ for all $\mu > 0$. It is therefore enough to show that the integral
\[
\int_0^\infty \mu^{-1/2} \cd \sup_{t \in [0,1]} \big\| 
\big( R_t^{1 + \mu} \cd X_t^{1 + \mu} + (X_t^{1 + \mu})^* \cd R_t^{1 + \mu} \big) \cd \C D_t \cd (R_0)^n \big\|_1 \, d\mu
\]
is finite. But this is also a consequence of Lemma \ref{l:traconint}.
\end{proof}

{\color{black}
We now prove the main result of this paper. It gives sufficient conditions for the invariance of the {\color{black} unbounded} homological index under certain {\color{black} unbounded} perturbations. The proof relies on Theorem \ref{t:hominvII} which in turn is a consequence of our investigations of homotopy invariance in cyclic theory in Section \ref{s:invarpro}.
}

\begin{thm}\label{t:hominvunb}
Suppose that $\C D^+ : \T{Dom}(\C D^+) \to \C H$ and $B^+ : \T{Dom}(B^+) \to \C H$ are densely defined closed unbounded operators which satisfy the conditions in Assumption \ref{a:patrelbou} and Assumption \ref{a:sumpat} for some $n \in \nn$.

Then the homological index of $\C D^+ + B^+$ exists in degree $n \in \nn$ if and only if the homological index of $\C D^+$ exists in degree $n \in \nn$. And in this case we have that
\[
\T{H-Ind}_n(\C D^+) = \T{H-Ind}_n(\C D^+ + B^+)
\]
\end{thm}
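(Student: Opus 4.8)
The plan is to deduce the existence equivalence from Theorem~\ref{t:exihomind} and the numerical equality from Theorem~\ref{t:hominvII}, after transporting everything to the bounded picture via the bounded transform.

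First I would dispose of the existence statement. The summability hypothesis ``$(R_0)^j \cd X_0 \cd (R_0)^k \in \sL^{n/(j+k)}(\C H \op \C H)$ for all $j,k \in \{0,\ldots,n\}$ with $1 \leq j+k \leq n$'', which is part of Assumption~\ref{a:sumpat}, is exactly the hypothesis of Theorem~\ref{t:exihomind}; that theorem therefore immediately gives that $\T{H-Ind}_n(\C D^+ + B^+)$ exists in degree $n$ if and only if $\T{H-Ind}_n(\C D^+)$ does. For the remainder of the proof I would assume that one, hence both, of these indices exists.

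Next I would reduce the equality to the bounded homotopy invariance statement. By the discussion in Section~\ref{s:homindunb}, $\T{H-Ind}_n(\C D^+) = \T{H-Ind}_n(T_0^+)$ and $\T{H-Ind}_n(\C D^+ + B^+) = \T{H-Ind}_n(T_1^+)$, where $T_t^+$ is the bounded transform of $\C D_t^+$, with $R_t^+ = 1 - (T_t^+)^* T_t^+$, $R_t^- = 1 - T_t^+ (T_t^+)^*$ and $R_t = \ma{cc}{R_t^+ & 0 \\ 0 & R_t^-} = (1 + \C D_t^2)^{-1}$ the resolvent from Section~\ref{s:exiunbhom}. It then suffices to check that the path $t \mapsto T_t^+$, which is continuously differentiable in operator norm by Assumption~\ref{a:patrelbou}, satisfies the remaining hypotheses of Theorem~\ref{t:hominvII} in degree $n$: that $(R_t^+)^n - (R_t^-)^n \in \sL^1(\C H)$ for each $t$, that $\frac{d((R_t)^n)}{dt} \in \sL^1(\C H \op \C H)$ with $t \mapsto \frac{d((R_t)^n)}{dt}$ continuous in trace norm, and that $\frac{d T_t}{dt} \cd (R_t)^n \in \sL^1(\C H \op \C H)$ with $t \mapsto \frac{d T_t}{dt} \cd (R_t)^n$ continuous in trace norm. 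The first follows from Proposition~\ref{p:resdiflam}, which gives $R_t^n - R_0^n \in \sL^1(\C H \op \C H)$ for every $t$; since the index exists at $t = 0$ the difference of the diagonal blocks of $R_0^n$ is trace class, hence so is that of $R_t^n$. The second is Proposition~\ref{p:traconder} and the third is Proposition~\ref{p:consqu}. Theorem~\ref{t:hominvII} then gives that $(R_0^+)^n - (R_0^-)^n - (R_1^+)^n + (R_1^-)^n$ is trivial in $H_0^\la(K)$, so pairing with the trace cocycle yields $\T{H-Ind}_n(T_0^+) = \T{H-Ind}_n(T_1^+)$, and hence $\T{H-Ind}_n(\C D^+) = \T{H-Ind}_n(\C D^+ + B^+)$.

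I expect most of the difficulty to have already been absorbed by the preceding results, so that the final theorem is mainly a matter of assembly; the genuinely hard inputs are the trace-ideal-valued continuity of $t \mapsto R_t^m - R_0^m$ from Lemma~\ref{l:tecsum} and Proposition~\ref{p:resdiflam} (obtained from the resolvent factorisation of Lemma~\ref{l:resexpa}) and the trace-norm continuity in Proposition~\ref{p:consqu}, where the extra hypothesis $X_0 \cd (R_0)^{n-\ep} \in \sL^1(\C H \op \C H)$ from Assumption~\ref{a:sumpat} is exactly what makes the $\mu$-integral in \eqref{eq:traconsum} converge in trace norm uniformly in $t$. The main point to be careful about at this last stage is that Assumption~\ref{a:sumpat} packages precisely the summability and trace-norm-continuity data required by Theorem~\ref{t:hominvII}, and that existence of the index at one endpoint propagates along the whole path via Proposition~\ref{p:resdiflam}.
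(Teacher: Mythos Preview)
Your proposal is correct and follows essentially the same route as the paper, which simply cites Theorem~\ref{t:exihomind}, Theorem~\ref{t:hominvII}, Proposition~\ref{p:traconder} and Proposition~\ref{p:consqu} together with the reduction to bounded transforms discussed at the start of Section~\ref{s:invunbhom}. Your write-up merely makes explicit how these pieces feed into the hypotheses of Theorem~\ref{t:hominvII} (in particular the observation that $(R_t^+)^n - (R_t^-)^n \in \sL^1(\C H)$ for all $t$ via Proposition~\ref{p:resdiflam}), which the paper leaves to the reader.
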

\begin{proof}
{\color{black} This is a consequence of Theorem \ref{t:exihomind}, Theorem \ref{t:hominvII}, Proposition \ref{p:traconder} and Proposition \ref{p:consqu}. See the discussion in the beginning of this Section.}
\end{proof}

\section{Appendix I: Dirac operators on Euclidean space}
Let $n \in \nn$ and consider the complex Clifford algebra $\cc_{2n-1}$ over $\rr^{2n-1}$. Recall that this is the unital $*$-algebra with $(2n-1)$ generators $e_1,\ldots,e_{2n-1}$ such that
\[
\arr{ccc}{
e_j = e_j^* & \T{and} & e_j e_k + e_k e_j = 2 \de_{jk}
}
\]
for all $j,k \in \{1,\ldots,2n-1\}$, where $\de_{jk}$ denotes the Kronecker delta. Fix an irreducible representation $\pi_{2n-1} : \cc_{2n-1} \to \sL(\cc^{2^{n-1}})$ and apply the notation $c_j := \pi_{2n-1}(e_j)$ for all $j \in \{1,\ldots,2n-1\}$.

Let $N \in \nn$ and let $a_j : \rr^{2n} \to M_N(\cc)$ be a matrix valued function for each $j \in \{1,\ldots,2n\}$. Suppose that $a_j : \rr^{2n} \to M_N(\cc)$ is bounded and measurable with respect to Lebesgue measure and furthermore that $a_j(x) = a_j(x)^*$ for all $x \in \rr^{2n}$ and all $j \in \{1,\ldots,2n\}$.
We may then form the closed unbounded operator
\[
\C D^+_a := \frac{\pa}{\pa x_{2n}} + i a_{2n} + i \sum_{j=1}^{2n-1} c_j \cd \big( \frac{\pa}{\pa x_j} + i a_j\big)
\]
which acts on the Hilbert space $L^2(\rr^{2n}) \ot \cc^{2^{n-1}} \ot \cc^N$. The domain of $\C D^+_a$ is the first Sobolev space $H^1(\rr^{2n}) \ot \cc^{2^{n-1}} \ot \cc^N$.
The (Hilbert space) adjoint of $\C D^+_a$ is the first order differential operator
\[
\C D^-_a := (\C D^+_a)^* = 
- \frac{\pa}{\pa x_{2n}} - i a_{2n} + i \sum_{j=1}^{2n-1} c_j \cd \big( \frac{\pa}{\pa x_j} + i a_j\big)
\]
The domain of $\C D^-_a$ is again the first Sobolev space $H^1(\rr^{2n}) \ot \cc^{2^{n-1}} \ot \cc^N \su L^2(\rr^{2n}) \ot \cc^{2^{n-1}} \ot \cc^N$.
We refer to the selfadjoint unbounded operator
\[
\begin{split}
\C D_a & := \ma{cc}{ 0 & \C D^-_a \\ \C D^+_a & 0}
: \big( H^1(\rr^{2n}) \ot \cc^{2^{n-1}} \ot \cc^N \big) \op \big(H^1(\rr^{2n}) \ot \cc^{2^{n-1}} \ot \cc^N\big) \\
& \q \to 
\big( L^2(\rr^{2n}) \ot \cc^{2^{n-1}} \ot \cc^N \big)
\op \big( L^2(\rr^{2n}) \ot \cc^{2^{n-1}} \ot \cc^N \big)
\end{split}
\]
as the \emph{generalized Dirac operator} associated with the bounded measurable matrix valued one-form $\sum_{j=1}^{2n} a_j dx_j$.
The following vanishing result is a consequence of our general invariance result for the homological index, see Theorem \ref{t:hominvunb}.

\begin{thm}\label{t:homindvan}
Suppose that there exists a $\de > 0$ such that the map
\[
(1 + \sum_{k = 1}^{2n} x_k^2)^{n/2 + \de} \cd a_j : \rr^{2n} \to M_N(\cc)
\]
is square integrable for each $j \in \{1,\ldots,2n\}$. Then the homological index of $\la^{-1/2} \cd \C D_a$ in degree $n$ exists and is equal to zero for all $\la > 0$.
\end{thm}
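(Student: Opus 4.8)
The plan is to realize $\la^{-1/2}\C D^+_a$ as a bounded perturbation of the free Dirac operator and then invoke Theorem~\ref{t:hominvunb}. Fix $\la > 0$, write $\C D^+_0$ for the operator obtained from $\C D^+_a$ by setting all $a_j = 0$, and put $\C D^+ := \la^{-1/2}\C D^+_0$ and $B^+ := \la^{-1/2}\big(i a_{2n} - \sum_{j=1}^{2n-1} c_j a_j\big)$. Then $B^+$ is multiplication by a bounded measurable matrix-valued function, both $\C D^+$ and $B^+$ are closed and densely defined with $\T{Dom}(\C D^+) = H^1(\rr^{2n}) \ot \cc^{2^{n-1}} \ot \cc^N \su \T{Dom}(B^+)$, and $\C D^+ + B^+ = \la^{-1/2}\C D^+_a$ on $\T{Dom}(\C D^+)$. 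The free operator is handled by the constant-coefficient Bochner identity: expanding the relevant products and using $c_j c_k + c_k c_j = 2\de_{jk}$ together with the constancy of the $c_j$ gives $\C D^-_0 \C D^+_0 = \C D^+_0 \C D^-_0 = -\sum_{k=1}^{2n}\pa_k^2$, the scalar Laplacian tensored with the identity on $\cc^{2^{n-1}} \ot \cc^N$; hence $(1 + \la^{-1}\C D^-_0 \C D^+_0)^{-n} - (1 + \la^{-1}\C D^+_0 \C D^-_0)^{-n} = 0 \in \sL^1(\C H)$, and the homological index of $\C D^+$ in degree $n$ exists and equals $\T{Tr}(0) = 0$. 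So it only remains to check the hypotheses of Theorem~\ref{t:hominvunb} for the pair $(\C D^+, B^+)$ in degree $n$; that theorem then gives that $\T{H-Ind}_n(\la^{-1/2}\C D^+_a)$ exists and equals $\T{H-Ind}_n(\C D^+) = 0$, and since $\la > 0$ was arbitrary this is the assertion.

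Because $B^+$ is bounded, Assumption~\ref{a:patrelmai} and the differentiability clause of Assumption~\ref{a:patrelbou} are routine. The domain inclusions are trivial; $\C D_t := \C D + t B$ is selfadjoint on $\T{Dom}(\C D)$ for every $t \in [0,1]$; one takes $\C E = \C H \op \C H$; and $X_t = B(i + \C D_t)^{-1}$ is bounded with $\|X_t\| \le \|B\|$ uniformly in $t$. Norm-differentiability of $t \mapsto T_t = \C D_t(1 + \C D_t^2)^{-1/2}$ follows because $t \mapsto \C D_t$ is affine with bounded increment --- one differentiates the integral representation of $(1 + \C D_t^2)^{-1/2}$, or quotes the standard perturbation theory gathered in the Appendix.

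The substance of the argument is the verification of Assumption~\ref{a:sumpat} at $s = 0$: that $R_0^j \cd X_0 \cd R_0^k \in \sL^{n/(j+k)}(\C H \op \C H)$ for all $j, k$ with $1 \le j + k \le n$, and that $X_0 \cd R_0^{n-\ep} \in \sL^1(\C H \op \C H)$ for some $\ep \in (0,1/2)$. Here $R_0 = (1 + \la^{-1}\C D_0^2)^{-1}$ is a function of the momentum operator $P = -i\nabla$ on $\rr^{2n}$ decaying like $\langle\xi\rangle^{-2}$, while $X_0 = B(i + \la^{-1/2}\C D_0)^{-1}$ is multiplication by the bounded matrix-valued function $m$ assembled from the $a_j$ and the $c_j$, composed with a function $\rho(P)$ of $P$ decaying like $\langle\xi\rangle^{-1}$. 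Each operator to be estimated therefore has the form $g_1(P)\,m(X)\,g_2(P)$ with $g_1, g_2$ powers of $\langle\xi\rangle^{-1}$; I would bound its Schatten norm by factoring $m = m_1 m_2$ through the pointwise polar decomposition, applying to each factor $g_i(P)m_i(X)$ or $m_i(X)g_i(P)$ the Kato--Seiler--Simon estimate $\|f(X)g(P)\|_{\sL^p} \le C_{p,2n}\|f\|_{L^p(\rr^{2n})}\|g\|_{L^p(\rr^{2n})}$ when $p \ge 2$ and the Birman--Solomyak criterion when $p \le 2$ (membership in $\sL^p$ being governed by the $\ell^p$-norm of the sequence of local $L^2$-norms of $f$ and of $g$ over unit cubes), and then using Hölder's inequality for Schatten classes. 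The hypothesis $(1 + \sum_k x_k^2)^{n/2 + \de} a_j \in L^2(\rr^{2n})$, i.e.\ $\langle x\rangle^{n + 2\de} a_j \in L^2(\rr^{2n})$, is exactly what places $m$ and its fractional powers $|m|^\theta$ into the classes $\ell^p(L^2(\rr^{2n}))$ for the range of $p$ the bookkeeping requires: the weight is calibrated to the dimension $2n$ and the degree $n$, and the strict inequality $\de > 0$ supplies the slack needed to reach Schatten exponents below $2$. For the $\sL^1$-statement, $X_0 \cd R_0^{n-\ep}$ is $m(X)$ composed with a function of $P$ decaying like $\langle\xi\rangle^{-(2n + 1 - 2\ep)}$, whose sequence of local $L^2$-norms lies in $\ell^1(\mathbb{Z}^{2n})$ precisely because $2n + 1 - 2\ep > 2n$, i.e.\ $\ep < 1/2$; no splitting of $m$ is needed here.

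The step I expect to be the main obstacle is exactly this exponent bookkeeping in the Hölder splitting of $g_1(P)\,m(X)\,g_2(P)$ for the borderline indices --- for instance $k = 0$ with $j$ close to $n$, where the symmetric factorization $m = |m|^{1/2}\cd|m|^{1/2}$ loses too much and one must instead use an asymmetric one, $m = U\,|m|^{1-\theta}\cd|m|^{\theta}$ with $\theta$ small (of order $\de/(n+\de)$), so that the factor paired with the weaker momentum decay carries almost all of the spatial decay. Once these Schatten estimates are assembled, Assumption~\ref{a:sumpat} holds for $(\C D^+, B^+)$ in degree $n$; Theorem~\ref{t:exihomind} gives existence, Proposition~\ref{p:traconder} and Proposition~\ref{p:consqu} give the two trace-norm continuity properties required by Theorem~\ref{t:hominvII}, and Theorem~\ref{t:hominvunb} then yields $\T{H-Ind}_n(\la^{-1/2}\C D^+_a) = \T{H-Ind}_n(\C D^+) = 0$, completing the proof.
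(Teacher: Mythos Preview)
Your proposal is correct and follows essentially the same route as the paper: compare $\la^{-1/2}\C D_a^+$ with the free operator $\la^{-1/2}\C D_0^+$ (whose homological index vanishes by normality, $\C D_0^-\C D_0^+ = \C D_0^+\C D_0^- = -\lap$), verify Assumptions~\ref{a:patrelbou} and~\ref{a:sumpat} for the bounded perturbation, and invoke Theorem~\ref{t:hominvunb}. The paper carries out the Schatten estimates you sketch as two lemmas (Lemma~\ref{l:firsch} and Lemma~\ref{l:secsch}), using exactly the Kato--Seiler--Simon bound for $p \geq 2$ and the Birman--Solomyak $l^p(L^2)$ criterion for $p \leq 2$, together with the asymmetric polar-decomposition splitting you anticipate; for the differentiability of $t \mapsto T_t$ it cites \cite[Proposition~2.10]{CaPh:UFS} rather than the Appendix, which only treats $T_t(i+\C D)^{-1}$.
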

\begin{proof}
To ease the notation, let $\C H := L^2(\rr^{2n}) \ot \cc^{2^{n-1}} \ot \cc^N$.
Consider the generalized Dirac operator associated with the trivial matrix valued one-form,
\[
\C D_0 := \ma{cc}{0 & \C D_0^- \\ \C D_0^+ & 0}
\]
Remark that $\C D_0^+ : H^1(\rr^{2n}) \ot \cc^{2^{n-1}} \ot \cc^N \to \C H$ is normal with
\[
\C D_0^- \C D_0^+ = -\sum_{j =1}^{2n} \frac{\pa^2}{\pa x_j^2} = \C D_0^+ \C D_0^-
\]
The domain of the Laplacian $\lap := -\sum_{j =1}^{2n} \frac{\pa^2}{\pa x_j^2}$ is the subspace $H^2(\rr^{2n}) \ot \cc^{2^{n-1}} \ot \cc^N$ where $H^2(\rr^{2n})$ denotes the second Sobolev space.

The normality of $\C D_0^+$ implies that the homological index of $\la^{-1/2} \C D_0^+$ exists and is trivial in all degrees for all $\la > 0$.
Notice now that the generalized Dirac operator $\C D_a^+$ is a bounded perturbation of $\C D_0^+$. Indeed, we have that
\[
\C D_a^+ = \C D_0^+ + i \cd (1 \ot a_{2n}) - c_j \ot a_j
\]
where the bounded operator
\[
A^+ := i \cd (1 \ot  a_{2n}) - c_j \ot a_j : \C H \to \C H
\]
acts by pointwise matrix multiplication.

By Theorem \ref{t:hominvunb} it is therefore enough to show that the closed unbounded operator $\C D_0^+ : \T{Dom}(\C D_0^+) \to \C H$ and the bounded operator $A^+ \in \sL( \C H ) $ satisfy the conditions in Assumption \ref{a:patrelbou} and Assumption \ref{a:sumpat}.
Since $A^+$ is a bounded operator, the conditions in Assumption \ref{a:patrelmai} can be verified immediately. Furthermore, it follows from \cite[Proposition 2.10]{CaPh:UFS} that the path of bounded transform
$
t \mapsto \C D_{t \cd a} \cd \big( 1 + \C D_{t \cd a}^2 \big)^{-1/2}
$
is continuously differentiable in operator norm. We have thus verified the conditions in Assumption \ref{a:patrelbou}.

It remains to verify that
\begin{equation}\label{eq:intpowsch}
(1 + \lap)^{-j} \cd A^+ \cd (1 + \lap)^{-k-1/2} \in \sL^{n/(j+k)}(\C H)
\end{equation}
for all $j,k \in \{0,\ldots,n\}$ with $1 \leq j + k \leq n$ and that
\begin{equation}\label{eq:frapowsch}
A^+ \cd (1 + \lap)^{-n - 1/2 + \ep} \, \, , \, \, (A^+)^* \cd (1 + \lap)^{-n - 1/2 + \ep}  \in \sL^1(\C H)
\end{equation}
for some $\ep \in (0,1/2)$. See Assumption \ref{a:sumpat}.

{\color{black}
Using our conditions on the bounded measurable maps $a_j : \rr^{2n} \to M_N(\cc)$ we see that \eqref{eq:intpowsch} and \eqref{eq:frapowsch} are consequences of the next two Lemmas. Their proofs will conclude the proof of the Theorem.
}
\end{proof}

\begin{lemma}\label{l:firsch}
Let $f : \rr^m \to \cc$ be a bounded measurable function and let $p \in [1,\infty)$. Suppose that the function
\[
(1 + \sum_{k=1}^m x_k^2)^{\frac{m}{4p} + \de} \cd f : \rr^m \to \cc
\]
lies in $L^{2p}(\rr^m)$ for some $\de > 0$. Then the bounded operator
\[
f \cd (1 + \lap)^{-\frac{m}{2p} - \ep} : L^2(\rr^m) \to L^2(\rr^m)
\]
lies in the Schatten ideal $\sL^p( L^2(\rr^m))$ for all $\ep > 0$.
\end{lemma}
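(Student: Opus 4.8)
The plan is to recognise the operator $f\cd(1+\lap)^{-m/(2p)-\ep}$ as the product of a multiplication operator and a Fourier multiplier, and then to reduce the Schatten-class claim to a Cwikel-type estimate after a routine bookkeeping of Lebesgue exponents. Put $s:=\tfrac{m}{2p}+\ep$ and let $g(\xi):=(1+|\xi|^2)^{-s}$, so that $(1+\lap)^{-s}=g(-i\nabla)$ is the Fourier multiplier with symbol $g$ and the operator under consideration is $M_f\,g(-i\nabla)$, where $M_f$ denotes multiplication by $f$ (a bounded operator, since $f\in L^\infty$). I would then look for a single exponent $q_1<p$ for which both symbols are $q_1$-integrable.

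For the momentum side one has $g\in L^q(\rr^m)$ if and only if $2sq>m$, that is, $q>\tfrac{m}{2s}=\tfrac{mp}{m+2p\ep}$, and the right-hand side is strictly smaller than $p$ because $\ep>0$. For the position side, write $\langle x\rangle:=\big(1+\textstyle\sum_{k=1}^m x_k^2\big)^{1/2}$ and $a:=\tfrac{m}{4p}+\de$, so that the hypothesis reads $\langle x\rangle^{2a}f\in L^{2p}(\rr^m)$; since $\langle x\rangle^{-2a}\in L^r(\rr^m)$ exactly for $r>\tfrac{m}{2a}$ (and for $r=\infty$), factoring $f=(\langle x\rangle^{2a}f)\cd\langle x\rangle^{-2a}$ and applying H\"older's inequality with $\tfrac1q=\tfrac1{2p}+\tfrac1r$ shows $f\in L^q(\rr^m)$ for every $q>\tfrac{mp}{m+2p\de}$, and this threshold is again strictly smaller than $p$ because $\de>0$. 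Hence one may fix $q_1\in\big(\max\{\tfrac{mp}{m+2p\ep},\tfrac{mp}{m+2p\de}\}\,,\,p\big)$, and then $f\in L^{q_1}(\rr^m)$ while $g\in L^{q_1}(\rr^m)\su L^{q_1,\infty}(\rr^m)$.

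I would now invoke the Cwikel--Birman--Solomyak bound for operators of the form $M_f\,g(-i\nabla)$ (see \cite[Chapter 4]{Sim:TIA} and the references therein; for $q_1\ge 2$ the elementary estimate $\|M_f g(-i\nabla)\|_{q_1}\le(2\pi)^{-m/q_1}\|f\|_{L^{q_1}}\|g\|_{L^{q_1}}$ already suffices), which gives $f\cd(1+\lap)^{-s}=M_f\,g(-i\nabla)\in\sL^{q_1,\infty}\big(L^2(\rr^m)\big)$. Since $q_1<p$ we have the inclusion $\sL^{q_1,\infty}\su\sL^p$ of (weak) Schatten ideals, and therefore $f\cd(1+\lap)^{-m/(2p)-\ep}\in\sL^p\big(L^2(\rr^m)\big)$, as asserted. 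The only step that requires care is citing the Cwikel-type inequality in precisely the form needed — in particular when the auxiliary exponent $q_1$ must be taken below $1$, which happens exactly in the trace-class case $p=1$; the strict inequalities coming from $\ep>0$ and $\de>0$ are what create the room to choose $q_1<p$ and thereby to pass from the weak Schatten class back to $\sL^p$.
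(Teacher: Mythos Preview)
Your reduction works when the auxiliary exponent $q_1$ exceeds $2$, but the weak-type Cwikel bound (placing $M_f\,g(-i\nabla)$ in $\sL^{q_1,\infty}$ from $f\in L^{q_1}$ and $g\in L^{q_1,\infty}$) is only available for $q_1>2$; this is \cite[Theorem~4.2]{Sim:TIA}, and there are well-known counterexamples at and below $2$. Since you insist on $q_1<p$, the argument as written covers only $p>2$: for $p\in[1,2]$ the exponent $q_1$ is forced below $2$ and no Cwikel-type weak estimate is available. Your self-diagnosis locating the difficulty at $q_1<1$ is therefore off --- the gap opens already at $q_1\le 2$.

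The paper separates the two regimes. For $p\ge 2$ it shows directly that $f\in L^p$ and that the symbol $(1+|\xi|^2)^{-m/(2p)-\ep}$ lies in $L^p$, then applies \cite[Theorem~4.1]{Sim:TIA}; your own H\"older computation already yields $f\in L^p$, so the detour through $q_1<p$ and weak Schatten classes is unnecessary in this range. For $p\in[1,2]$ the paper switches to the Birman--Solomyak criterion \cite[Theorem~4.5]{Sim:TIA}, which gives $M_f\,g(-i\nabla)\in\sL^p$ whenever $f,g\in\ell^p(L^2)$, and then verifies the cube-wise square-summability of $f$ by hand from the weighted-$L^{2p}$ hypothesis. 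That local $L^2$ estimate over unit cubes is the ingredient missing from your proposal and is what is needed to close the range $p\in[1,2)$.
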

\begin{proof}
Suppose first that $p \geq 2$. By \cite[Theorem 4.1]{Sim:TIA} it is enough to show that $f \in L^p(\rr^m)$ and that $(1 + \sum_{k=1}^m x_k^2)^{-\frac{m}{2p} - \ep} \in L^p(\rr^m)$ for all $\ep > 0$. It is clear that $(1 + \sum_{k=1}^m x_k^2)^{-\frac{m}{2p} - \ep} \in L^p(\rr^m)$ for all $\ep > 0$ thus we only need to show that $f \in L^p(\rr^m)$.

It follows from our assumptions that there exists a positive integrable function $g : \rr^m \to [0,\infty)$ such that
\[
g \cd (1 + \sum_{k = 1}^m x_k^2)^{-\frac{m}{2} - \de_0} = |f|^{2p}
\]
for some $\de_0 > 0$. This proves that $|f|^p$ is integrable since it is the product of the two square integrable functions $g^{1/2}$ and $(1 + \sum_{k = 1}^m x_k^2)^{-\frac{m}{4} - \frac{\de_0}{2}}$.

Suppose now that $p \in [1,2]$. By \cite[Theorem 4.5]{Sim:TIA} it is then enough to show that $f \in l^p(L^2)$ and that $(1 + \sum_{k = 1}^m x_k^2)^{-\frac{m}{2p} - \ep} \in l^p(L^2)$ for all $\ep > 0$. Recall here that the space $l^p(L^2)$ consists of the measurable functions $g : \rr^m \to \cc$ for which
\[
\| g \|_{2;p} := \Big( \sum_{\al \in \zz^m} \big( \int_{\De_\al} |g|^2 dx \big)^{p/2} \Big)^{1/p} 
\]
is finite, where $\De_\al \su \rr^m$ is the unit cube with center in $\al \in \zz^m$.

Since the function $(1 + \sum_{k = 1}^m x_k^2)^{-\frac{m}{2p} - \ep} : \rr^m \to [0,\infty)$ satisfies the same hypothesis as $f : \rr^m \to [0,\infty)$ it is enough to show that $f \in l^p(L^2)$. To this end, we note that
\[
\begin{split}
\big( \int_{\De_\al} |f|^2 dx \big)^{p/2} 
& \leq \big(1 + \sum_{k=1}^m (\al_j/2)^2\big)^{-\frac{m}{4} - \de \cd p} \cd 
\big( \int_{\De_\al} (1 + \sum_{k=1}^m x_k^2)^{\frac{m}{2p} + 2 \cd \de} |f|^2 \, dx \big)^{p/2} \\
& \leq \big(1 + \sum_{k=1}^m (\al_j/2)^2\big)^{-\frac{m}{4} - \de \cd p}
\cd \big( \int_{\De_\al} (1 + \sum_{k=1}^m x_k^2)^{\frac{m}{2} + 2p \cd \de} |f|^{2p} \, dx \big)^{1/2}
\end{split}
\]
for all $\al \in \zz^m$. This estimate implies that $f \in l^p(L^2)$ since both of the sequences
\[
%\begin{split}
 \al \mapsto \big(1 + \sum_{k=1}^m (\al_j/2)^2\big)^{-\frac{m}{4} - \de \cd p} \q \T{and} \q
\al \mapsto \big( \int_{\De_\al} (1 + \sum_{k=1}^m x_k^2)^{\frac{m}{2} + 2p \cd \de} |f|^{2p} \, dx \big)^{1/2}
%\end{split}
\]
are square summable.
\end{proof}

\begin{lemma}\label{l:secsch}
Let $f : \rr^m \to \cc$ be a bounded measurable function. Suppose that there exists a $\de > 0$ such that
$
(1 + \sum_{k=1}^m x_k^2)^{m/4 + \de} \cd f : \rr^m \to \cc
$
is square integrable. Then
\[
(1 + \lap)^{-s} \cd f \cd (1 + \lap)^{-r-\ep} \in \sL^{\frac{m}{2(s+r)}}\big( L^2(\rr^m)\big)
\]
for all $s,r \geq 0$ with $s + r \in (0,m/2]$ and all $\ep > 0$. 
\end{lemma}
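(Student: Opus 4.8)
The plan is to deduce Lemma~\ref{l:secsch} from Lemma~\ref{l:firsch} by a H\"older factorisation of the operator $(1+\lap)^{-s}\cd f\cd(1+\lap)^{-r-\ep}$, after a preliminary reduction and after disposing of the case $s=0$. First, it suffices to prove the assertion for $\ep>0$ small: if it holds for some $\ep'\in(0,\ep)$ then it holds for $\ep$ as well, since $(1+\lap)^{-r-\ep}=(1+\lap)^{-r-\ep'}\cd(1+\lap)^{\ep'-\ep}$ with $(1+\lap)^{\ep'-\ep}$ bounded and $\sL^{m/(2(s+r))}\big(L^2(\rr^m)\big)$ an ideal. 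Second, the case $s=0$ is immediate: there $(1+\lap)^{-s}=1$, and one applies Lemma~\ref{l:firsch} to $f\cd(1+\lap)^{-r-\ep}$ with $p=m/(2r)$, the hypothesis $(1+\sum_{k=1}^m x_k^2)^{r/2+\de'}f\in L^{m/r}(\rr^m)$ (with $\de':=\de r/m$) following from $(1+\sum_{k=1}^m x_k^2)^{m/4+\de}f\in L^2(\rr^m)$ and the pointwise bound $|f|^{m/r}\leq\|f\|_\infty^{m/r-2}|f|^2$, valid since $r\leq m/2$.

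For the remaining case $s>0$, with $\ep$ small enough that $\ep<2s$ (so also $r+\ep/2<m/2$), I would set
\[
p_1:=\frac{m}{2(s-\ep/2)}, \qquad p_2:=\frac{m}{2(r+\ep/2)},
\]
so that $p_1,p_2\in(1,\infty)$ and $\tfrac1{p_1}+\tfrac1{p_2}=\tfrac{2(s+r)}{m}$. Put $\theta:=1/p_2\in(0,1)$ and fix a measurable factorisation $f=f_1\cd f_2$ with $f_1,f_2:\rr^m\to\cc$ bounded measurable, $|f_1|=|f|^{1-\theta}$ and $|f_2|=|f|^{\theta}$. Then
\[
(1+\lap)^{-s}\cd f\cd(1+\lap)^{-r-\ep}=\big((1+\lap)^{-s}f_1\big)\cd\big(f_2(1+\lap)^{-r-\ep}\big),
\]
and the H\"older inequality for Schatten ideals reduces the lemma to the two claims $(1+\lap)^{-s}f_1\in\sL^{p_1}$ and $f_2(1+\lap)^{-r-\ep}\in\sL^{p_2}$. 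For the first I pass to the adjoint $\big((1+\lap)^{-s}f_1\big)^*=\ov{f_1}\cd(1+\lap)^{-s}$ (same Schatten norm) and invoke Lemma~\ref{l:firsch} with exponent $p_1$: since $\tfrac{m}{2p_1}+\tfrac\ep2=s$, its conclusion is precisely $\ov{f_1}(1+\lap)^{-s}\in\sL^{p_1}$, provided $(1+\sum_{k=1}^m x_k^2)^{m/(4p_1)+\de_1}\ov{f_1}\in L^{2p_1}(\rr^m)$ for some $\de_1>0$, i.e. $\int_{\rr^m}(1+\sum_{k=1}^m x_k^2)^{m/2+2\de_1 p_1}|f|^{2(1-\theta)p_1}\,dx<\infty$; here the constraint $s+r\leq m/2$ is exactly what gives $2(1-\theta)p_1\geq 2$, so boundedness of $f$ bounds the integrand by a multiple of $(1+\sum_{k=1}^m x_k^2)^{m/2+2\de_1 p_1}|f|^2$, which is integrable once $\de_1 p_1\leq\de$ (take $\de_1=\de/p_1$). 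The second claim is handled identically: with $\theta=1/p_2$ one has $2\theta p_2=2$, so the hypothesis of Lemma~\ref{l:firsch} with exponent $p_2$ (and $\tfrac{m}{2p_2}+\tfrac\ep2=r+\ep$) reduces to $\int_{\rr^m}(1+\sum_{k=1}^m x_k^2)^{m/2+2\de_2 p_2}|f|^2\,dx<\infty$, valid for $\de_2=\de/p_2$, yielding $f_2(1+\lap)^{-r-\ep}\in\sL^{p_2}$.

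The genuinely delicate point is the bookkeeping behind the choices of $p_1,p_2,\theta$: the two applications of Lemma~\ref{l:firsch} must reproduce \emph{exactly} the resolvent exponents $-s$ and $-r-\ep$ while simultaneously respecting $\tfrac1{p_1}+\tfrac1{p_2}=\tfrac{2(s+r)}{m}$, and this forces the asymmetric split $\theta=1/p_2$ --- the symmetric choice $|f|^{1/2}\cd|f|^{1/2}$ only works on the boundary $s+r=m/2$ --- while the standing restriction $s+r\leq m/2$ enters through, and only through, the inequality $2(1-\theta)p_1\geq 2$, which is what lets us dominate the fractional power $|f|^{2(1-\theta)p_1}$ by $|f|^2$. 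Everything else --- the H\"older inequality for Schatten norms, the identity $\|A^*\|_p=\|A\|_p$, and the self-adjointness of $(1+\lap)^{-s}$ --- is routine, and with this lemma the proof of Theorem~\ref{t:homindvan} is complete.
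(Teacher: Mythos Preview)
Your proof is correct and follows essentially the same route as the paper: reduce to a direct application of Lemma~\ref{l:firsch} in a boundary case, and otherwise factor $f=f_1f_2$ with $|f_i|=|f|^{\alpha_i}$, apply the H\"older inequality for Schatten ideals with the exponents $p_1=\tfrac{m}{2(s-\ep/2)}$ and $p_2=\tfrac{m}{2(r+\ep/2)}$, and invoke Lemma~\ref{l:firsch} on each factor. The only substantive difference is the choice of $\alpha_i$: the paper takes $\alpha_1=\tfrac{s-\ep/2}{s+r}$, $\alpha_2=\tfrac{r+\ep/2}{s+r}$ (so that both resulting $|f|$-exponents become $\tfrac{m}{s+r}$), whereas you take $\alpha_2=\theta=1/p_2$ (so that the second exponent becomes exactly $2$); both choices work, so your closing remark that the split is ``forced'' is a slight overstatement, though it does not affect the argument.
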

\begin{proof}
Let $s,r \geq 0$ with $s + r \in (0, m/2]$ and $\ep > 0$ be fixed.
Suppose first that $s \in [0,\ep/2]$. By Lemma \ref{l:firsch} it is then enough to show that the measurable function
\[
(1 + \sum_{k=1}^m x_k^2)^{\frac{r + s}{2} + \de_0} \cd f : \rr^m \to \cc
\]
lies in $L^{\frac{m}{r+s}}(\rr^m)$ for some $\de_0 > 0$. Or in other words that
\[
(1 + \sum_{k = 1}^m x_k^2)^{\frac{m}{4} + \de} \cd |f|^{\frac{m}{2(r+s)}} : \rr^m \to [0,\infty) 
\]
is square integrable for some $\de > 0$. But this is a consequence of our asssumptions since $\frac{m}{2(r+s)} \geq 1$ and $f : \rr^m \to \cc$ is bounded.

Suppose now that $s > \ep/2$. Consider the polar decomposition $f = u \cd |f|$. Thus, $u : \rr^m \to \cc$ is a measurable function with $|u|=1$. By the H\"older inequality it is then enough to show that
\[
\begin{split}
& |f|^{\frac{s - \ep/2}{s + r}} \cd (1 + \lap)^{-s} \in \sL^{\frac{m}{2(s-\ep/2)}}\big( L^2(\rr^m)\big)\q \T{and} \\
& |f|^{\frac{r + \ep/2}{s + r}} \cd (1 + \lap)^{-r - \ep} \in \sL^{\frac{m}{2(r + \ep/2)}}\big( L^2(\rr^m)\big)
\end{split}
\]
For symmetry reasons, we may restrict our attention to the first of these bounded operators.

By Lemma \ref{l:firsch} it suffices to prove that the measurable function
\[
(1 + \sum_{k = 1}^m x_k^2)^{\frac{s - \ep/2}{2} + \de_0} \cd |f|^{\frac{s - \ep/2}{s + r}} : \rr^m \to \cc
\]
lies in $L^{\frac{m}{s - \ep/2}}(\rr^m)$ for some $\de_0 > 0$. Or equivalently that
\[
(1 + \sum_{k = 1}^m x_k^2)^{\frac{m}{4} + \de} \cd |f|^{\frac{m}{2(s + r)}} : \rr^m \to \cc
\]
is square integrable for some $\de > 0$. But this follows from our assumptions since $\frac{m}{2(s + r)} \geq 1$ and $f : \rr^m \to \cc$ is bounded.

\end{proof}

%\end{proof}

\section{Appendix II: Perturbations of unbounded operators}
Throughout this Appendix $\C D^+ : \T{Dom}(\C D^+) \to \C H$ and $A^+ : \T{Dom}(A^+) \to \C H$ will be two \emph{closed} densely defined unbounded operators. The Hilbert space adjoints of $\C D^+$ and $A^+$ will be denoted by $\C D^- := (\C D^+)^*$ and $A^- := (A^+)^*$. 
%
%The closures of $\C D^+$ and $A^+$ will be denoted by $\ov{\C D^+} : \T{Dom}\big( \, \ov{\C D^+} \, \big) \to \C H$ and $\ov{A^+} : \T{Dom}\big( \, \ov{A^+} \, \big) \to \C H$.

We remark that
\[
\begin{split}
& \C D := \ma{cc}{0 & \C D^- \\ \C D^+ & 0} : \T{Dom}(\C D^+) \op \T{Dom}(\C D^-) \to \C H \op \C H \q \T{and} \\
& A := \ma{cc}{0 & A^- \\ A^+ & 0} : \T{Dom}(A^+) \op \T{Dom}(A^-) \to \C H \op \C H
\end{split}
\]
are selfadjoint unbounded operators.

\begin{lemma}\label{l:clopat}
Suppose that $\T{Dom}(\C D^+) \su \T{Dom}(A^+)$ and that $\T{Dom}(\C D^-) \su \T{Dom}(A^-)$. Then the unbounded operator
$
\C D^+ + t \cd A^+ : \T{Dom}(\C D^+) \to \C H
$
is closable for each $t \in [0,1]$.
\end{lemma}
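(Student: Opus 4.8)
The plan is to prove closability by showing that the Hilbert space adjoint of $\C D^+ + t\cd A^+ : \T{Dom}(\C D^+) \to \C H$ is densely defined; for a densely defined operator on a Hilbert space this is equivalent to being closable. Since $\T{Dom}(\C D^+)$ is dense, the operator $\C D^+ + t\cd A^+$ is indeed densely defined, so it suffices to produce a dense subspace of $\C H$ contained in $\T{Dom}\big( (\C D^+ + t\cd A^+)^* \big)$.

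The natural candidate is $\T{Dom}(\C D^-)$, which is dense in $\C H$ because $\C D^+$ is closed and densely defined, so that $\C D^- = (\C D^+)^*$ is (closed and) densely defined. I would then fix $x \in \T{Dom}(\C D^+)$ and $y \in \T{Dom}(\C D^-)$ and invoke the two domain inclusions $\T{Dom}(\C D^+) \su \T{Dom}(A^+)$ and $\T{Dom}(\C D^-) \su \T{Dom}(A^-)$, together with $A^- = (A^+)^*$, to carry out the computation
\[
\binn{(\C D^+ + t\cd A^+)x,\, y} = \binn{\C D^+ x,\, y} + t\cd \binn{A^+ x,\, y} = \binn{x,\, \C D^- y} + t\cd \binn{x,\, A^- y} = \binn{x,\, (\C D^- + t\cd A^-)y} .
\]
Here $\binn{A^+ x, y} = \binn{x, A^- y}$ is legitimate precisely because $x \in \T{Dom}(A^+)$ and $y \in \T{Dom}(A^-) = \T{Dom}\big((A^+)^*\big)$, and likewise $\binn{\C D^+ x, y} = \binn{x, \C D^- y}$ uses $y \in \T{Dom}(\C D^-) = \T{Dom}\big((\C D^+)^*\big)$. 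This identity shows that $y \in \T{Dom}\big( (\C D^+ + t\cd A^+)^* \big)$ with $(\C D^+ + t\cd A^+)^* y = (\C D^- + t\cd A^-)y$, hence the adjoint is densely defined and $\C D^+ + t\cd A^+$ is closable, uniformly in $t \in [0,1]$.

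I do not anticipate a genuine obstacle: the argument is essentially a bookkeeping exercise with domains, the only points needing care being the justification of the pairing identities above (guaranteed by the hypotheses) and the invocation of the standard equivalence between closability of a densely defined operator and dense definedness of its adjoint. An alternative, slightly longer route would first apply the closed graph theorem to the inclusion $\big(\T{Dom}(\C D^+), \| \cd \|_{\C D^+}\big) \hookrightarrow \big(\T{Dom}(A^+), \| \cd \|_{A^+}\big)$ — which has closed graph since both graph norms dominate the $\C H$-norm — to obtain a relative bound $\| A^+ x \| \leq C\big( \|x\| + \|\C D^+ x\| \big)$ on $\T{Dom}(\C D^+)$, and then check directly that $x_k \to 0$ in $\C H$ with $\big\{ (\C D^+ + t\cd A^+)x_k \big\}$ convergent forces the limit to vanish; this relative boundedness is in any case useful elsewhere in the Appendix, but for the present lemma I would prefer the adjoint argument for its brevity.
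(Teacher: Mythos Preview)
Your proof is correct and follows the same approach as the paper: both show closability by exhibiting $\T{Dom}(\C D^-)$ as a dense subspace contained in the domain of the adjoint via the pairing identity $\inn{(\C D^+ + t\cd A^+)x, y} = \inn{x, (\C D^- + t\cd A^-)y}$. The paper's version is terser (it simply cites Reed--Simon for the closability criterion and writes down the identity), while you spell out the domain justifications and mention the relative-boundedness alternative, but the substance is identical.
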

\begin{proof}
Let $t \in [0,1]$. By \cite[Theorem VIII.1]{ReSi:MMPI} it is enough to show that $\C D^+ + t \cd A^+ : \T{Dom}(\C D^+) \to \C H$ has a densely defined adjoint. But this is immediate since
\[
\inn{\xi , (\C D^+ + t \cd A^+)\eta} = \inn{(\C D^- + t \cd A^-)\xi, \eta}
\]
for all $\xi \in \T{Dom}(\C D^-)$ and all $\eta \in \T{Dom}(\C D^+)$.
\end{proof}

With the conditions of Lemma \ref{l:clopat} we apply the notation
\[
\C D_t^+ := \ov{\C D^+ + t \cd A} : \T{Dom}(\C D_t^+) \to \C H \q \T{and} \q
\C D_t^- := (\C D_t^+)^* : \T{Dom}(\C D_t^-) \to \C H
\]
for the closure of $\C D^+ + t \cd A : \T{Dom}(\C D^+) \to \C H$ and its adjoint.
We remark that the unbounded operator
\[
\C D_t := \ma{cc}{ 0 & \C D_t^- \\ \C D_t^+ & 0} : \T{Dom}(\C D^+_t) \op \T{Dom}(\C D^-_t) \to \C H \op \C H
\]
is selfadjoint for all $t \in [0,1]$.

\begin{assu}\label{a:patrelbouI}
Suppose that the following holds:
\begin{enumerate}
\item $\T{Dom}(\C D^+) \su \T{Dom}(A^+)$ and $\T{Dom}(\C D^-) \su \T{Dom}(A^-)$.
\item There exists a dense subspace $\C E \su \C H \op \C H$ such that $(i + \C D_t)^{-1}(\xi) \in \T{Dom}(A)$ for all $\xi \in \C E$ and all $t \in [0,1]$.
\item The unbounded operator $A \cd (i + \C D_t)^{-1} : \C E \to \C H \op \C H$ extends to a bounded operator $X_t : \C H \op \C H \to \C H \op \C H$ for all $t \in [0,1]$.
\end{enumerate}
\end{assu}

\emph{Unless explicitly mentioned, the conditions of Assumption \ref{a:patrelbouI} will remain in effect throughout this Appendix.}

\begin{lemma}
We have that $\T{Dom}(\C D_t) \su \T{Dom}(A)$ for all $t \in [0,1]$.
\end{lemma}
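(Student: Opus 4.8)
The plan is to exploit that $\C D_t$ is selfadjoint, so that the resolvent $(i + \C D_t)^{-1} : \C H \op \C H \to \C H \op \C H$ is a bounded everywhere-defined operator whose range is exactly $\T{Dom}(\C D_t)$. Hence every $\zeta \in \T{Dom}(\C D_t)$ can be written as $\zeta = (i + \C D_t)^{-1}(\eta)$ with $\eta := (i + \C D_t)(\zeta) \in \C H \op \C H$, and it therefore suffices to prove that $(i + \C D_t)^{-1}(\eta) \in \T{Dom}(A)$ for \emph{every} $\eta \in \C H \op \C H$.

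To establish this I would use the density of $\C E$ together with the fact that $A$, being selfadjoint, is closed. Fix $\eta \in \C H \op \C H$ and choose a sequence $(\eta_m)$ in $\C E$ with $\eta_m \to \eta$. By part (2) of Assumption \ref{a:patrelbouI}, $(i + \C D_t)^{-1}(\eta_m) \in \T{Dom}(A)$ for each $m$, and since $(i + \C D_t)^{-1}$ is bounded we have $(i + \C D_t)^{-1}(\eta_m) \to (i + \C D_t)^{-1}(\eta)$. By part (3), $A (i + \C D_t)^{-1}(\eta_m) = X_t(\eta_m)$, which converges to $X_t(\eta)$ because $X_t$ is bounded. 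Closedness of $A$ now yields $(i + \C D_t)^{-1}(\eta) \in \T{Dom}(A)$, with $A (i + \C D_t)^{-1}(\eta) = X_t(\eta)$.

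Combining the two steps gives $\T{Dom}(\C D_t) \su \T{Dom}(A)$ for all $t \in [0,1]$; as a byproduct the argument also records the operator identity $A (i + \C D_t)^{-1} = X_t$ on all of $\C H \op \C H$, which is convenient for the later estimates. There is no real obstacle here: the only point needing care is the standard spectral-theoretic fact that $\T{Range}\big((i+\C D_t)^{-1}\big) = \T{Dom}(\C D_t)$ for the selfadjoint operator $\C D_t$, and the use of closedness of $A$ to upgrade the conclusion from the dense subspace $\C E$ to the full range of the resolvent.
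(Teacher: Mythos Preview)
Your argument is correct and is essentially identical to the paper's proof: both show that $(i+\C D_t)^{-1}(\eta)\in\T{Dom}(A)$ for every $\eta\in\C H\op\C H$ by approximating $\eta$ with a sequence in $\C E$ and invoking boundedness of the resolvent and of $X_t$ together with closedness of $A$. The only cosmetic difference is that you spell out explicitly that $\T{Range}\big((i+\C D_t)^{-1}\big)=\T{Dom}(\C D_t)$, whereas the paper leaves this implicit.
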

\begin{proof}
Let $t \in [0,1]$ and let $\xi \in \C H \op \C H$. 
Since $\C E \su \C H \op \C H$ is a dense subspace we may choose a sequence $\{\xi_n\}$ in $\C E$ which converges to $\xi$.
We then have that
\[
(i + \C D_t)^{-1}(\xi_n) \to (i + \C D_t)^{-1}\xi \q \T{and} \q X_t(\xi_n) \to X_t(\xi)
\]
But this implies that $(i + \C D_t)^{-1}(\xi) \in \T{Dom}(A)$ since $A$ is closed and since $X_t(\xi_n) = A \cd (i + \C D_t)^{-1}(\xi_n)$ for all $n \in \nn$.
\end{proof}

For each $\la > 0$ and each $t \in [0,1]$ we introduce the bounded operators
\[
X_t^\la := A \cd (i \cd \la^{1/2} + \C D_t)^{-1} \, \T{ and } \, R_t^\la := (\la + \C D_t^2)^{-1} : \C H \op \C H \to \C H \op \C H
\]
The next Lemma is crucial:

\begin{lemma}\label{l:resideI}
We have the identities
\begin{equation}\label{eq:resideI}
\begin{split}
(i \cd \la^{1/2} + \C D_t)^{-1} - (i \cd \la^{1/2} + \C D)^{-1} 
& = - t \cd (i \cd \la^{1/2} + \C D_t)^{-1} \cd X_0^\la \\
& = - t \cd (i \cd \la^{1/2} + \C D)^{-1} \cd X_t^\la
\end{split}
\end{equation}
for all $t \in [0,1]$ and all $\la > 0$.
\end{lemma}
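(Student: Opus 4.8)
The statement is the second resolvent identity for the selfadjoint operators $\C D$ and $\C D_t = \ov{\C D + t\cd A}$, and the only delicate point is keeping track of domains, since at this stage the equality $\T{Dom}(\C D_t) = \T{Dom}(\C D)$ is not yet available. I would begin with the preliminaries: for $\la > 0$ the number $i\cd\la^{1/2}$ lies off the real axis, hence in the resolvent set of both selfadjoint operators, so $(i\cd\la^{1/2} + \C D)^{-1}$ and $(i\cd\la^{1/2} + \C D_t)^{-1}$ are everywhere defined and bounded, with ranges $\T{Dom}(\C D)$ and $\T{Dom}(\C D_t)$ respectively. By the preceding Lemma (that $\T{Dom}(\C D_t) \su \T{Dom}(A)$) both of these ranges lie inside $\T{Dom}(A)$, so $X_0^\la = A(i\cd\la^{1/2}+\C D)^{-1}$ and $X_t^\la = A(i\cd\la^{1/2}+\C D_t)^{-1}$ are everywhere defined and, since $A$ is closed, bounded by the closed graph theorem.

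The first identity I would obtain by a direct computation. Fix $\xi \in \C H \op \C H$ and set $\eta := (i\cd\la^{1/2}+\C D)^{-1}\xi$; then $\eta \in \T{Dom}(\C D) \su \T{Dom}(\C D_t) \cap \T{Dom}(A)$, the inclusion $\T{Dom}(\C D) \su \T{Dom}(\C D_t)$ holding because $\C D_t$ is by definition the closure of $\C D + t\cd A$ on $\T{Dom}(\C D)$, on which it acts as $\C D + t\cd A$. Hence
\[
(i\cd\la^{1/2} + \C D_t)\eta = (i\cd\la^{1/2} + \C D)\eta + t\cd A\eta = \xi + t\cd X_0^\la \xi ,
\]
and applying the bounded operator $(i\cd\la^{1/2}+\C D_t)^{-1}$ and rearranging yields the first identity.

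For the second identity I would first extract from the first identity the relation $(i\cd\la^{1/2}+\C D_t)^{-1} = (i\cd\la^{1/2}+\C D)^{-1}\big(1 + t\cd X_0^\la\big)$, which shows that $\T{Dom}(\C D_t) = \T{ran}\big((i\cd\la^{1/2}+\C D_t)^{-1}\big) \su \T{ran}\big((i\cd\la^{1/2}+\C D)^{-1}\big) = \T{Dom}(\C D)$; combined with the inclusion noted in the previous paragraph this gives $\T{Dom}(\C D_t) = \T{Dom}(\C D)$. With the domains now matching, the symmetric version of the computation above goes through: put $\zeta := (i\cd\la^{1/2}+\C D_t)^{-1}\xi \in \T{Dom}(\C D_t) = \T{Dom}(\C D)$, so that
\[
\xi = (i\cd\la^{1/2}+\C D_t)\zeta = (i\cd\la^{1/2}+\C D)\zeta + t\cd A\zeta = (i\cd\la^{1/2}+\C D)\zeta + t\cd X_t^\la\xi ,
\]
and applying $(i\cd\la^{1/2}+\C D)^{-1}$ and rearranging produces the second identity. (Alternatively, one may apply the first identity with the roles of $\C D$ and $\C D_t$ interchanged and the perturbation $t\cd A$ replaced by $-t\cd A$, which is legitimate precisely because the two domains agree.) Everything here is routine resolvent algebra; the one point that requires care is the order of events, namely that the first identity must be proved before, and is then used to establish, the domain equality on which the second identity rests.
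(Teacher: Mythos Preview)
Your derivation of the first identity is correct and essentially the same as the paper's. The problem is in your route to the second identity. You claim to extract from the first identity the factorization
\[
(i\la^{1/2}+\C D_t)^{-1} = (i\la^{1/2}+\C D)^{-1}\big(1 + t\,X_0^\la\big),
\]
but rearranging the first identity actually gives the \emph{opposite} factorization:
\[
(i\la^{1/2}+\C D_t)^{-1}\big(1 + t\,X_0^\la\big) = (i\la^{1/2}+\C D)^{-1}.
\]
From this one only reads off $\T{Dom}(\C D) \su \T{Dom}(\C D_t)$, which you already had; the reverse inclusion $\T{Dom}(\C D_t) \su \T{Dom}(\C D)$ does not follow unless you know $1 + t\,X_0^\la$ is invertible, and nothing at this point guarantees $t\,\|X_0^\la\| < 1$. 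Consequently your symmetric computation for the second identity, and your alternative of swapping the roles of $\C D$ and $\C D_t$, are both unjustified: each presupposes the domain equality you have not established.

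The paper sidesteps this entirely. It repeats the first-identity computation with $i\la^{1/2}$ replaced by $-i\la^{1/2}$, obtaining
\[
(-i\la^{1/2}+\C D_t)^{-1} - (-i\la^{1/2}+\C D)^{-1} = -t\,(-i\la^{1/2}+\C D_t)^{-1}\,A\,(-i\la^{1/2}+\C D)^{-1},
\]
and then takes adjoints. Since $\C D$, $\C D_t$ and $A$ are selfadjoint, the adjoint of the left side is the left side of the second identity, and the adjoint of the right side is $-t\,(i\la^{1/2}+\C D)^{-1}\,A\,(i\la^{1/2}+\C D_t)^{-1} = -t\,(i\la^{1/2}+\C D)^{-1}\,X_t^\la$; here one uses that $\T{Dom}(\C D_t) \su \T{Dom}(A)$ so the last expression is everywhere defined. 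No domain equality is needed; indeed, in the paper the equality $\T{Dom}(\C D_t) = \T{Dom}(\C D)$ is proved \emph{afterwards} (Proposition~\ref{p:dompat}), as a consequence of the second identity just obtained.
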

\begin{proof}
Let $t \in [0,1]$ and let $\la > 0$.We 
remark first that it follows from the proof of Lemma \ref{l:clopat} that $\T{Dom}(\C D) \su \T{Dom}(\C D_t)$. Furthermore, we have that $\C D_t(\xi) = (\C D + t \cd A)(\xi)$ for all $\xi \in \T{Dom}(\C D)$.
The first identity in \eqref{eq:resideI} can now be verified immediately.

A similar argument shows that
\[
(-i \cd \la^{1/2} + \C D_t)^{-1} - (-i \cd \la^{1/2} + \C D)^{-1}
= - t \cd (-i \cd \la^{1/2} + \C D_t)^{-1} \cd A \cd (-i \cd \la^{1/2} + \C D)^{-1}
\]
The second identity in \eqref{eq:resideI} now follows by taking adjoints.
\end{proof}

The power of Lemma \ref{l:resideI} is illustrated in the next Proposition.

\begin{prop}\label{p:dompat}
Suppose that $A^+ : \T{Dom}(A^+) \to \C H$ and $\C D^+ : \T{Dom}(\C D^+)$ satisfy the conditions of Assumption \ref{a:patrelbouI}. Then we have that $\C D^+ + t \cd A^+ : \T{Dom}(\C D^+) \to \C H$ is closed and the adjoint is given by $(\C D^+ + t \cd A^+)^* = \C D^- + t \cd A^- : \T{Dom}(\C D^-) \to \C H$ for all $t \in [0,1]$.
\end{prop}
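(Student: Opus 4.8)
The plan is to show that the closure operation involved in defining $\C D_t^+$ does not actually enlarge its domain, and then to transfer this fact — together with the identification of the action — through the selfadjoint doubled operator $\C D_t$. I would begin by recording what is already available: by the proof of Lemma \ref{l:clopat} (and as noted at the start of the proof of Lemma \ref{l:resideI}) we have $\T{Dom}(\C D) \su \T{Dom}(\C D_t)$ together with $\C D_t \xi = (\C D + t \cd A)\xi$ for all $\xi \in \T{Dom}(\C D)$. Hence the only point that needs a genuine argument is the reverse inclusion $\T{Dom}(\C D_t) \su \T{Dom}(\C D)$; everything else will follow by reading off block entries.

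For that inclusion I would invoke the resolvent identity of Lemma \ref{l:resideI}, taking $\la = 1$ and using the second of the two equalities there, which rearranges to
\[
(i + \C D_t)^{-1} = (i + \C D)^{-1} \cd (1 - t \cd X_t).
\]
Since $X_t \in \sL(\C H \op \C H)$ by Assumption \ref{a:patrelbouI}, the factor $1 - t \cd X_t$ is a bounded operator on $\C H \op \C H$, so $\T{Ran}\big((i + \C D_t)^{-1}\big) \su \T{Ran}\big((i + \C D)^{-1}\big)$. Because $\C D_t$ and $\C D$ are both selfadjoint, these two ranges are precisely $\T{Dom}(\C D_t)$ and $\T{Dom}(\C D)$. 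This gives the missing inclusion, hence $\T{Dom}(\C D_t) = \T{Dom}(\C D)$, and therefore $\C D_t = \C D + t \cd A$ as unbounded operators.

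To finish, I would write this operator identity in block form on $\T{Dom}(\C D^+) \op \T{Dom}(\C D^-)$. Comparing the $(\xi,0)$-components shows $\T{Dom}(\C D_t^+) = \T{Dom}(\C D^+)$ with $\C D_t^+ = \C D^+ + t \cd A^+$ there; since $\C D_t^+$ is closed by construction, this proves $\C D^+ + t \cd A^+ : \T{Dom}(\C D^+) \to \C H$ is closed. Comparing the $(0,\eta)$-components shows $\C D_t^- = \C D^- + t \cd A^-$ on $\T{Dom}(\C D^-)$, and combining this with the defining relation $\C D_t^- = (\C D_t^+)^*$ yields $(\C D^+ + t \cd A^+)^* = \C D^- + t \cd A^- : \T{Dom}(\C D^-) \to \C H$, the remaining assertion.

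The main (and essentially only) delicate point is the range-inclusion step: one must be careful that it is the selfadjointness of $\C D_t$ — valid because $\C D_t^+$ is a closed densely defined operator with $\C D_t^- = (\C D_t^+)^*$ — which identifies $\T{Dom}(\C D_t)$ with $\T{Ran}\big((i + \C D_t)^{-1}\big)$. I expect no smallness hypothesis on $t \, \| X_t \|$ to be needed here, since only the boundedness of $1 - t \cd X_t$ is used, not its invertibility.
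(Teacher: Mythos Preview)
Your proof is correct and follows essentially the same approach as the paper: both reduce the statement to the domain equality $\T{Dom}(\C D_t) = \T{Dom}(\C D)$ and obtain the nontrivial inclusion $\T{Dom}(\C D_t) \su \T{Dom}(\C D)$ from the second resolvent identity in Lemma \ref{l:resideI}. The only cosmetic difference is that you phrase the argument as an operator factorization $(i + \C D_t)^{-1} = (i + \C D)^{-1}(1 - t X_t)$ and read off the range inclusion, whereas the paper applies the same identity to an arbitrary $\xi \in \T{Dom}(\C D_t)$ written as $(i + \C D_t)^{-1}(i + \C D_t)\xi$; the content is identical.
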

\begin{proof}
Let $t \in [0,1]$. It is enough to show that $\T{Dom}(\C D) = \T{Dom}(\C D_t)$. 
By the proof of Lemma \ref{l:clopat} we have that $\T{Dom}(\C D) \su \T{Dom}(\C D_t)$.
Thus, let $\xi \in \T{Dom}(\C D_t)$. By Lemma \ref{l:resideI} we have that
\begin{equation}\label{eq:dompat}
\xi = (i + \C D_t)^{-1} \cd (i + \C D_t)(\xi) = (i + \C D)^{-1} \cd (i + \C D_t)(\xi)
- t \cd (i + \C D)^{-1} \cd X_t(\xi)
\end{equation}
This shows that $\xi \in \T{Dom}(\C D)$ since both of the terms in \eqref{eq:dompat} lie in $\T{Dom}(\C D)$.
\end{proof}

The resolvent identity in Lemma \ref{l:resideI} can now be extended to the whole path:

\begin{lemma}\label{l:resideII}
We have the identity
\[
(i \cd \la^{1/2} + \C D_t)^{-1} - (i \cd \la^{1/2} + \C D_s)^{-1} 
= (s - t) \cd (i \cd \la^{1/2} + \C D_t)^{-1} \cd X_s^\la
\]
for all $\la > 0$ and all $t,s \in [0,1]$.
\end{lemma}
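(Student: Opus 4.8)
The plan is to deduce this from the elementary second resolvent identity, once we know that the operators $\C D_t$, $t \in [0,1]$, all share a common domain.

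First I would invoke Proposition \ref{p:dompat}: under Assumption \ref{a:patrelbouI} one has $\T{Dom}(\C D_t) = \T{Dom}(\C D)$ for every $t \in [0,1]$, and on this common domain $\C D_t$ acts as $\C D + t \cd A$. Consequently, for $t,s \in [0,1]$ the difference $\C D_t - \C D_s$ is the restriction of the selfadjoint operator $(t-s) \cd A$ to $\T{Dom}(\C D)$, and this restriction is everywhere defined on $\T{Dom}(\C D)$ because $\T{Dom}(\C D) = \T{Dom}(\C D_0) \su \T{Dom}(A)$ by the Lemma preceding the definition of $X_t^\la$.

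Next I would fix $\la > 0$ and abbreviate $A_t := i \cd \la^{1/2} + \C D_t$, which is boundedly invertible since $\C D_t$ is selfadjoint and $\la > 0$. Because $\T{Ran}(A_s^{-1}) = \T{Dom}(\C D_s) = \T{Dom}(\C D) \su \T{Dom}(A)$, the product $A \cd A_s^{-1}$ is everywhere defined and, by definition, equals the bounded operator $X_s^\la$. Using the common domain from the previous step I would then compute
\[
A_t^{-1} - A_s^{-1} = A_t^{-1} \big( A_s - A_t \big) A_s^{-1} = A_t^{-1} \big( (s-t) \cd A \big) A_s^{-1} = (s-t) \cd A_t^{-1} \cd X_s^\la ,
\]
where the first equality is the algebraic identity $A_t^{-1} - A_s^{-1} = A_t^{-1}(A_s A_s^{-1}) - (A_t^{-1} A_t) A_s^{-1}$, legitimate since $\T{Ran}(A_s^{-1}) = \T{Dom}(\C D) \su \T{Dom}(\C D_t)$. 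This is exactly the asserted identity.

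The only delicate point is the domain bookkeeping in the displayed chain: one must check that $A_s^{-1}$ maps into $\T{Dom}(A) \cap \T{Dom}(\C D_t)$, so that $A_s - A_t$ and then $A_t^{-1}$ can be applied in succession and produce a bounded operator. All of this is supplied by Proposition \ref{p:dompat} together with Assumption \ref{a:patrelbouI}, so I do not anticipate a real obstacle. One could alternatively iterate Lemma \ref{l:resideI} for the pairs $(\C D,\C D_t)$ and $(\C D,\C D_s)$ and rearrange, but the direct route above is more transparent.
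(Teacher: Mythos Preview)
Your proof is correct and follows essentially the same approach as the paper: invoke Proposition~\ref{p:dompat} to obtain the common domain $\T{Dom}(\C D_t) = \T{Dom}(\C D_s) = \T{Dom}(\C D)$, observe that $(\C D_s - \C D_t)\xi = (s-t)\,A\xi$ there, and then apply the second resolvent identity. The paper merely says ``the result now follows by a straightforward computation,'' so your version is in fact more explicit about the domain bookkeeping.
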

\begin{proof}
Let $t,s \in [0,1]$ and let $\la > 0$. By Proposition \ref{p:dompat}, we have that $\T{Dom}(\C D) = \T{Dom}(\C D_s) = \T{Dom}(\C D_t)$. Furthermore, we have the identity $(\C D_s - \C D_t)(\xi) = (s - t) \cd A(\xi)$ for all $\xi \in \T{Dom}(\C D)$. The result of the Lemma now follows by a straightforward computation.
\end{proof}

As an easy consequence of the resolvent identity in Lemma \ref{l:resideII} we obtain the following Lemma. The proof is left to the reader.

\begin{lemma}\label{l:resideIII}
We have the identities
\[
\begin{split}
X_t^\la - X_s^\la & = (s-t) \cd X_t^\la \cd X_s^\la \q \T{and} \\
R_t^\la - R_s^\la & = (s-t) \cd \big( R_t^\la \cd X_s^\la + (X_t^\la)^* \cd R_s^\la \big)
\end{split}
\]
for all $\la > 0$ and all $t,s \in [0,1]$.
\end{lemma}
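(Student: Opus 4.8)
The plan is to obtain both identities as direct algebraic consequences of the resolvent identity in Lemma \ref{l:resideII} together with the selfadjointness of $\C D_t$. Write $G_t := (i \cd \la^{1/2} + \C D_t)^{-1}$ for $t \in [0,1]$ and $\la > 0$, so that $X_t^\la = A \cd G_t$, and, since $\la + \C D_t^2 = (i \cd \la^{1/2} + \C D_t) \cd (-i \cd \la^{1/2} + \C D_t)$ while $(-i \cd \la^{1/2} + \C D_t)^{-1} = G_t^*$ by selfadjointness of $\C D_t$, also $R_t^\la = G_t^* \cd G_t = G_t \cd G_t^*$.

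For the first identity I would simply multiply the resolvent identity $G_t - G_s = (s-t) \cd G_t \cd X_s^\la$ of Lemma \ref{l:resideII} on the left by the unbounded operator $A$. This step is legitimate because, by Proposition \ref{p:dompat} and the domain lemma just preceding Assumption \ref{a:patrelbouI}, the range of each $G_t$ lies in $\T{Dom}(\C D_t) = \T{Dom}(\C D) \su \T{Dom}(A)$, so that $A \cd G_t$ is everywhere defined and coincides with the bounded operator $X_t^\la$. Left multiplication then gives exactly $X_t^\la - X_s^\la = (s-t) \cd X_t^\la \cd X_s^\la$.

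For the second identity I would first observe that interchanging $t$ and $s$ in Lemma \ref{l:resideII} and negating yields the companion form $G_t - G_s = (s-t) \cd G_s \cd X_t^\la$, whose adjoint reads $G_t^* - G_s^* = (s-t) \cd (X_t^\la)^* \cd G_s^*$. Then I would telescope
\[
R_t^\la - R_s^\la = G_t^* \cd (G_t - G_s) + (G_t^* - G_s^*) \cd G_s,
\]
substitute $G_t - G_s = (s-t) \cd G_t \cd X_s^\la$ into the first summand to obtain $(s-t) \cd R_t^\la \cd X_s^\la$, and substitute the adjoint companion form into the second summand to obtain $(s-t) \cd (X_t^\la)^* \cd R_s^\la$. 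Summing these two contributions produces the asserted identity $R_t^\la - R_s^\la = (s-t) \cd \big( R_t^\la \cd X_s^\la + (X_t^\la)^* \cd R_s^\la \big)$.

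The only genuinely delicate point is the justification, in the first step, of left multiplication by the unbounded operator $A$, together with the accompanying domain bookkeeping; once the domain equality $\T{Dom}(\C D_t) = \T{Dom}(\C D) \su \T{Dom}(A)$ furnished by Proposition \ref{p:dompat} is in hand, the rest is bounded-operator algebra with resolvents and their adjoints and presents no further obstacle.
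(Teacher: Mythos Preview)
Your proof is correct and is precisely what the paper intends: the paper leaves this proof to the reader, indicating only that it is ``an easy consequence of the resolvent identity in Lemma \ref{l:resideII}'', which is exactly your starting point. The one minor slip is a misplaced reference --- the unnamed domain lemma $\T{Dom}(\C D_t) \su \T{Dom}(A)$ sits just \emph{after} Assumption \ref{a:patrelbouI}, not before it.
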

%Let $\la > 0$ and let $t,s \in [0,1]$.
%
%The first identity is an immediate consequence of Lemma 
%
%To prove the second identity, we compute as follows:
%\[
%\begin{split}
%R_t^\la - R_s^\la & = (-i \cd \la^{1/2} + \C D_t)^{-1} \cd (i \cd \la^{1/2} + \C D_t)^{-1} -
%(-i \cd \la^{1/2} + \C D_s)^{-1} \cd (i \cd \la^{1/2} + \C D_s)^{-1} \\
%& = (s-t) \cd (-i \cd \la^{1/2} + \C D_t)^{-1} \cd (i \cd \la^{1/2} + \C D_t)^{-1} \cd X_s^\la \\
%& \qq + (s-t) \cd (X_t^\la)^* \cd (- i \cd \la^{1/2} + \C D_s)^{-1} \cd (i \cd \la^{1/2} + \C D_s)^{-1} \\
%& = (s - t) \cd \big( R_t^\la \cd X_s^\la + (X_t^\la)^* \cd R_s^\la \big)
%\end{split}
%\]
%where the second identity is entailed by another application of Lemma \ref{l:resideI}.

\emph{For the rest of this Appendix we will need an extra assumption on our data:}

\begin{assu}\label{a:patrelbouII}
Suppose that $A^+ : \T{Dom}(A^+) \to \C H$ and $\C D^+ : \T{Dom}(\C D^+)$ satisfy the conditions of Assumption \ref{a:patrelbouI}. Suppose furthermore that $\sup_{t \in [0,1]}\| X_t \| < \infty$.
\end{assu}

 Lemma \ref{l:resideIII} now entails the following result though we remark that our extra assumption on the uniform boundedness of the path $t \mapsto X_t$ is needed here.
\begin{lemma}\label{l:derres}
Let $\la > 0$. The paths $t \mapsto R_t^\la$ and $t \mapsto X_t^\la$ are continuously differentiable in operator norm. The derivatives are given by
\[
\frac{d (X_t^\la)}{dt}\Big|_{t_0} = - (X_{t_0}^\la)^2 \q \T{and} \q
\frac{d(R_t^\la)}{dt}\Big|_{t_0} = - R_{t_0}^\la \cd X_{t_0}^\la - (X_{t_0}^\la)^* \cd R_{t_0}^\la
\]
for all $t_0 \in [0,1]$.
\end{lemma}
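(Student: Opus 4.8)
The plan is to read off everything from the two resolvent identities recorded in Lemma~\ref{l:resideIII} together with the uniform boundedness hypothesis of Assumption~\ref{a:patrelbouII}. Fix $\la > 0$. The first thing I would establish is a uniform bound $\sup_{t \in [0,1]} \| X_t^\la \| =: C_\la < \infty$. Since $\C D_t$ is selfadjoint, $\|(i\la^{1/2} + \C D_t)^{-1}\| \leq \la^{-1/2}$ and $\|R_t^\la\| \leq \la^{-1}$ hold directly, so only $X_t^\la$ requires work. For this I would write
\[
X_t^\la = A \cd (i + \C D_t)^{-1} \cd (i + \C D_t)(i\la^{1/2} + \C D_t)^{-1} = X_t \cd \big( 1 + (i - i\la^{1/2})(i\la^{1/2} + \C D_t)^{-1} \big),
\]
which is legitimate because $(i\la^{1/2} + \C D_t)^{-1}$ maps $\C H \op \C H$ into $\T{Dom}(\C D_t) = \T{Dom}(\C D) \su \T{Dom}(A)$ (using Proposition~\ref{p:dompat} and Assumption~\ref{a:patrelbouI}(1)). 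Combining $\sup_t \|X_t\| < \infty$ with $\|(i\la^{1/2} + \C D_t)^{-1}\| \leq \la^{-1/2}$ then yields the bound $C_\la$.

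With $C_\la$ in hand, the first identity of Lemma~\ref{l:resideIII} gives $\|X_t^\la - X_s^\la\| \leq |t-s|\,C_\la^2$ and the second gives $\|R_t^\la - R_s^\la\| \leq 2\la^{-1}C_\la\,|t-s|$, so both $t \mapsto X_t^\la$ and $t \mapsto R_t^\la$ are Lipschitz, hence operator-norm continuous; and $t \mapsto (X_t^\la)^*$ is continuous because $*$ is an isometry on $\sL(\C H \op \C H)$. To compute the derivatives I would fix $t_0$ and divide the identities of Lemma~\ref{l:resideIII} by $t - t_0$, obtaining, for $t \ne t_0$,
\[
\frac{X_t^\la - X_{t_0}^\la}{t - t_0} = - X_t^\la \cd X_{t_0}^\la, \qquad
\frac{R_t^\la - R_{t_0}^\la}{t - t_0} = - R_t^\la \cd X_{t_0}^\la - (X_t^\la)^* \cd R_{t_0}^\la .
\]
Letting $t \to t_0$ and using the continuity just established together with joint norm-continuity of multiplication, the right-hand sides converge to $-(X_{t_0}^\la)^2$ and $-R_{t_0}^\la X_{t_0}^\la - (X_{t_0}^\la)^* R_{t_0}^\la$, which are the asserted formulas. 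That $t_0 \mapsto -(X_{t_0}^\la)^2$ and $t_0 \mapsto -R_{t_0}^\la X_{t_0}^\la - (X_{t_0}^\la)^* R_{t_0}^\la$ are again operator-norm continuous follows at once from continuity of $t \mapsto X_t^\la$ and $t \mapsto R_t^\la$, so both paths are continuously differentiable.

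The only step that is not purely formal is the uniform bound $\sup_t \|X_t^\la\| < \infty$ in the first paragraph; this is precisely the point where Assumption~\ref{a:patrelbouII} enters, as the remark preceding the Lemma signals, and without it the difference quotients in the last display could not be controlled uniformly as $t \to t_0$. Everything after that bound is a routine manipulation of the resolvent identities and of the continuity of multiplication and involution on $\sL(\C H \op \C H)$.
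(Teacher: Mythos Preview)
Your proof is correct and follows exactly the approach the paper indicates: the paper merely states that the lemma is entailed by Lemma~\ref{l:resideIII} together with the uniform boundedness from Assumption~\ref{a:patrelbouII}, and you have faithfully supplied the details of that deduction. In particular, your derivation of the uniform bound $\sup_t\|X_t^\la\|<\infty$ from $\sup_t\|X_t\|<\infty$ and the subsequent passage to the limit in the difference quotients are precisely what the paper leaves implicit.
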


The next result is now a consequence of the Leibniz rule:

\begin{prop}\label{p:derpowres}
Let $\la > 0$ and let $n \in \nn$. The path $t \mapsto (R_t^\la)^n$ is continuously differentiable in operator norm and the derivative is given by
\[
\frac{d\big( (R_t^\la)^n\big)}{dt}\Big|_{t_0} = -\sum_{j = 0}^{n-1} (R_{t_0}^\la)^j \cd 
\big( R_{t_0}^\la \cd X_{t_0}^\la + (X_{t_0}^\la)^* \cd R_{t_0}^\la \big) \cd (R_{t_0}^\la)^{n-1-j}
\]
for all $t_0 \in [0,1]$.
\end{prop}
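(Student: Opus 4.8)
The plan is to proceed by induction on $n \in \nn$. The base case $n = 1$ is exactly Lemma \ref{l:derres}, which already records that $t \mapsto R_t^\la$ is continuously differentiable in operator norm with derivative $-R_{t_0}^\la \cd X_{t_0}^\la - (X_{t_0}^\la)^* \cd R_{t_0}^\la$ at $t_0 \in [0,1]$. The inductive step will be a routine application of the Leibniz rule for products of operator-norm differentiable paths into the Banach algebra $\sL(\C H \op \C H)$.

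Concretely, I would assume the formula holds for some $n \in \nn$ and write $(R_t^\la)^{n+1} = (R_t^\la)^n \cd R_t^\la$. Both factors are continuously differentiable in operator norm — the first by the induction hypothesis, the second by Lemma \ref{l:derres} — and since multiplication $\sL(\C H \op \C H) \ti \sL(\C H \op \C H) \to \sL(\C H \op \C H)$ is bounded bilinear, the product is again continuously differentiable in operator norm with
\[
\frac{d\big( (R_t^\la)^{n+1}\big)}{dt}\Big|_{t_0} = \frac{d\big( (R_t^\la)^{n}\big)}{dt}\Big|_{t_0} \cd R_{t_0}^\la + (R_{t_0}^\la)^n \cd \frac{d (R_t^\la)}{dt}\Big|_{t_0}.
\]
Substituting the induction hypothesis into the first term and Lemma \ref{l:derres} into the second, and abbreviating $C_{t_0} := R_{t_0}^\la \cd X_{t_0}^\la + (X_{t_0}^\la)^* \cd R_{t_0}^\la$, the right-hand side becomes $-\sum_{j=0}^{n-1} (R_{t_0}^\la)^j \cd C_{t_0} \cd (R_{t_0}^\la)^{n-j} - (R_{t_0}^\la)^n \cd C_{t_0}$, and the final term is precisely the missing $j = n$ summand, so the whole expression collapses to $-\sum_{j=0}^{n} (R_{t_0}^\la)^j \cd C_{t_0} \cd (R_{t_0}^\la)^{n-j}$. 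This is the asserted formula in degree $n+1$, completing the induction.

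Finally, I would note that the derivative so obtained is continuous in operator norm: each summand is a product of the operator-norm continuous paths $t \mapsto R_t^\la$, $t \mapsto X_t^\la$ and $t \mapsto (X_t^\la)^*$, and these are continuous by Lemma \ref{l:derres} (the adjoint being an isometry). There is no genuine obstacle here; the only points worth making explicit are the validity of the Leibniz rule for differentiable maps into a Banach algebra and the telescoping reindexing of the sum, both of which are standard.
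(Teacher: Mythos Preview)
Your proposal is correct and matches the paper's own approach, which simply records that the result is a consequence of the Leibniz rule applied to Lemma \ref{l:derres}. Your write-up just spells out the induction and reindexing that the paper leaves implicit.
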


For each $t \in [0,1]$ we let
$
T_t := \C D_t \cd \big(1 + \C D_t^2 \big)^{-1/2}
$
denote the bounded transform of $\C D_t : \T{Dom}(\C D) \to \C H$.

\begin{prop}\label{p:derboutra}
Suppose that $A^+ : \T{Dom}(A^+) \to \C H$ and $\C D^+ : \T{Dom}(\C D^+)$ satisfy the conditions of Assumption \ref{a:patrelbouII}. Then the path $t \mapsto T_t \cd (i + \C D)^{-1}$ is continuously differentiable and the derivative is given by
\[
\begin{split}
& \frac{d\big(T_t \cd (i + \C D)^{-1}\big)}{dt}\Big|_{t_0} \\ 
& \q = (1 + \C D_{t_0}^2)^{-1/2} \cd X_0 \\
& \qqq - \frac{1}{\pi} \cd \int_0^\infty \mu^{-1/2} \cd 
\big( R_{t_0}^{1 + \mu} \cd X_{t_0}^{1 + \mu} + (X_{t_0}^{1 + \mu})^* \cd R_{t_0}^{1 + \mu} \big)
\, d\mu \cd \C D_{t_0} \cd (i + \C D)^{-1}
\end{split}
\]
for all $t_0 \in [0,1]$, where the integral converges absolutely in operator norm.
\end{prop}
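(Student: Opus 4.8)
The plan is to reduce the statement to differentiating the bounded operator family $t \mapsto (1 + \C D_t^2)^{-1/2}$ in operator norm. First I would record the algebraic identity
\[
T_t \cd (i + \C D)^{-1} = (1 + \C D_t^2)^{-1/2} \cd \C D_t \cd (i + \C D)^{-1},
\]
which is legitimate because $(i + \C D)^{-1}$ has range inside $\T{Dom}(\C D) = \T{Dom}(\C D_t)$ by Proposition \ref{p:dompat}, and $\C D_t$ commutes strongly with its bounded function $(1 + \C D_t^2)^{-1/2}$. Next, since $\C D_t = \C D + t \cd A$ on $\T{Dom}(\C D)$ and $\T{Dom}(\C D) \su \T{Dom}(A)$, I would rewrite
\[
\C D_t \cd (i + \C D)^{-1} = \C D \cd (i + \C D)^{-1} + t \cd A \cd (i + \C D)^{-1} = 1 - i(i + \C D)^{-1} + t \cd X_0 ,
\]
which is an \emph{affine} function of $t$ with values in $\sL(\C H \op \C H)$; in particular it is continuously differentiable with derivative the constant operator $X_0$. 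Thus $t \mapsto T_t(i+\C D)^{-1}$ is the product of this affine path with $t \mapsto (1+\C D_t^2)^{-1/2}$, and once the latter is shown to be continuously differentiable in operator norm the Leibniz rule gives
\[
\frac{d\big(T_t(i+\C D)^{-1}\big)}{dt}\Big|_{t_0} = \frac{d\big((1+\C D_t^2)^{-1/2}\big)}{dt}\Big|_{t_0} \cd \C D_{t_0}(i+\C D)^{-1} + (1+\C D_{t_0}^2)^{-1/2} \cd X_0 ,
\]
which already exhibits the first summand of the claimed formula.

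It then remains to differentiate $f(t) := (1 + \C D_t^2)^{-1/2}$ and to identify $f'(t_0)$. For this I would use the integral representation
\[
f(t) = \frac{1}{\pi} \int_0^\infty \mu^{-1/2} (1 + \mu + \C D_t^2)^{-1} \, d\mu = \frac{1}{\pi}\int_0^\infty \mu^{-1/2} R_t^{1 + \mu} \, d\mu ,
\]
which converges absolutely in operator norm because $\| R_t^{1+\mu} \| \leq (1+\mu)^{-1}$ and $\int_0^\infty \mu^{-1/2}(1+\mu)^{-1}\,d\mu = \pi$. By Lemma \ref{l:derres} each integrand $t \mapsto \mu^{-1/2} R_t^{1+\mu}$ is continuously differentiable in operator norm with derivative $-\mu^{-1/2}\big(R_t^{1+\mu} X_t^{1+\mu} + (X_t^{1+\mu})^* R_t^{1+\mu}\big)$. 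To differentiate under the integral sign I need a $\mu$-integrable majorant uniform in $t$, and for this I would establish that $\sup_{t \in [0,1]} \| X_t^{1+\mu} \| \leq K$ for every $\mu \geq 0$, where $K := \sup_{t}\|X_t\|$ is finite by Assumption \ref{a:patrelbouII}: writing $X_t^{\la} = X_t \cd (i + \C D_t)(i\la^{1/2} + \C D_t)^{-1}$ and using the spectral theorem one has $\|(i+\C D_t)(i\la^{1/2}+\C D_t)^{-1}\| = \sup_{x \in \rr}\big((1+x^2)/(\la + x^2)\big)^{1/2} \leq 1$ for $\la \geq 1$. Together with $\|R_t^{1+\mu}\|\leq (1+\mu)^{-1}$ this yields the majorant $2K \mu^{-1/2}(1+\mu)^{-1} \in L^1(0,\infty)$, so $f$ is differentiable with
\[
f'(t_0) = -\frac{1}{\pi}\int_0^\infty \mu^{-1/2}\big(R_{t_0}^{1+\mu}X_{t_0}^{1+\mu} + (X_{t_0}^{1+\mu})^* R_{t_0}^{1+\mu}\big)\, d\mu .
\]
Continuity of $t \mapsto f'(t)$ in operator norm follows by dominated convergence, using that $t \mapsto R_t^{1+\mu}X_t^{1+\mu}$ and $t\mapsto (X_t^{1+\mu})^*R_t^{1+\mu}$ are operator-norm continuous (a consequence of the resolvent identities in Lemma \ref{l:resideIII} combined with the uniform bounds) and the same majorant. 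Substituting this expression for $f'(t_0)$ into the Leibniz formula produces exactly the displayed formula, and the absolute operator-norm convergence of the remaining integral is the majorant estimate applied at $t_0$.

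I expect the main obstacle to be precisely the uniform bound $\sup_{t\in[0,1],\,\mu\geq 0}\| X_t^{1+\mu} \| < \infty$, since this single estimate licenses both the differentiation under the integral sign and the domination argument for continuity of $f'$; everything else is bookkeeping with the resolvent identities already proved in the Appendix. A secondary point requiring care is the justification of the opening identity $T_t(i+\C D)^{-1} = (1+\C D_t^2)^{-1/2}\C D_t(i+\C D)^{-1}$, which rests on the domain equality $\T{Dom}(\C D)=\T{Dom}(\C D_t)$ from Proposition \ref{p:dompat} and on strong commutativity of $\C D_t$ with its bounded functions.
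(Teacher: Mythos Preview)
Your proposal is correct and follows essentially the same route as the paper: the same factorization $T_t(i+\C D)^{-1} = (1+\C D_t^2)^{-1/2}\cd \C D_t(i+\C D)^{-1}$, the observation that the second factor is affine in $t$ with derivative $X_0$, the integral representation of $(1+\C D_t^2)^{-1/2}$, differentiation of the integrand via Lemma \ref{l:derres}, and then the Leibniz rule. The only cosmetic difference is that the paper packages the differentiation-under-the-integral step by integrating directly in the Banach algebra $C^1\big([0,1],\sL(\C H\op\C H)\big)$ (so one finiteness check covers both the function and its derivative), whereas you phrase it as dominated convergence with the explicit majorant coming from $\sup_t\|X_t^{1+\mu}\|\leq K$; the content is the same and your explicit bound on $X_t^\la$ is exactly what makes the paper's ``noting that both integrals are finite'' work.
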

\begin{proof}
For each $t \in [0,1]$ we have the identity
\[
T_t \cd (i + \C D)^{-1} = (1 + \C D_t)^{-1/2} \cd \C D_t \cd (i + \C D)^{-1}
\]

The path $t \mapsto \C D_t \cd (i + \C D)^{-1}$ is clearly continuously differentiable and the derivative is given by
\[
\frac{d\big(\C D_t \cd (i + \C D)^{-1} \big)}{dt}\big|_{t_0} = A \cd (i + \C D)^{-1} = X_0
\]
for all $t_0 \in [0,1]$.
The result of our Proposition is therefore a consequence of the next Lemma and the Leibniz rule.
\end{proof}

\begin{lemma}\label{l:dersqu}
Suppose that $A^+ : \T{Dom}(A^+) \to \C H$ and $\C D^+ : \T{Dom}(\C D^+)$ satisfy the conditions of Assumption \ref{a:patrelbouII}. Then the path
$t \mapsto (1 + \C D_t^2)^{-1/2}$ is continuously differentiable and the derivative is given by
\[
\frac{d\big( (1 + \C D_t^2)^{-1/2} \big)}{dt}\Big|_{t_0} = - \frac{1}{\pi} 
\cd \int_0^\infty \mu^{-1/2} \cd \big( R_{t_0}^{1 + \mu} \cd X_{t_0}^{1 + \mu} + (X_{t_0}^{1 + \mu})^* \cd R_{t_0}^{1 + \mu} \big) \, d\mu
\]
for all $t_0 \in [0,1]$, where the integral converges absolutely in operator norm.
\end{lemma}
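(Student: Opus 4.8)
The plan is to write the inverse square root as an absolutely convergent operator-norm integral over the resolvents $R_t^{1+\mu}$ and then to differentiate under the integral sign, reading off the $t$-derivative of each integrand from Lemma~\ref{l:derres} and controlling it by means of the uniform bound furnished by Assumption~\ref{a:patrelbouII}.

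First I would invoke the elementary functional-calculus identity $H^{-1/2} = \frac{1}{\pi}\int_0^\infty \mu^{-1/2}(H + \mu)^{-1}\, d\mu$, valid for every positive selfadjoint operator $H$ which is bounded below away from zero (reduce to the scalar identity via the spectral theorem). Applied to $H := 1 + \C D_t^2 \geq 1$, and with the notation $R_t^\la = (\la + \C D_t^2)^{-1}$ of this Appendix, this gives
\[
(1 + \C D_t^2)^{-1/2} = \frac{1}{\pi}\int_0^\infty \mu^{-1/2} \cd R_t^{1 + \mu}\, d\mu ,
\]
where the integral converges absolutely in operator norm since $\|R_t^{1 + \mu}\| \leq (1 + \mu)^{-1}$ and $\mu \mapsto \mu^{-1/2}(1 + \mu)^{-1}$ is integrable on $(0,\infty)$.

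The key preliminary estimate is a bound on $\|X_t^{1 + \mu}\|$ that is uniform in \emph{both} $t \in [0,1]$ and $\mu \geq 0$. Writing $X_t^{1 + \mu} = X_t \cd (i + \C D_t)(i(1 + \mu)^{1/2} + \C D_t)^{-1}$ and noting that the spectral theorem gives $\|(i + \C D_t)(i(1 + \mu)^{1/2} + \C D_t)^{-1}\| \leq 1$ for all $\mu \geq 0$, Assumption~\ref{a:patrelbouII} produces a constant $K > 0$ with $\|X_t^{1 + \mu}\| \leq K$ for all $t \in [0,1]$ and all $\mu \geq 0$. Now fix $\mu > 0$. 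By Lemma~\ref{l:derres} (applied with $\la = 1 + \mu$), the map $t \mapsto \mu^{-1/2} R_t^{1 + \mu}$ is continuously differentiable in operator norm, with derivative $-\mu^{-1/2}\big( R_{t_0}^{1 + \mu} X_{t_0}^{1 + \mu} + (X_{t_0}^{1 + \mu})^* R_{t_0}^{1 + \mu}\big)$ at $t_0$. Combining the uniform bound on $X_t^{1 + \mu}$ with $\|R_t^{1 + \mu}\| \leq (1 + \mu)^{-1}$, these derivatives are dominated, uniformly over $t \in [0,1]$, by $2 K \mu^{-1/2}(1 + \mu)^{-1}$, which is integrable on $(0,\infty)$. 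The standard criterion for differentiating a Banach-space-valued integral under the integral sign therefore applies and yields the stated formula, the integral on the right-hand side converging absolutely in operator norm by the same bound.

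It remains to verify that $t_0 \mapsto \frac{d}{dt}\big((1 + \C D_t^2)^{-1/2}\big)\big|_{t_0}$ is operator-norm continuous. For this I would use dominated convergence once more: for each fixed $\mu > 0$ the integrand $\mu^{-1/2}\big( R_{t_0}^{1 + \mu} X_{t_0}^{1 + \mu} + (X_{t_0}^{1 + \mu})^* R_{t_0}^{1 + \mu}\big)$ depends continuously on $t_0$ in operator norm, which follows from the operator-norm continuity of $t \mapsto R_t^\la$ and $t \mapsto X_t^\la$ (itself a consequence of the resolvent identities of Lemma~\ref{l:resideIII} together with the uniform bound), and all these integrands are dominated by the fixed integrable function $2 K \mu^{-1/2}(1 + \mu)^{-1}$. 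The main obstacle in the whole argument is precisely the uniform-in-$(\mu,t)$ control of $\|X_t^{1 + \mu}\|$: without the hypothesis $\sup_{t \in [0,1]}\|X_t\| < \infty$ of Assumption~\ref{a:patrelbouII} there would be no dominating function working simultaneously for all $t$ near $t_0$, and the large-$\mu$ tail of the integral could invalidate both the differentiation under the integral sign and the continuity of the resulting derivative.
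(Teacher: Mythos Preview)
Your proof is correct and follows essentially the same route as the paper: both use the integral representation $(1+\C D_t^2)^{-1/2}=\frac{1}{\pi}\int_0^\infty \mu^{-1/2}R_t^{1+\mu}\,d\mu$, invoke Lemma~\ref{l:derres} for the pointwise derivative, and justify the interchange via the uniform bound coming from Assumption~\ref{a:patrelbouII}. The only cosmetic difference is that the paper packages the two steps (differentiability and continuity of the derivative) into a single absolute-convergence statement in the Banach algebra $C^1\big([0,1],\sL(\C H\oplus\C H)\big)$, whereas you argue them separately via differentiation under the integral sign and dominated convergence; the underlying estimate $\|X_t^{1+\mu}\|\leq K$ is the same in both.
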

\begin{proof}
For each $t \in [0,1]$, we may express the bounded operator $(1 + \C D_t^2)^{-1/2}$ by an integral formula,
\[
(1 + \C D_t^2)^{-1/2} 
= \frac{1}{\pi} \cd \int_0^\infty \mu^{-1/2} \cd (1 + \mu + \C D_t^2)^{-1} \, d\mu
= \frac{1}{\pi} \cd \int_0^\infty \mu^{-1/2} \cd R_t^{1 + \mu} \, d\mu
\]
where the integral converges absolutely in operator norm.

Let us apply the notation for the unital algebra $C^1\big( [0,1], \sL(\C H \op \C H) \big)$ consisting of all maps $[0,1] \to \sL(\C H \op \C H)$ which are continuously differentiable in operator norm. This unital algebra becomes a Banach algebra when equipped with the norm
\[
\| \cd \|_1 : C^1\big( [0,1], \sL(\C H \op \C H) \big) \to [0,\infty)
\q \| f \|_1 := \sup_{t_0 \in [0,1]}\| f(t_0) \| + \sup_{t_0 \in [0,1]} \| \frac{df}{dt}\big|_{t_0} \|
\]

Recall now from Lemma \ref{l:derres} that the map $t \mapsto R_t^{1 + \mu}$ lies in $C^1\big( [0,1], \sL(\C H \op \C H) \big)$ for all $\mu \in [0,\infty)$. Furthermore, we have that 
\[
\frac{d R_t^{1 + \mu}}{dt}\Big|_{t_0} = - R_{t_0}^{1 + \mu} \cd X_{t_0}^{1 + \mu} - (X_{t_0}^{1 + \mu})^* \cd R_{t_0}^{1 + \mu}
\]
for all $t_0 \in [0,1]$.

The result of the present Lemma therefore follows by noting that both of the integrals
\[
\begin{split}
& \int_0^\infty \mu^{-1/2} \cd \sup_{t_0 \in [0,1]} \| R_{t_0}^{1 + \mu} \| \, d\mu \q \T{and} \\
& \int_0^\infty \mu^{-1/2} \cd \sup_{t_0 \in [0,1]} \| R_{t_0}^{1 + \mu} \cd X_{t_0}^{1 + \mu} + (X_{t_0}^{1 + \mu})^* \cd R_{t_0}^{1 + \mu}\| \, d\mu
\end{split}
\]
are finite.
\end{proof}

%\bibliography{JK.bib}
\newcommand{\etalchar}[1]{$^{#1}$}
\def\cprime{$'$}
\providecommand{\bysame}{\leavevmode\hbox to3em{\hrulefill}\thinspace}
\providecommand{\MR}{\relax\ifhmode\unskip\space\fi MR }
% \MRhref is called by the amsart/book/proc definition of \MR.
\providecommand{\MRhref}[2]{%
  \href{http://www.ams.org/mathscinet-getitem?mr=#1}{#2}
}
\providecommand{\href}[2]{#2}

\bibliographystyle{amsalpha-lmp}

\end{document}